\documentclass[a4paper, 10pt, reqno, final]{amsart}

\usepackage{ifdraft}
\usepackage[T1]{fontenc}
\usepackage[utf8]{inputenc}
\usepackage[english]{babel}
\usepackage{amsfonts,amssymb,amsmath,amsthm}
\usepackage{etoolbox}
\usepackage{paralist}
\usepackage{subfigure}

\usepackage[obeyDraft]{todonotes}

\usepackage[top=3cm, bottom=3cm, left=2.5cm, right=2.5cm]{geometry}
\setlength{\headheight}{12.0pt}
\pdfimageresolution=72

\usepackage{varioref}
\usepackage{hyperref}
\usepackage{cleveref}

  \theoremstyle{definition} 
  \newtheorem{defi}{Definition}[section]
	\crefname{defi}{definition}{definitions}
	
  \newtheorem{bem}[defi]{Remark}
	\crefname{bem}{remark}{remarks}
	
  \newtheorem{bsp}[defi]{Example}
  \newtheorem{examples}[defi]{Examples}

\theoremstyle{plain} 
  \newtheorem{lem}[defi]{Lemma}
	\crefname{lem}{lemma}{lemmata}
	
  \newtheorem{thm}[defi]{Theorem}
	\crefname{thm}{theorem}{theorems}
	
  \newtheorem{cor}[defi]{Corollary}
	\crefname{cor}{corollary}{corollaries}
	
	\newtheorem{prop}[defi]{Proposition}
	\crefname{prop}{proposition}{propositions}

	\crefname{propdef}{proposition}{propositions}
	

\newcommand{\N}{\mathbb{N}}

\newcommand{\R}{\mathbb{R}}

\renewcommand{\H}{\mathbb{H}}
\renewcommand{\S}{\mathbb{S}}

\newcommand{\spn}[1]{\langle#1\rangle}
\newcommand{\jac}[2]{\operatorname{#1}_{#2}}
\newcommand{\Pii}[3]{\Pi\!\left(#1;\left.#3\right|#2\right)}
\newcommand{\sgn}{\operatorname{sgn}}

\newcommand{\ro}[2]{\rho_{#2}\!\left(#1\right)}
\newcommand{\FAC}{\Xi}

\newcommand{\f}{\mathfrak{f}}
\renewcommand{\t}{\mathfrak{t}}
\renewcommand{\v}{\mathfrak{v}}
\newcommand{\e}{\mathfrak{e}}
\newcommand{\p}{\mathfrak{p}}
\newcommand{\q}{\mathfrak{q}}
\newcommand{\s}{\mathfrak{s}}
\newcommand{\m}{\mathfrak{m}}
\newcommand{\x}{\mathfrak{x}}

\renewcommand{\c}{\mathfrak{c}}

\newcommand{\Q}{\mathfrak{R}_{\p, \q}}
\newcommand{\T}{\mathfrak{T}_{\p, \q}}

\renewcommand{\L}{\mathcal{L}}
\renewcommand{\pounds}{\Lambda}


\begin{document}
\title{Channel linear Weingarten surfaces in space forms}
\author{Udo Hertrich-Jeromin}
\address{Institute of Discrete Mathematics and Geometry\\
         TU Wien \\
         Wiedner Hauptstra{\ss}e 8-10/104 \\
         1040 Wien, Austria}
\email{udo.hertrich-jeromin@tuwien.ac.at}
\author{Mason Pember}
\address{Department of Mathematical Sciences\\
         University of Bath\\
         Bath \\
         BA2 7AY, UK}
\email{mason.j.w.pember@bath.edu}
\author{Denis Polly}
\address{Institute of Discrete Mathematics and Geometry\\
	TU Wien \\
	Wiedner Hauptstra{\ss}e 8-10/104 \\
	1040 Wien, Austria}
\email{dpolly@geometrie.tuwien.ac.at}
\date{\today}


\begin{abstract}
Channel linear Weingarten surfaces in space forms
are investigated in a Lie sphere geometric setting,
which allows for a uniform treatment of different
ambient geometries.
We show that any channel linear Weingarten surface in
a space form is isothermic and, in particular, a surface
of revolution in its ambient space form.

We obtain explicit parametrisations for channel surfaces
of constant Gauss curvature in space forms,
and thereby for a large class of linear Weingarten
surfaces up to parallel transformation.
\end{abstract}

\subjclass[2020]{53A10 (primary); 53A40, 53C42, 37K35, 37K25 (secondary)}

\keywords{Lie sphere geometry; linear Weingarten surface; channel surface; isothermic surface; isothermic sphere congruence; Omega surface; constant Gauss curvature; Jacobi elliptic function}

\maketitle

\section{Introduction}
Two different ways to define linear Weingarten surfaces can be found
in the literature, either by requiring an affine relationship between the
principal curvatures or one between the mean and the Gau{ss} curvature. In 
these notes, we adopt the second definition: 
a surface in a space form is called linear Weingarten if, for some 
non-trivial triple $a, b, c\in \R$, its Gauss and mean curvatures
$K$ and $H$ satisfy
\begin{align*}
	aK+2bH+c=0.
\end{align*}
This includes surfaces of constant Gauss or mean curvature (CGC or CMC,
respectively) as well as their parallel surfaces. 

We investigate linear Weingarten surfaces that are additionally channel
surfaces, that is, envelop a $1$-parameter family of spheres.
An example is provided by surfaces that are invariant under a $1$-parameter 
subgroup of rotations in the given space form.
The study of such rotational linear Weingarten surfaces goes back
to Delaunay's investigations of CMC surfaces of revolution in Euclidean
space in \cite{delaunay1841},
but have again sparked interest in recent years from various points
of view, see
\cite{barros2012}, \cite{lopez2009}, \cite{lopez2009a} or most recently 
\cite{arroyo2019}, \cite{dursun2020}, \cite{pampano2020} and references 
therein. Since the considered surfaces arise via the action of isometries 
on a planar profile curve, all curvature notions only depend on the 
profile curve. This fact has been used to prove various classification results 
for these curves and, subsequently, rotational linear Weingarten surfaces. 
However, explicit parametrisation formulas for these surfaces seem only
to be available in special cases.

Channel linear Weingarten surfaces in Euclidean space are either tubular
or surfaces of revolution, and are parallel to either the catenoid
or a CGC surface, as was shown in \cite{hertrich-jeromin2015}
by giving explicit parametrisations in terms of Jacobi's elliptic 
functions (\cite[Chap. 16]{abramowitz1972}).
In this way, the authors obtained a complete and transparent
classification of channel linear Weingarten surfaces in Euclidean space.

In this present text, we aim to complement the existing results by
explicit parametrisations for rotational CGC surfaces, in particular,
in hyperbolic space $\H^3$ and the $3$-sphere $\S^3$ and,
in consequence, for any linear Weingarten surface that is parallel
to such a CGC surface
---
the clear advantage being that many results about such surfaces can
then be derived or verified by mere computation.
Particular attention is paid to a choice of parametrisations that
are well-behaved across singularities of the surfaces,
that necessarily occur in various cases according to Hilbert's theorem.
Our parametrisations provide a complete classification result for
non-tubular channel linear Weingarten surfaces in $\S^3$,
where every such surface is parallel to a CGC surface
(cf \cite{barros2012}),
and encompass the large class of channel linear Weingarten surfaces
in $\H^3$ that have a rotational CGC surface in their parallel 
family (cf \cite{dursun2020}, \cite{lopez2009} or \cite{pampano2020}).

Considering the ambient space form geometries as subgeometries
of Lie sphere geometry will allow for a unified treatment
of the various cases that occur:
note that channel surfaces naturally belong to the realm of sphere geometries,
see \cite{pember2018}, for example.
Further, in \cite{burstall2010}, the authors have shown that 
linear Weingarten surfaces appear as special $\Omega$-surfaces in Lie 
sphere geometry. \Cref{sec:1} will serve the reader as a brief 
introduction into the projective model of this geometry and explain how 
space form geometries may be viewed as subgeometries.
In this way, we may conveniently investigate surfaces in different space forms
in a unified manner.
A Bonnet type theorem (\Cref{prop:Bonnet}) will demonstrate the key role
of CGC surfaces within the class of linear Weingarten
surfaces and will provide for a simple generalisation of our parametrisations
to rotational linear Weingarten surfaces.

As another instance of the unifying sphere geometric treatment,
rotational surfaces will be considered in \Cref{sec:2}, where
we express the Gauss curvature of a rotational 
surface in terms of one parameter. This expression will be used to 
classify rotational CGC surfaces.

In \Cref{sec:3} we provide a Lie geometric version of Vessiot's theorem
\cite[Theorem 3.7.5]{hertrich-jeromin2003}:
as sphere geometries provide a natural ambient setting for channel
surfaces, M\"obius geometry provides for a natural ambient geometry
for isothermic surfaces, that is, surfaces that admit conformal
curvature line parameters. 
Vessiot's theorem states that any channel 
isothermic surface is, upon a suitable stereographic projection 
into Euclidean space, a surface of revolution or has straight 
curvature lines.
Similarly, we shall discover that $\Omega$-surfaces,
a Lie sphere geometric generalisation of isothermic surfaces,
that are additionally channel surfaces are isothermic upon a suitable choice of a M\"{o}bius (sub-)geometry
of Lie sphere geometry (see \Cref{thm:Vessiot}). 

Motivated by \Cref{prop:Bonnet} and \Cref{thm:CLWisRot}, we will investigate 
rotational CGC surfaces in \Cref{sec:4}: elliptic differential 
equations are obtained and used to provide constructions for families of 
such surfaces. The principal aim of this section is to provide general
strategies to solve the occurring differential equations and to discuss
reality of the produced solutions.
Our case 
analysis, depending on the relation between the constant Gauss 
curvature and the curvature of the ambient space form, is yet another 
instance of a splitting that is frequently observed when constructing 
immersions between space forms  \cite{dursun2020}, \cite{ferus1996}, 
\cite{lopez2009a}. 

Specific parametrisations, and the classification results they imply, 
are then stated in \Cref{sec:5}: for instance, all channel linear 
Weingarten surfaces in $\S^3$ are parametrised by the functions given 
in Table \ref{tab:SphericalCase} and a suitable parallel 
transformation.
The corresponding classification in $\H^3$ turns out to be
richer, partly due to the appearance of various types of
``rotations'',
see Tables \ref{tab:HyperbolicCaseElliptic},
\ref{tab:HyperbolicCaseHyperbolic}, and
\ref{tab:HyperbolicCaseParabolic}.

\section{Linear Weingarten surfaces}\label{sec:1}
We consider parametrised surfaces in space forms. For a unified 
treatment we model the space form geometries as subgeometries of Lie 
sphere geometry. Here is a quick glance at our setup, for details see  
\cite{burstall2018}, \cite{cecil2008} or \cite{hertrich-jeromin2020}. 

\subsection{Lie sphere geometry and its subgeometries}

Consider $\R^{4,2}$, a $6$-dimensional real vector space with inner 
product $(.,.)$ of signature ${(-++++-)}$. We call a vector $\v$ 
\emph{timelike}, \emph{spacelike} or \emph{lightlike} depending on 
whether $(\v, \v)$ is negative, positive or vanishes. 

We call the \emph{projective light cone}
\begin{align*}
 \L := \mathbb{P}L^5
 = \mathbb{P}\{\v\in\R^{4,2}\,|\,(\v,\v)=0\}
 = \{\spn{\v}\subset\R^{4,2}|\v\neq 0~\textrm{is lightlike}\},
\end{align*}
the \emph{Lie quadric}, where $\spn{\v}$ denotes the linear span of 
$\v$. Points in the Lie quadric represent oriented $2$-spheres in 
$3$-dimensional space forms (here, points are spheres with vanishing 
radius). 

Lie sphere transformations are given by the action of orthogonal 
transformations of $\R^{4,2}$ on the Lie quadric. For $A\in O(4,2)$ 
we have $A\cdot \spn{\v} = \spn{A\v}$.  

Let $\p\in\R^{4,2}$ be a unit timelike vector, that is $(\p, \p)=-1$. 
The projective sub-quadric
\begin{align*}
	\mathcal{M}_\p := \{x \in \L|x\perp \p\},
\end{align*}
is a model space of $3$-dimensional M\"{o}bius geometry and 
\emph{M\"{o}bius transformations} are induced by orthogonal 
transformations that fix $\p$. We call $\p$ a \emph{point sphere 
complex}, and elements of $\mathcal{M}_\p$ \emph{point spheres}. 
More generally, every $\v \in \R^{4,2}\setminus \{0\}$ spans a 
\emph{linear sphere complex}, consisting of all spheres which are 
represented by null lines perpendicular to it. A sphere $\spn{\s}\in\L$
 contains a point $\spn{\x} \in \mathcal{M}_\p$ if and only if 
$(\x, \s) =0$. 

If we orthogonally project a sphere $s=\spn{\s}\in \L\setminus \mathcal{M}_\p$ onto $\spn{\p}^\perp$
we obtain a vector $\sigma \in \spn{\p}^\perp \cong \R^{4,1}$ with
\begin{align*}
	(\sigma, \sigma)=(\s, \p)^2>0, 
\end{align*}
i.e.,  the representative of a sphere in the projective model of
 M\"{o}bius geometry as described in \cite{hertrich-jeromin2003}. 
We call $\sigma$ a \emph{M\"{o}bius representative of $s$}.

Let $\q \in \R^{4,2}\setminus \{0\}$ be perpendicular to $\p$ and denote
$\kappa:=-(\q,\q)$. Define the affine sub-quadric
\begin{align*}
	\Q:=\{\x\in L^5|(\x,\p) = 0, ~ (\x,\q)=-1\}. 
\end{align*}
Then, $\Q$ has constant curvature $\kappa$, hence each connected component of $\Q$ 
yields a model for a space form geometry. The isometry group of the space form is then 
denoted as $Iso_{\p,\q}(3)$ and consists of all orthogonal 
transformations that fix $\p$ and $\q$. We call $\q$ the 
\emph{space form vector}.
Spheres represented by null lines orthogonal to $\q$ represent \emph{planes};
hence the set of planes in $\Q$ is identified with
\begin{align*}
	\T:=\{\x \in L^5|(\x,\p)=-1,~(\x,\q)=0\}.
\end{align*}

\subsection{Surfaces}\label{sec:Surfaces}
Let $\f:\Sigma^2 \to \Q$ parametrise a surface in a space 
form. Its tangent plane congruence, denoted by $\t:\Sigma^2 \to \T$, is 
of the form $\t = \p + \mathfrak{n}$, where $\mathfrak{n}$ denotes the 
usual Gau{ss} map of $\f$ when $(\q,\q)\neq 0$.

The line congruence $\pounds=\spn{\f, \t}$ is called the \emph{Legendre 
lift of $\f$}. $\pounds$ is a \emph{Legendre immersion}, that is, for 
any two sections $\s_1, \s_2 \in \Gamma \pounds$ we have
\begin{align*}
	(\s_1, \s_2) = 0 ~\textrm{and}~(d\s_1, \s_2) = 0,
\end{align*}
and for all $p\in \Sigma^2$
\begin{align*}
 \forall\s\in\Gamma\pounds\colon d_p\s(X)\in\pounds(p)~
  \Rightarrow~X=0.
\end{align*}
We say that $\pounds$ \emph{envelops} a sphere congruence $\spn{\s}:\Sigma^2\to\L$
if $\s(p) \in \pounds(p)$ for all $p\in\Sigma^2$. 

Lie sphere transformations naturally act on Legendre immersions and 
Legendre lifts of surfaces in space forms: for 
$A \in O(4,2)$,
$$A\cdot \pounds:=A\cdot \spn{\f, \t} = \spn{A\f, A\t},$$
which induces a map on surfaces by mapping $\f$ to the point sphere 
congruence enveloped by $A\cdot \pounds$. The following lemma, a 
proof of which can be found in \cite[Sect 2.5]{cecil2008}, shows 
that this map on surfaces is well-defined. 

\begin{lem}\label{lem:UmiqueSurface}
	Given a point sphere complex $\p$ and a Legendre immersion 
$\Lambda$, there is precisely one point sphere congruence $\f$ 
enveloped by $\Lambda$.
\end{lem}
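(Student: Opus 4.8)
The plan is to establish both \emph{existence} and \emph{uniqueness} of the point sphere congruence $\f$ enveloped by $\Lambda$, working pointwise and then checking smoothness. Fix $p\in\Sigma^2$ and consider the line $\pounds(p)\subset\R^{4,2}$, which by the definition of a Legendre immersion is a null (isotropic) line: for any $\s\in\pounds(p)$ we have $(\s,\s)=0$. A point sphere enveloped by $\Lambda$ at $p$ is a vector $\f(p)\in\pounds(p)$ with $\f(p)\perp\p$. So the question reduces to: given a null line $\ell\subset\R^{4,2}$ and a unit timelike vector $\p$, does $\ell$ meet $\p^\perp$ in exactly one point (projectively)?

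First I would treat uniqueness. Suppose $\s_1,\s_2\in\ell$ both lie in $\p^\perp$. Since $\ell$ is $1$-dimensional, $\s_2=\lambda\s_1$ for some $\lambda\in\R$, so they span the same point of $\L$; hence the point sphere congruence, if it exists, is unique at each $p$. The only way this could fail is if $\ell\subset\p^\perp$ entirely. But $\p^\perp\cong\R^{4,1}$ has signature $(-++++)$, which admits null lines; so I must rule out $\ell\subset\p^\perp$ using the Legendre (contact) condition rather than signature alone. Here is where I would use the second enveloping-type condition $(d\s_1,\s_2)=0$ together with the immersion condition: if the whole line $\pounds(p)$ lay in $\p^\perp$ on a neighbourhood, then $\p$ would be a constant null... no --- more carefully, $\p$ is timelike and fixed, while $\pounds$ is a $2$-plane bundle; one shows that $\pounds(p)\subset\p^\perp$ for all $p$ would force $\Lambda$ to fail the immersion property (the derivative would be trapped), contradicting that $\Lambda$ is a Legendre \emph{immersion}. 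This dichotomy --- either $\pounds(p)\cap\p^\perp$ is a single null line, or $\pounds(p)\subset\p^\perp$ --- and the elimination of the second alternative is the main obstacle, and it is exactly the point where the immersion hypothesis (as opposed to mere Legendre-ness) is essential.

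For existence: given that $\pounds(p)\not\subset\p^\perp$, the $2$-plane $\pounds(p)$ meets the hyperplane $\p^\perp$ in a line $L_0$. Since $\pounds(p)$ is totally null (all its vectors are lightlike and mutually orthogonal, by the first Legendre condition $(\s_1,\s_2)=0$), the line $L_0$ consists of lightlike vectors, so $L_0\in\L$ and $L_0\perp\p$, i.e.\ $L_0\in\mathcal{M}_\p$ is a genuine point sphere lying on $\pounds(p)$. Setting $\f(p):=L_0$ gives the enveloped point sphere congruence set-theoretically.

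Finally I would address smoothness of $p\mapsto\f(p)$. The assignment is given by intersecting the moving null plane $\pounds(p)$ with the fixed hyperplane $\p^\perp$; locally choosing a smooth frame $\s_1,\s_2$ for $\pounds$, we can write $\f=\s_1+t\,\s_2$ and solve $0=(\f,\p)=(\s_1,\p)+t(\s_2,\p)$ for $t$, which depends smoothly on $p$ precisely because $(\s_2,\p)$ does not vanish identically --- and where it does vanish one swaps the roles of $\s_1,\s_2$ or takes a different local frame, the two descriptions agreeing on overlaps by the uniqueness already proved. This patches to a global smooth section $\f:\Sigma^2\to\mathcal{M}_\p\subset\L$. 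I expect the existence/smoothness part to be routine linear algebra and a partition-of-frames argument; the genuinely delicate point, as noted, is extracting from the Legendre \emph{immersion} condition that $\pounds(p)$ is never contained in the point sphere complex $\p^\perp$, and for this I would lean on the reference \cite[Sect 2.5]{cecil2008} for the detailed verification.
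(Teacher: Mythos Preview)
The paper does not prove this lemma itself; it simply cites \cite[Sect 2.5]{cecil2008}. Your outline---existence and uniqueness via intersecting the totally null $2$-plane $\pounds(p)$ with the hyperplane $\p^\perp$---is the standard argument and is essentially correct.

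However, you misidentify the ``genuinely delicate point''. You claim that ruling out $\pounds(p)\subset\p^\perp$ requires the Legendre \emph{immersion} condition, but it does not: it follows from signature alone. Since $\p$ is unit timelike, $\p^\perp$ has signature $(4,1)$, and a quadratic space of signature $(4,1)$ contains no $2$-dimensional totally isotropic subspace (the maximal isotropic dimension is $\min(4,1)=1$). Hence the totally null $2$-plane $\pounds(p)$ can never lie entirely in $\p^\perp$, and by dimension count its intersection with $\p^\perp$ is automatically a single null line. The immersion hypothesis is therefore not used anywhere in this lemma; the statement holds for any Legendre map. Your smoothness argument---choosing a local frame $\s_1,\s_2$ and solving $(\s_1,\p)+t(\s_2,\p)=0$---is fine, and again the fact that $(\s_1,\p)$ and $(\s_2,\p)$ do not vanish simultaneously is guaranteed by the signature argument just given, not by the immersion condition.
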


Apart from the point sphere and tangent plane congruences, Legendre 
lifts also envelop their \emph{curvature sphere congruences} $s_1,s_2$:
away from umbilic points, let $(u, v)$ denote curvature line coordinates. Then, the curvature sphere congruences are characterised by
\begin{align*}
	(\s_1)_u, (\s_2)_v \in \Gamma\pounds,
\end{align*}
for any lifts $\s_i$ of $s_i$. Further, we have $\pounds = \spn{\s_1, \s_2}$. 

A sphere congruence $s:\Sigma^2 \to \L$ is \emph{isothermic} 
if it allows for a \emph{Moutard lift $\s$}, that is,
\begin{align*}
	\s_{uv} ||\s, 
\end{align*}
for parameters $(u,v)$, which are then curvature line parameters. 
Equivalently, any lift $\s$ of $s$ satisfies a \emph{Laplace equation}
$0=\s_{uv}+a\s_u+b\s_v+c\s$ with equal \emph{Laplace invariants},
$a_u+ab-c=b_v+ab-c$ (\cite[Chap II]{darboux1889}, \cite{doliwa2003}).

\begin{defi}\label{def:OmegaSurf}
 A surface $\f$ in a space form is called an \emph{$\Omega$-surface},
if its Legendre lift $\pounds$ envelops a (possibly complex conjugate)
pair of isothermic sphere congruences $s^\pm$ that separate the 
curvature spheres harmonically.
We will also call $\pounds$ itself an $\Omega$-surface.
\end{defi}

\begin{bem}
	For real isothermic sphere congruences, this is the definition of
Demoulin in \cite{demoulin1911} and \cite{demoulin1911a}.
However, we will also consider $\Omega$-surfaces enveloping complex isothermic
sphere congruences (also characterised by the existence of Moutard lifts).
If the two isothermic sphere congruences coincide
(with one of the curvature sphere congruences),
the surface $\f$ is called an \emph{$\Omega_0$-surface}.
\end{bem}

In our considerations, we will use a characterisation of 
$\Omega$-surfaces in terms of special lifts of their curvature 
sphere congruences: while one can always choose lifts 
$\s_1 \in \Gamma s_1$ and $\s_2 \in \Gamma s_2$ of the curvature 
sphere congruences of a Legendre lift such that
\begin{align*}
	(\s_1)_u \in \Gamma s_2,~\textrm{and}~(\s_2)_v\in \Gamma s_1,
\end{align*}
for $\Omega$-surfaces, even more can be achieved (\cite{burstall2012}).

\begin{prop}\label{prop:OmegaChar}
	A surface $\f:\Sigma^2 \to \Q$ with curvature sphere 
congruences $s_1$ and $s_2$ is an $\Omega$-surface, if  and only if 
there exists a function $\phi$ and lifts $\s_1, \s_2$ of $s_1, s_2$
such that
\begin{align}\label{eq:OmegaCurvatureSpheres}
	\begin{split}
		(\s_1)_u &= \phi_u \s_2, \\
		(\s_2)_v &= \varepsilon^2\phi_v \s_1,
	\end{split}
\end{align}
where $\varepsilon\in \{1, i\}$. 
\end{prop}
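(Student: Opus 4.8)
The plan is to work with the curvature sphere congruences $s_1, s_2$ and translate the isothermic condition on the enveloped pair $s^\pm$ into the normal form \eqref{eq:OmegaCurvatureSpheres}. First I would fix, away from umbilics, curvature line coordinates $(u,v)$ and choose lifts $\s_1 \in \Gamma s_1$, $\s_2 \in \Gamma s_2$ with $(\s_1)_u \in \Gamma s_2$ and $(\s_2)_v \in \Gamma s_1$, as recalled just before the statement; since $\pounds = \spn{\s_1, \s_2}$ and $(\s_1, \s_1) = (\s_2, \s_2) = (\s_1, \s_2) = 0$ with $\s_1, \s_2$ everywhere independent, one may normalise so that $((\s_1)_u, \s_2) = ((\s_2)_v, \s_1)$ is, say, a fixed nonzero constant or can be absorbed; more precisely, after the initial choice we have $(\s_1)_u = \alpha\, \s_2$ and $(\s_2)_v = \beta\, \s_1$ for scalar functions $\alpha, \beta$, and the task is to rescale $\s_1 \mapsto \lambda \s_1$, $\s_2 \mapsto \mu \s_2$ so that the new coefficients become $\phi_u$ and $\varepsilon^2 \phi_v$ for a single function $\phi$ and a constant $\varepsilon \in \{1, i\}$.

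Next I would bring in the hypothesis that $\pounds$ is an $\Omega$-surface in the sense of \Cref{def:OmegaSurf}: it envelops isothermic sphere congruences $s^\pm = \spn{\s_1 \pm t\, \s_2}$ (after suitable scaling, the harmonic separation of $s^\pm$ from $s_1, s_2$ forces this symmetric form, with a function $t$ or a constant depending on normalisation) admitting Moutard lifts, i.e. $\s^\pm_{uv} \parallel \s^\pm$. Expanding $\s^\pm_{uv}$ using $(\s_1)_u = \alpha \s_2$, $(\s_2)_v = \beta \s_1$ together with the cross-derivatives $(\s_1)_v$ and $(\s_2)_u$ (which a priori lie in $\pounds$ plus a transversal part, but the Legendre and enveloping conditions constrain them), and demanding that the $\s_1$- and $\s_2$-components of $\s^\pm_{uv}$ vanish, yields a system of first-order PDEs in $\alpha$, $\beta$ and the scaling of $s^\pm$. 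The key consequence to extract is a compatibility relation of the shape $(\log \alpha)_v = (\text{something symmetric})$ and $(\log \beta)_u = (\text{the same})$, equivalently that $\alpha\, du + (\text{conjugate factor})\,\beta\, dv$ is closed, which is exactly what is needed to write $\alpha = \phi_u$ and $\varepsilon^2 \beta = \phi_v$ after absorbing a multiplicative constant into the choice of $\varepsilon$ (the sign/reality of that constant dictating whether $\varepsilon = 1$ or $\varepsilon = i$, and correspondingly whether the two isothermic congruences are real or complex conjugate). Conversely, given lifts satisfying \eqref{eq:OmegaCurvatureSpheres}, I would directly verify that $\s^\pm := \s_1 \pm \varepsilon\, \s_2$ are Moutard lifts of isothermic sphere congruences separating $s_1, s_2$ harmonically: compute $\s^\pm_{uv} = (\s_1)_{uv} \pm \varepsilon (\s_2)_{uv}$ and use \eqref{eq:OmegaCurvatureSpheres} to show the result is proportional to $\s^\pm$, so that $(u,v)$ are curvature line parameters making $\s^\pm$ Moutard.

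The main obstacle I anticipate is the bookkeeping of the transversal (out-of-$\pounds$) parts of the mixed derivatives $(\s_1)_v$ and $(\s_2)_u$ and of $\s^\pm_{uv}$: the Moutard condition is a statement modulo $\s^\pm$ only, so one must carefully separate the component along $\s^\pm$, the component along the complementary sphere $\s_1 \mp \varepsilon \s_2$, and the genuinely transversal component, and show that the $\Omega$-hypothesis kills precisely the middle one in a way that is consistent for both signs simultaneously. This is where the harmonic-separation clause in \Cref{def:OmegaSurf} does the real work, and where citing the structure equations of a Legendre immersion (as in \cite{burstall2012}) streamlines the argument; I would lean on that reference for the precise form of the connection coefficients rather than re-deriving them. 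Once the transversal terms are under control, the passage from ``closed $1$-form'' to ``exact, hence $= d\phi$'' is immediate on a simply connected coordinate patch, and the global statement follows since $\phi$ is determined up to an additive constant on overlaps.
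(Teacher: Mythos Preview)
Your converse direction is exactly the paper's: set $\s^\pm := \s_1 \pm \varepsilon\s_2$ and compute $(\s^\pm)_{uv} = \varepsilon^2(\phi_u\phi_v \pm \varepsilon\phi_{uv})\,\s^\pm$, so $\s^\pm$ are Moutard lifts of isothermic sphere congruences harmonically separating $s_1,s_2$.

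For the forward direction your route and the paper's diverge. You begin with lifts already normalised so that $(\s_1)_u = \alpha\s_2$ and $(\s_2)_v = \beta\s_1$, then propose to rescale $\s_1,\s_2$ so that the Moutard lifts of $s^\pm$ take the form $\s_1\pm\varepsilon\s_2$; this obliges you to carry the Moutard lifts as $\rho^\pm(\s_1\pm t\,\s_2)$ with unknown functions $\rho^\pm,t$ and to extract the closedness of $\alpha\,du+\varepsilon^2\beta\,dv$ from the resulting system --- workable, but it generates precisely the bookkeeping you flag as the main obstacle. The paper inverts the logic: since $s^\pm$ separate $s_1,s_2$ harmonically, the \emph{given} Moutard lifts themselves can be written $\s^\pm=\s_1\pm\varepsilon\s_2$ for \emph{specific} lifts $\s_1,\s_2$ of the curvature spheres, with no prior normalisation imposed. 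One then writes $(\s_1)_u=\overline{\alpha}\s_1+\alpha\s_2$ and $(\s_2)_v=\beta\s_1+\overline{\beta}\s_2$ in full generality; the two Moutard conditions $\s^\pm_{uv}\parallel\s^\pm$ together force $\overline{\alpha}=\overline{\beta}=0$ (using that $(\s_1)_v,(\s_2)_u\notin\Gamma\pounds$) and then $\alpha_v=\varepsilon^2\beta_u$, which is the integrability condition for $\phi_u=\alpha$, $\phi_v=\varepsilon^2\beta$. No rescaling step is needed, and once $\overline{\alpha}=0$ one has $(\s_1)_{uv}=(\alpha\s_2)_v\in\Gamma\pounds$, so the transversal parts you worry about never enter the decisive computation. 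In short: your approach front-loads the normalisation and pays for it in scaling unknowns; the paper back-loads it and gets $\overline{\alpha}=\overline{\beta}=0$ for free from the Moutard hypothesis.
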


\begin{bem}
For $\Omega_0$-surfaces, \eqref{eq:OmegaCurvatureSpheres} holds with 
$\varepsilon = 0$. 
\end{bem}

\begin{proof}
A proof for this can be found in \cite[Sect 4.3]{polly2017}. We 
summarise the argument given there briefly:

Let $\pounds$ be an $\Omega$-surface with Moutard lifts $\s^\pm$ of 
the enveloped isothermic sphere congruences. Since they separate the 
curvature sphere congruences $s_1$ and $s_2$ harmonically, there are lifts 
$\s_1 \in \Gamma s_1$ and $\s_2 \in \Gamma s_2$ such that
\begin{align*}
	\s^\pm = \s_1 \pm \varepsilon \s_2,
\end{align*}
and functions $\alpha, \overline{\alpha}, \beta, \overline{\beta}$ 
such that
\begin{align*}
	(\s_1)_u = \overline{\alpha}\s_1 + \alpha \s_2 \\
	(\s_2)_v = \beta \s_1 + \overline{\beta}\s_2.
\end{align*}
The Moutard condition $\s^\pm_{uv} || \s^\pm \in \pounds$ yields 
$\overline{\alpha}=\overline{\beta}=0$ as well as 
$\alpha_v=\varepsilon^2 \beta_u$, which is the integrability 
condition of 
\begin{align*}
	\phi_u &= \alpha \\
	\phi_v &=\epsilon^2 \beta.
\end{align*}
Hence \eqref{eq:OmegaCurvatureSpheres} is satisfied with any 
solution of that system. 

Conversely, given lifts satisfying \eqref{eq:OmegaCurvatureSpheres}, 
the two sphere congruences given by
${s^\pm = \spn{\s^\pm}}$ with ${\s^\pm := \s_1 \pm \varepsilon\s_2}$
obviously separate the curvature spheres harmonically. They are also
isothermic, as 
\begin{align*}
	(\s^\pm)_{uv} = \varepsilon^2 \left(\phi_u \phi_v 
		\pm \varepsilon \phi_{uv}\right)\s^\pm
\end{align*}
demonstrates that $\s^\pm$ are Moutard lifts. 
\end{proof}

\subsection{Curvature}
The principal curvatures of $\f$ can be expressed by
\begin{align}\label{eq:PrincipalCurvature}
	k_i = \frac{(\s_i,\q)}{(\s_i,\p)}, 
\end{align}
which is lift-invariant. Denote by $K=k_1 k_2$ and 
$H=\frac{k_1 +k_2}{2}$ the (extrinsic) Gau{ss} and mean curvatures of 
$\f$. We call $\f$ a \emph{linear Weingarten surface} if there is a 
non-trivial triple of constants $a, b, c$ such that the linear 
Weingarten condition
\begin{align*}
	aK + 2bH + c = 0,
\end{align*}
holds. A linear Weingarten surface is called \emph{tubular} if 
$ac-b^2 =0$, that is, if one of the principal curvatures 
is constant.
In what follows we will generally assume that $\f$ is non-tubular. 

We restate a version of the linear Weingarten condition given in 
\cite{burstall2012}. Since the principal curvatures of a linear 
Weingarten surface $\f$ can be written in terms of arbitrary lifts 
$\s_i$ of the curvature sphere congruences as in 
\eqref{eq:PrincipalCurvature}, we may write the linear Weingarten 
condition as 
\begin{align}\label{eq:LinearWeingartenCondition}
	 \begin{pmatrix}
		 (\s_1,\q), (\s_1,\p)
	 \end{pmatrix} W \begin{pmatrix}
		 (\s_2,\q) \cr (\s_2,\p)
	 \end{pmatrix} = 0,~\textrm{with}~W=\begin{pmatrix}
		 a &b \cr b &c
	 \end{pmatrix}.
\end{align}

Then, $W$ induces a non-degenerate bilinear form on $\spn{\p,\q}$, 
and the linear Weingarten condition can be seen as an orthogonality condition
for two vectors with respect to that form. A change of basis in $\spn{\p,\q}$,
given by a $\operatorname{GL}(2)$ matrix $B$ as $(\q,\p)\mapsto(\q,\p)B^{-1}$,
changes the linear Weingarten condition by
\begin{align}\label{eq:BaseChange}
	W \mapsto BWB^t.
\end{align}
In \cite{burstall2012} this is used to prove the following theorem:

\begin{thm}\label{thm:LinearWeingartenInLie}
	Non-tubular linear Weingarten surfaces in space forms are those 
$\Omega$-surfaces $\pounds = \spn{\s^+, \s^-}$ that envelop a (possibly 
complex conjugate) pair of isothermic sphere congruences 
$s^\pm = \spn{\s^\pm}$, each of which takes values in a linear sphere 
complex $\p^\pm$. The plane $\spn{\p^+, \p^-}$ is the plane spanned 
by the point sphere complex $\p$ and the space form vector $\q$.
\end{thm}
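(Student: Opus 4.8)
The plan is to use the characterisation of $\Omega$-surfaces from \Cref{prop:OmegaChar} together with the matrix reformulation \eqref{eq:LinearWeingartenCondition} of the linear Weingarten condition, and to exploit the freedom \eqref{eq:BaseChange} of changing basis in $\spn{\p,\q}$. First I would show that a non-tubular linear Weingarten surface is an $\Omega$-surface: starting from \eqref{eq:LinearWeingartenCondition}, diagonalise (or anti-diagonalise) the symmetric matrix $W$ by a suitable $B\in\operatorname{GL}(2)$. Since $\f$ is non-tubular, $\det W = ac-b^2\neq 0$, so over $\R$ or $\C$ we may bring $W$ to the form $\operatorname{diag}(1,-\varepsilon^{-2})$ with $\varepsilon\in\{1,i\}$ (the sign of $\det W$ governing whether the two resulting complexions are real or complex conjugate). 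Writing $(\p^+,\p^-) := (\q,\p)B^{-1}$ for the corresponding new basis of $\spn{\p,\q}$, the condition \eqref{eq:LinearWeingartenCondition} becomes $(\s_1,\p^+)(\s_2,\p^+) = \varepsilon^{-2}(\s_1,\p^-)(\s_2,\p^-)$, which is exactly the statement that, along the surface, one can rescale the curvature sphere lifts so that $(\s_1,\p^+) = \varepsilon(\s_1,\p^-)$ and $(\s_2,\p^+) = -\varepsilon(\s_2,\p^-)$, i.e.\ so that $\s^\pm := \s_1\pm\varepsilon\s_2$ is orthogonal to $\p^\mp$. Thus each $s^\pm = \spn{\s^\pm}$ lies in the linear sphere complex $\p^\pm$.

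Next I would verify that these $s^\pm$ are isothermic and separate the curvature spheres harmonically; the latter is immediate from $\s^\pm = \s_1\pm\varepsilon\s_2$, and the former follows by differentiating the orthogonality relations $(\s^+,\p^-)=(\s^-,\p^+)=0$ together with the Legendre-lift identities $(\s_i,\s_j)=0$ to extract a Moutard lift, exactly as in the proof of \Cref{prop:OmegaChar} run in reverse; concretely one checks that the lift $\s^\pm$ (after a suitable scalar rescaling depending on $\phi$) satisfies $(\s^\pm)_{uv}\parallel\s^\pm$. Conversely, given an $\Omega$-surface whose isothermic envelopes $s^\pm$ lie in linear sphere complexes $\p^\pm$ spanning the same plane as $\p$ and $\q$, I would reverse the above: the membership $\s^\pm\perp\p^\mp$ combined with $\s^\pm = \s_1\pm\varepsilon\s_2$ (valid by harmonic separation) yields a linear relation among $(\s_i,\p^+)$ and $(\s_i,\p^-)$, hence among $(\s_i,\p)$ and $(\s_i,\q)$, which by \eqref{eq:PrincipalCurvature} is precisely an affine relation $aK+2bH+c=0$; non-degeneracy of the pairing induced by $W$ (equivalently, $\p^+\not\parallel\p^-$) guarantees the triple $(a,b,c)$ is non-trivial and that the surface is non-tubular.

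The one remaining point is the identification $\spn{\p^+,\p^-} = \spn{\p,\q}$, but this is built into the construction: $\p^\pm$ were \emph{defined} as the image of the basis $(\q,\p)$ under the change of basis $B^{-1}$, so they automatically span $\spn{\p,\q}$; in the converse direction it is part of the hypothesis. The main obstacle I anticipate is not any single deep step but rather the bookkeeping around the two cases $\varepsilon\in\{1,i\}$ and keeping the real/complex-conjugate structure consistent: one must check that when $\det W<0$ (after normalising $a>0$, say) the diagonalising matrix $B$ is real and the two complexes $\p^\pm$ are genuinely distinct real vectors, whereas when $\det W>0$ one is forced into $\varepsilon=i$ and $\p^\pm$ form a complex conjugate pair, and that in the degenerate-looking borderline one does not accidentally fall into the tubular case $\det W=0$ which has been excluded. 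Handling the rescaling of the lifts $\s_i$ so that the constants in $(\s_1,\p^+)=\varepsilon(\s_1,\p^-)$ etc.\ come out correctly — rather than merely proportional — also needs a small but careful argument using that $\s_i$ is only defined up to scale.
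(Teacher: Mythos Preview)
The paper does not actually supply its own proof of this theorem: immediately before the statement it explains the change-of-basis idea \eqref{eq:BaseChange} and then says ``In \cite{burstall2012} this is used to prove the following theorem,'' so the result is quoted from \cite{burstall2012}. Your outline is in the same spirit as that setup --- diagonalise $W$ by a change of basis in $\spn{\p,\q}$ and read off the two linear sphere complexes $\p^\pm$ --- so at the level of strategy you are aligned with what the paper sketches.

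That said, there is a genuine gap in your argument. The step ``verify that these $s^\pm$ are isothermic \ldots\ follows by differentiating the orthogonality relations \ldots\ to extract a Moutard lift, exactly as in the proof of \Cref{prop:OmegaChar} run in reverse'' is where the analytic content lives, and you have not actually carried it out. Orthogonality of $\s^\pm$ to \emph{constant} vectors $\p^\pm$ is a pointwise algebraic condition; the Moutard equation $(\s^\pm)_{uv}\parallel\s^\pm$ is a PDE, and it does not follow from a one-line differentiation. In \Cref{prop:OmegaChar} one \emph{starts} from the $\Omega$-hypothesis (existence of Moutard lifts) and deduces the special curvature-sphere lifts; running it ``in reverse'' presupposes exactly what you need to prove. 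In \cite{burstall2012} this direction is established by an honest computation (equivalently, via the constant conserved quantity formulation alluded to in \Cref{bem:ConservedQuantities}), and your proposal would need to reproduce that.

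Two smaller points. First, your labelling has $\s^\pm\perp\p^\mp$, whereas the theorem asserts $s^\pm$ takes values in the complex $\p^\pm$, i.e.\ $\s^\pm\perp\p^\pm$; this is only a relabelling but should be made consistent. Second, with the diagonal form $W=\operatorname{diag}(1,-\varepsilon^{-2})$ the ``rescale the curvature sphere lifts so that $(\s_1,\p^+)=\varepsilon(\s_1,\p^-)$ and $(\s_2,\p^+)=-\varepsilon(\s_2,\p^-)$'' step does not actually go through: rescaling $\s_i$ does not change the ratio $(\s_i,\p^+)/(\s_i,\p^-)$. The anti-diagonal normal form (which you mention parenthetically) is the one that makes this rescaling argument work cleanly, and you should commit to it.
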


\begin{bem}\label{bem:ConservedQuantities}
	The linear sphere complexes $\p^\pm$ are constant conserved quantities 
for the isothermic sphere congruences of the enveloped pair in the sense 
of \cite[Def 2.1]{burstall2012a}, for details see 
\cite{burstall2019a}.
\end{bem}

This characterisation is useful to investigate parallel families of 
linear Weingarten surfaces: Given a space form $\Q$, a 
\emph{parallel transformation} $P$ is a Lie sphere transformation 
that acts solely on $\spn{\p,\q}$. It is well known that parallel 
transformations preserve linear Weingarten surfaces
(see, for instance, \cite[Sect 2.7]{palais1988}),
a fact that follows by straightforward computations in a space form,
or from \Cref{thm:LinearWeingartenInLie}: $P\cdot\pounds = \spn{P\s^+, P\s^-}$ 
is still spanned by isothermic sphere congruences taking values in 
fixed linear sphere complexes $P\p^\pm$. The fact that $P\p^\pm$ span 
$\spn{\p, \q}$ makes $P\pounds$ a linear Weingarten surface in $\Q$ 
again.

Let $\f$ be a linear Weingarten surface in $\Q$ satisfying 
\eqref{eq:LinearWeingartenCondition}. If we interpret the action 
of $P$ in $\spn{\p,\q}$ as a change of basis in that plane, we learn
that the parallel surface $\tilde{\f}\in \Gamma P\pounds$ satisfies the linear 
Weingarten equation
\begin{align*}
	\begin{pmatrix}
		 (\tilde{\s}_1,\q), (\tilde{\s}_1,\p)
	 \end{pmatrix} \tilde{W} \begin{pmatrix}
		 (\tilde{\s}_2,\q)\\ (\tilde{\s}_2,\p)
	 \end{pmatrix} = 0,
\end{align*}
with a matrix
$\tilde{W}= P W P^t$
of the form given in \eqref{eq:BaseChange}.
In \cite{burstall2018}, the authors investigate parallel families 
of discrete linear Weingarten surfaces in this manner. For our 
purpose of classifying channel linear Weingarten surfaces, we formulate
the following proposition, the proof of which is analogous to that of the
discrete version, see \cite[Sect 4.6]{burstall2018}.

\begin{prop}\label{prop:Bonnet}
	Let $\f:\Sigma \to \Q$ be a non-tubular linear Weingarten 
surface satisfying \eqref{eq:LinearWeingartenCondition} in a space 
form of curvature $\kappa=-(\q, \q)\in\{-1,0,+1\}$. 
	\begin{enumerate}
		\item $\kappa=1$: the parallel family of $\f$ contains two 
antipodal pairs of CGC surfaces.
		\item $\kappa=0$: if $c\neq 0$, $\f$ is parallel to a CGC surface with $K\neq 0$.
		\item $\kappa=-1$: if $\left| \frac{a+c}{2}\right|>|b|$, $\f$ 
is parallel to precisely one CGC surface with 
$K\neq 0$. 
	\end{enumerate}
\end{prop}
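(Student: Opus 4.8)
The plan is to work entirely inside the plane $\spn{\p,\q}$ equipped with the two bilinear forms at play: the restriction of the ambient inner product $(.,.)$, which has signature depending on $\kappa = -(\q,\q)$, and the symmetric form $W = \begin{pmatrix} a & b \\ b & c\end{pmatrix}$ encoding the linear Weingarten condition via \eqref{eq:LinearWeingartenCondition}. A parallel transformation $P$ acts on $\spn{\p,\q}$ and, as recorded in the discussion preceding the proposition, transforms $W \mapsto PWP^t$ while it must preserve the space form, i.e.\ fix (up to scale and the normalisation $(\q,\q)=-\kappa$, $(\p,\p)=-1$) the form $(.,.)$ on $\spn{\p,\q}$. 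So $P$ ranges over the isometry group of the inner product on $\spn{\p,\q}$ (together with reflections/antipodal maps), and the question becomes: for which $\kappa$ and which $W$ can we bring $W$, by such a $P$, into the form that characterises a CGC surface? First I would identify that characterising form: a surface in $\Q$ has constant Gauss curvature exactly when its linear Weingarten matrix is (a multiple of) $\begin{pmatrix} 1 & 0 \\ 0 & c\end{pmatrix}$ with the $2bH$ term absent and $a \neq 0$ — that is, $b=0$ and $a\neq0$ — reading off from $aK + 2bH + c = 0$; the case $K \neq 0$ additionally requires $c \neq 0$. (For $\kappa=1$ one should be slightly more careful, since there the CGC condition interacts with the geometry of $\S^3$; I will phrase it as: $\tilde W$ is diagonal in the basis adapted to $\p,\q$.)

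Next I would treat the three cases as a simultaneous-diagonalisation problem for the pair of forms $\bigl((.,.)|_{\spn{\p,\q}},\, W\bigr)$. In the basis $(\q,\p)$ the first form is $\operatorname{diag}(-\kappa,-1)$.
\begin{itemize}
\item For $\kappa=1$ the inner product form is $\operatorname{diag}(-1,-1)$, i.e.\ (negative) definite, so the group of allowed $P$ is essentially $O(2)$ (times scalars/antipodal maps), and $W$, being symmetric, can always be diagonalised by an orthogonal matrix; the two eigendirections, and their antipodes, give the two antipodal pairs of CGC surfaces. This is the clean case with no inequality hypothesis needed.
\item For $\kappa=0$ the form is $\operatorname{diag}(0,-1)$, degenerate; the allowed $P$ preserve the null line $\spn{\q}$ (the plane $\Q$ is "at infinity" in a Euclidean-like picture), so $P$ is a group of shears plus scalings. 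One checks that a shear $\begin{pmatrix}1 & t\\ 0 & 1\end{pmatrix}$ can kill the off-diagonal entry of $W$ precisely when the resulting matrix is diagonal with nonzero $(1,1)$-entry, and $K\neq 0$ forces the other diagonal entry nonzero; tracking the transformation shows this is possible exactly when $c\neq 0$. (When $c=0$ the surface is already of nonzero-$K$ type or the construction degenerates — that is why the hypothesis appears.)
\item For $\kappa=-1$ the form is $\operatorname{diag}(1,-1)$, Lorentzian, and the allowed $P$ form (a group isomorphic to) $O(1,1)$ together with the relevant discrete maps. Here $W$ is simultaneously diagonalisable with $(.,.)|_{\spn{\p,\q}}$ iff the endomorphism $G^{-1}W$ (with $G=\operatorname{diag}(1,-1)$) is diagonalisable over $\R$, which by a direct discriminant computation is governed by the sign of something like $\bigl(\tfrac{a+c}{2}\bigr)^2 - b^2$ (after also using the non-tubular condition $ac-b^2\neq0$ to rule out the parabolic/degenerate subcase). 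When $\left|\tfrac{a+c}{2}\right| > |b|$ the two eigenvalues are real and distinct, giving a hyperbolic rotation $P$ that diagonalises $W$; the "precisely one" assertion reflects that the two eigenlines are interchanged by the antipodal-type symmetry but yield the same surface, or that only one of the two diagonal forms corresponds to an admissible CGC surface with $K\neq0$ in $\H^3$.
\end{itemize}

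I would then, in each case, verify the count and the $K\neq0$ claim by substituting the normalised $\tilde W$ back into \eqref{eq:LinearWeingartenCondition} and reading off via \eqref{eq:PrincipalCurvature} that $K = k_1 k_2$ is forced constant (and nonzero when the stated hypothesis holds), invoking \Cref{thm:LinearWeingartenInLie} to know that the parallel-transformed object is still a genuine linear Weingarten surface in $\Q$. The main obstacle I anticipate is not any single computation but the bookkeeping of \emph{which} diagonalising transformations $P$ actually lie in the parallel-transformation group (i.e.\ preserve $\spn{\p,\q}$ with the correct normalisation of $\p$ and $\q$, rather than an arbitrary basis change) and, relatedly, getting the multiplicities right — the difference between "two antipodal pairs" ($\kappa=1$), "one" ($\kappa=0$), and "precisely one" ($\kappa=-1$) comes down to counting orbits of eigenlines under the residual discrete symmetries (antipodal map, swap of the two isothermic congruences $s^\pm$), and to noticing that in the $\kappa=0,-1$ cases the inequality hypotheses are exactly what guarantees the eigenlines avoid the null directions of $(.,.)$, so that the resulting $\tilde\q$ is again a bona fide space form vector with $\kappa\neq0$. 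Once the group is pinned down, each case is a short explicit $2\times2$ diagonalisation, so I would keep those computations terse and refer to \cite[Sect 4.6]{burstall2018} for the parallel structural parts.
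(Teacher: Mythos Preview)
Your approach is correct and is precisely the strategy the paper has in mind: the paper does not actually give a proof of this proposition but instead states that ``the proof of which is analogous to that of the discrete version, see \cite[Sect~4.6]{burstall2018}'', and the simultaneous-diagonalisation argument on $\spn{\p,\q}$ you outline is exactly that argument, carried out case-by-case according to the signature of $(.,.)|_{\spn{\p,\q}}$. Your identification of the relevant orbit groups ($O(2)$, shears, $O(1,1)$), the discriminant $(a+c)^2-4b^2$ in the hyperbolic case, and the shear $\bigl(\begin{smallmatrix}1&t\\0&1\end{smallmatrix}\bigr)$ giving $\tilde b = b+ct$ in the Euclidean case (hence the condition $c\neq0$) are all correct; the residual bookkeeping you flag about admissible $P$ and the multiplicity count is indeed where the care is required, but involves no further ideas.
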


\begin{bem}
	The parallel CGC surfaces satisfy $K\neq 0$, because surfaces 
with $K=0$ are tubular and tubularity is preserved under parallel 
transformation.
Similarly, parallel transformations in a space form 
with $\kappa \neq 0$ preserve flatness, that is, vanishing of the intrinsic 
Gauss curvature $K+\kappa$.
For $\kappa =-1$, flat surfaces satisfy $\tfrac{a+c}{2}=b=0$, 
hence $\f$ is parallel to a non-flat surface if 
$\left| \tfrac{a+c}{2}\right|>|b|$.
\end{bem}

\begin{bem}
In the remaining cases,
$\kappa=0$ and $c=0$ or
$\kappa=-1$ and $\left|\frac{a+c}{2}\right|\leq|b|\neq0$,
 that are not stated in the proposition,
the parallel family does not contain a CGC surface. 

In the
Euclidean case, parallel families like this consist of a 
minimal surface and its parallel surfaces, all of which have 
constant harmonic mean curvature, that is, constant ratio of
Gauss and mean curvature. It was mentioned in 
\cite{hertrich-jeromin2015} that the only minimal channel surface in
$\R^3$ is the catenoid.

For $\kappa=-1$, each parallel family
not containing a CGC surface contains either one CMC surface or
one surface of constant harmonic mean curvature. Classification of these
surfaces will be the subject of a future publication.
\end{bem}

\section{Rotational surfaces}\label{sec:2}
The goal of this section is to obtain formulas for the Gauss curvature 
of rotational surfaces. We aim to achieve this in a symmetric way, so 
that our formulas are as independent as possible of the type of space form and 
rotation. 

The isometry group of a space form $\Q$ is the subgroup 
$\operatorname{Iso}_{\p,\q}(3) \subset O(4,2)$ of orthogonal 
transformations that fix the point sphere complex $\p$ and the space 
form vector $\q$. For a $2$-plane $\Pi\perp \spn{\p,\q}$, we call a
$1$-parameter subgroup of isometries $\rho_1$ a \emph{subgroup of 
rotations  (in $\Pi$)}, if it acts as the identity on $\Pi^\perp$. 
Denote by $\sgn{\Pi}$ the signature of the induced metric on $\Pi$.
 We call a subgroup of rotations
\begin{itemize}
	\item \emph{elliptic} if $\sgn{\Pi}=(++)$,
	\item \emph{parabolic} if $\sgn{\Pi}=(+0)$, or
	\item \emph{hyperbolic} if $\sgn{\Pi}=(+-)$. 
\end{itemize}
Note that these are all possible signatures because $\Pi$ is 
perpendicular to the timelike point sphere complex $\p$. The causal 
character of the space form vector $\q$ further restricts the 
possible signatures: for instance, there are no parabolic subgroups 
of rotations in $\S^3$ ($\q$ timelike) but they act as translations 
on $\R^3$ ($\q$ lightlike). Hyperbolic subgroups of rotations only 
exist in $\H^3$.

Let $\{\e_1, \v_1\}$ denote an orthogonal basis of 
$\Pi\subset \R^{4,2}$, where $(\e_1, \e_1)=1$ and
\begin{align*}
	 \kappa_1 := (\v_1, \v_1)
\end{align*}
encodes the signature of $\Pi$. Denote 
\begin{align*}
  \v(\theta) = \rho_1(\theta) \v_1,~\e(\theta) = \rho_1(\theta) \e_1,
\end{align*}
and, since $\rho_1$ is a $1$-parameter subgroup of $O(4,2)$, the $\theta$-derivative $\v'$ of $\v$ is given as
\begin{align*}
  \v'(\theta) = \rho_1(\theta) \v'(0),
\end{align*}
and similarly for $\e$. It is easy to see that
\begin{align*}
  (\v'(0), \v_1 ) = (\e'(0), \e_1) = 0,
\end{align*}
hence, upon changing the parameter $\theta$, we have $\e'(0) = \v_1$ which yields
\begin{align*}
  \v'(\theta) = -\kappa_1 \e(\theta), ~\textrm{and}~ \e'(\theta) = \v(\theta).
\end{align*}
Setting 
\begin{align*}
c_{\lambda}(\psi):=\left\{ \begin{array}{c}
	\cos(\sqrt{\lambda}~\psi) \\ 1 \\ \cosh(\sqrt{-\lambda}~\psi) 
\end{array}\right.\quad \textrm{and}\quad	s_{\lambda}(\psi):=
 \left\{ \begin{array}{cll}
 \tfrac{1}{\sqrt{\lambda}}\sin(\sqrt{\lambda}~\psi) 
	&\textrm{for} &\lambda>0 \\
 \psi &\textrm{for} &\lambda=0 \\
 \tfrac{1}{\sqrt{-\lambda}}\sinh(\sqrt{-\lambda}~\psi) 
	&\textrm{for} &\lambda<0,
\end{array}\right.
\end{align*}
we have
\begin{align*}
  \e(\theta) &= c_{\kappa_1}(\theta) \e_1 + s_{\kappa_1}(\theta) \v_1, \\
  \v(\theta) &= -\kappa_1 s_{\kappa_1}(\theta) \e_1 + 
               c_{\kappa_1}(\theta) \v_1, 
\end{align*}
compare\footnote{Note upon comparing that the author there uses a 
different sign convention for $\kappa_1$.} with 
\cite[Sect 3.7.6]{hertrich-jeromin2003}. 

\begin{bem}
The orbit of $\v_1$ under the action of an elliptic (hyperbolic)
subgroup of rotations is an ellipse (a hyperbola) in $\Pi$. If $\Pi$ is 
isotropic, the parabolic subgroup of rotations $\rho_1$ fixes the lightlike 
$\v_1$. In this case, the orbit of any lightlike $\v_2 \in \spn{\e_1}^\perp$
with $(\v_1,\v_2)=-1$ is a parabola in the affine $2$-plane $\v_2 + \Pi$, given
as
\begin{align*}
  \rho_1(\theta) \v_2 = \v_2 + \theta \e_1 + \tfrac{\theta^2}{2}\v_1.
\end{align*}
In \Cref{fig:Rotations}, 
we visualise the three types of subgroups of 
rotation that exist in the hyperbolic plane: elliptic subgroups move a 
point along a circular orbit that does not intersect the ideal 
boundary; the orbit of the same point under the action of a hyperbolic subgroup 
intersects the ideal boundary in two points. The orbit under a parabolic 
subgroup is a horocircle, i.e., touches the ideal boundary in precisely 
one point (in the half plane we choose the ideal point at infinity, hence
the orbit appears as a straight line).
\end{bem} 

\begin{figure}%
 \centering 
 \subfigure[][Poincar\'{e} disk]{%
  \includegraphics[width=4\columnwidth/10]{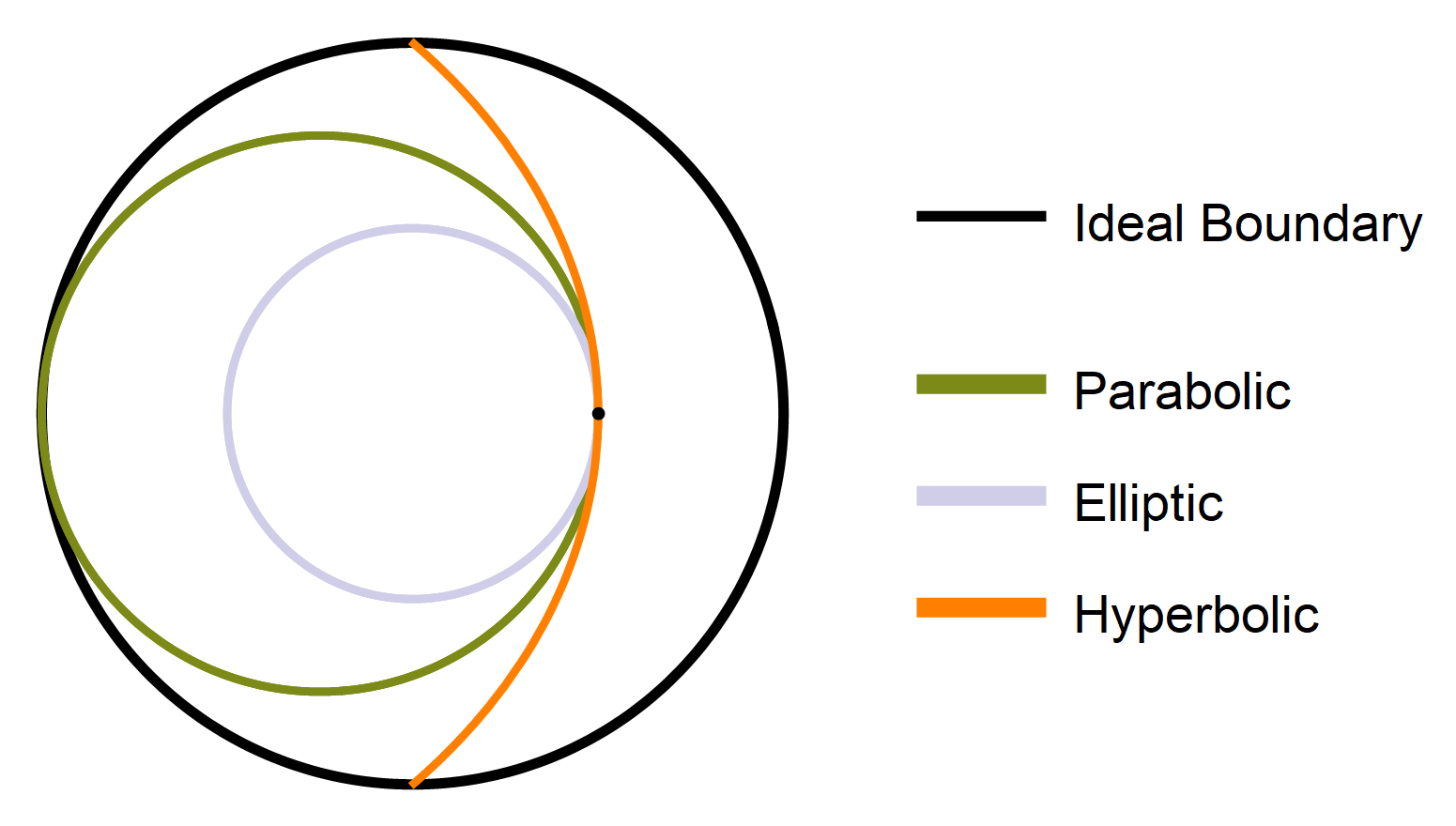}}%
 \hfill
 \subfigure[][Poincar\'{e} half plane]{%
  \includegraphics[width=4\columnwidth/10]{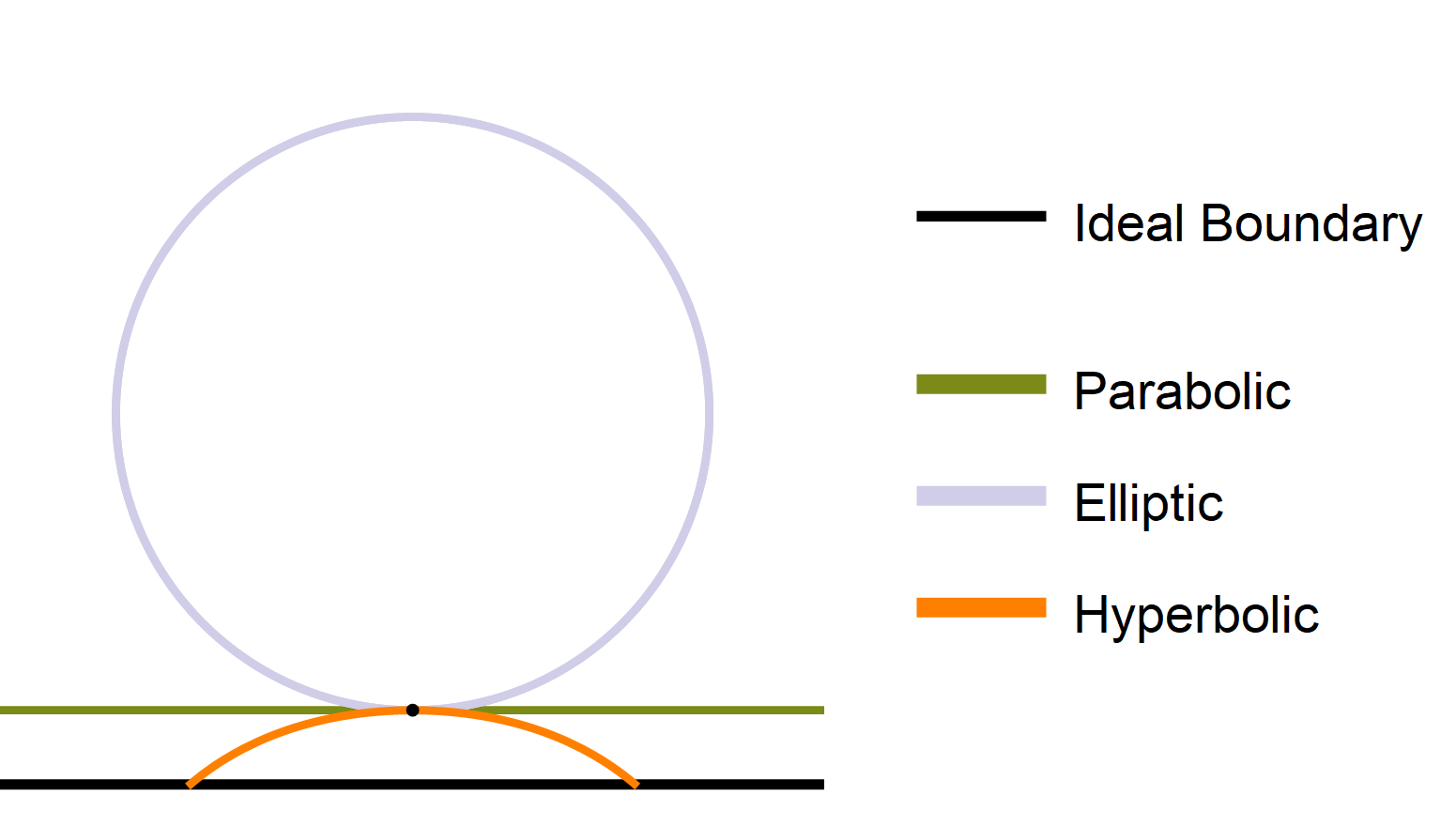}%
 }%
 \caption{Visualisation of subgroups of rotation in the hyperbolic 
  plane: The orbits of one point under all three types of subgroup 
  of rotation are shown in the Poincar\'{e} disk and the half plane 
  model.}
 \label{fig:Rotations}
\end{figure}

We call $\f:\Sigma^2 \to \Q$ a \emph{rotational surface} if there is 
a plane $\Pi$ so that $\f$ is invariant under a subgroup of rotations 
$\rho_1$ in $\Pi$.
We can parametrise
\begin{align*}
	\f(t, \theta) = \ro{\theta}{1}\c(t),
\end{align*}
with a \emph{profile curve} 
$\c=\f(\cdot,0): I \to \spn{\e_1}^\perp\cap \Q$ that is orthogonal to 
$\e_1$. The profile curve takes values in the sphere $\e_1+\p\in \T$, 
and is in this sense \emph{planar}. We employ the parametrisation 
\eqref{eq:MoutardOfF} below of rotational surfaces, see also
\cite[Sect 3.7.6]{hertrich-jeromin2003}.

We first consider the cases where $\v_1$ is non-isotropic, hence, 
$\kappa_1 \neq 0$. Let 
\begin{align*}
  \gamma(t) := \tfrac{\c(t)}{\kappa_1 r(t)},~\textrm{with}~
    r(t) = -\tfrac{(\c(t),\v_1)}{\kappa_1},
\end{align*}
denote the lift of the profile curve such that $(\gamma, \v_1)=-1$. Note that $r(t)$ is the $\v_1$-coordinate function of $\c$. For this 
new lift $\gamma$,  
\begin{align*}
  \gamma + \tfrac{1}{\kappa_1} \v_1 \in \Pi^\perp,
\end{align*}
is fixed by $\rho_1$ and we obtain the following Moutard lift of $\f$:
\begin{align}\label{eq:MoutardOfF}
  \begin{split}
    \m_\f(t, \theta) &= \gamma(t) + 
      \tfrac{1}{\kappa_1}\left(\v_1-\ro{\theta}{1}\v_1\right) \\
      &= \gamma(t)  + s_{\kappa_1}(\theta_1) \e_1+ \tfrac{1}{\kappa_1}\left(1- c_{\kappa_1}(\theta)\right)\v_1.
  \end{split}
\end{align} 

If, on the other hand, $\v_1$ is isotropic, we choose a lightlike 
$\v_2$ perpendicular to $\p, \q$ and $\e_1$ such that 
$(\v_1, \v_2) = -1$. Then we define
\begin{align*}
  \gamma(t) := \tfrac{\c(t)}{r(t)} ~\textrm{with}~r(t) = -(\c(t), \v_1),
\end{align*}
which yields again $(\gamma, \v_1) = -1$, but this time $r(t)$ is
the $\v_2$-coordinate function of $\c(t)$. A Moutard lift of $\f$ is then 
\begin{align}
  \begin{split}
    \m_\f(t, \theta) &= \rho_1(\theta) \v_2 + \gamma(t)- \v_2 \\
      &=\gamma(t) + \theta \e_1 + \tfrac{\theta^2}{2}\v_1.
  \end{split}
\end{align}
Note that this is indeed \eqref{eq:MoutardOfF} as $\tfrac{1}{\kappa_1}\left(1- c_{\kappa_1}(\theta)\right)$ converges to $\tfrac{\theta^2}{2}$ as $\kappa_1$ approaches $0$.

The tangent plane congruence $\t: \Sigma^2 \to \T$ of $\f$ is also 
invariant under the subgroup of rotations in $\Pi$, hence also has a 
Moutard lift $\m_\t(t, \theta) = \ro{\theta}{1} \v_1 + \nu(t)$ with 
a suitable $\nu: I\to \spn{\e_1}^\perp$. As with the surface $\f$, 
there is a function $\tilde{r}$ such that $\m_\t = \t/\tilde{r}$. 
The conditions $(\m_\t, \m_\f) = (\m_\t, (\m_\f)_t)=0$ and 
$(\m_\t, \m_\t)=0$ determine the map $\nu$. 

The curvatures of the surface are the curvatures of its lift $\f$ in 
$\R^{4,2}$ with respect to $\t$, which represents the Gauss map of $\f$ 
as a hypersurface in $\Q$, as was mentioned at the beginning of 
\Cref{sec:Surfaces}. To obtain the Moutard lifts $\m_\f$ and $\m_\t$, 
we rescaled by $r$ and $\tilde{r}$ respectively, hence
\begin{align}\label{eq:ParaGau}
 (\f_t, \f_t)=r^2((\m_\f)_t, (\m_\f)_t),~
 (\f_t,\t_t)=r\tilde{r}((\m_\f)_t,(\m_\t)_t), 
\end{align}
because $\f$ and $\t$ are lightlike. We will denote the speed of the 
profile curve of the Moutard lift $\m_\f$ by $v$, that is,
\begin{align*}
	v^2 =  ((\m_\f)_t, (\m_\f)_t).
\end{align*}
We proceed to provide formulas for the Gauss curvature of $\f$,
to be be used in \Cref{sec:4}. The cases where either $\kappa$ 
or $\kappa_1$ vanish need to be treated separately\footnote{If 
$\kappa = \kappa_1 =0$ then $\f$ is a cylinder in Euclidean space and 
therefore satisfies $K \equiv 0$.}.

\subsection{Non-isotropic cases}
First assume that $\kappa_1 \neq 0$ in a non-Euclidean space form 
($\kappa \neq 0$). Then $\R^{4,2}$ splits as
\begin{align*}
 \R^{4,2}=\spn{\p,\q} \oplus_\perp \Pi \oplus_\perp \spn{\v_2, \e_2},
\end{align*}
where we choose $\v_2, \e_2\in\R^{4,2}$ orthogonal such that $(\e_2, \e_2)=1$ 
and $(\v_2, \v_2)=\kappa_2$ with\footnote{Note that the causal 
character of $\v_2$ is determined by $\v_1$ and $\q$.} 
$|\kappa_1|=|\kappa_2|$. 

With suitable functions $A, B$ on $I$ and $R=\kappa_1/(\kappa r)$, we write
\begin{align*}
 \m_\f(t, \theta)&=\ro{\theta}{1}\v_1 +R(t)\q +A(t)\v_2 +B(t) \e_2, \\
 \m_\t(t, \theta) &= \ro{\theta}{1} \v_1 + \tilde{R}(t)\p - 
	\tfrac{\kappa_1 B'(t)}{\kappa_2(B'(t)A(t)-B(t)A'(t))}\v_2 + 
	\tfrac{\kappa_1 A'(t)}{B'(t)A(t)-B(t)A'(t)}\e_2,
\end{align*}
with $\tilde{R}=1/\tilde{r}$ satisfying
\begin{align}\label{eq:tildeR}
 \tilde{R}^2 = \frac{\kappa_1 v^2 \kappa R^2}{\kappa_2 (AB'-A'B)^2}.
\end{align}
The principal curvatures of $\m_\f$ with respect to $\m_\t$ are
\begin{align*}
 \widetilde{k}_1=\frac{((\m_\f)_t, (\m_\t)_t)}{((\m_\f)_t,(\m_\f)_t)}=
 \kappa_1\frac{A''B'-A'B''}{v^2(AB'-A'B)},\quad \widetilde{k}_2=
 \frac{((\m_\f)_\theta, (\m_\t)_\theta)}
   {((\m_\f)_\theta,(\m_\f)_\theta)}=
 \frac{\kappa_1}{\kappa_1}=1.
\end{align*}
Using \eqref{eq:ParaGau} and \eqref{eq:tildeR} we obtain
\begin{align*}
 k_1 =\kappa_1~\sqrt{\frac{\kappa\kappa_2}{\kappa_1}}~
 \frac{A''B'-A'B''}{v^3},\quad k_2 = 
 \sqrt{\frac{\kappa\kappa_2}{\kappa_1}}~\frac{AB'-A'B}{v},
\end{align*}
hence the Gauss curvature of $\f$ is
\begin{align*}
	K=\kappa\kappa_2\frac{(AB'-A'B)(A''B'-A'B'')}{v^4}.
\end{align*}

It is beneficial to rewrite this using \emph{polar coordinates}:
define $\Pi_2 := \spn{\v_2, \e_2}$ and denote the rotation in $\Pi_2$ 
as $\rho_2$, analogous to $\rho_1$. Since $\kappa_2 \neq 0$, we have
\begin{align*}
	\ro{\psi}{2}  \v_2= \kappa_2s_{\kappa_2}(\psi)\e_2 + 
c_{\kappa_2}(\psi) \v_2,
\end{align*}
hence we write 
\begin{align*}
	A(t)\v_2 + B(t) \e_2 = D(t) \ro{\psi(t)}{2} \v_2 
\end{align*}
with suitable functions $D$ and $\psi$ of $t$:
if $\v_2$ is spacelike $\Pi_2$ is a Euclidean plane and
 these are the usual polar coordinates;
for timelike $\v_2$ we have $\kappa_2 A^2 + B^2 = \kappa R^2 - \kappa_1< 0$
 showing that $A\v_2 + B\e_2$ is timelike and thus in the orbit of $D\v_2$
 for a suitably chosen function $D$.
In terms of these 
new coordinate functions, we obtain
\begin{align}\label{eq:MoutardOfFPolar}
	\m_\f(t,\theta)=\ro{\theta}{1} \v_1 +
  D(t) \ro{\psi(t)}{2} \v_2 + R(t) \q
\end{align}
and state the following lemma:

\begin{lem}\label{lem:PCPol}
For a rotational surface, given in terms of the Moutard lift 
\eqref{eq:MoutardOfFPolar}, its Gauss curvature is given by
\begin{align}\label{eq:PCPol}
\begin{split}
&K = \kappa\kappa_2 \psi'~\frac{\kappa\kappa_2 D^2 R(\psi'R''-\psi''R')
 -2\psi'(\kappa R R')^2-\psi'(\kappa_1\kappa R'^2 + 
 \kappa_2^3\psi'^2 D^4)}{v^4},\\
&\textrm{with} ~ v^2 = \frac{\kappa_1\kappa R'^2 + 
 \kappa_2^3\psi'^2 D^4}{\kappa_2 D^2}. 
\end{split}
\end{align}
\end{lem}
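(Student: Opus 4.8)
The plan is to take the parametrisation \eqref{eq:MoutardOfFPolar} as given and simply compute the quantities appearing in the curvature formulas derived just above for the Cartesian coordinate functions $A, B$, re-expressing everything in terms of the polar coordinate functions $D, \psi$. First I would record the substitution
\begin{align*}
  A\v_2 + B\e_2 = D\,\ro{\psi}{2}\v_2 = D\bigl(\kappa_2 s_{\kappa_2}(\psi)\e_2 + c_{\kappa_2}(\psi)\v_2\bigr),
\end{align*}
so that $A = D\,c_{\kappa_2}(\psi)$ and $B = \kappa_2 D\, s_{\kappa_2}(\psi)$, together with the identities $c_{\kappa_2}' = -\kappa_2 s_{\kappa_2}$, $s_{\kappa_2}' = c_{\kappa_2}$ and $c_{\kappa_2}^2 + \kappa_2 s_{\kappa_2}^2 = 1$ that follow from the definitions of $c_\lambda, s_\lambda$. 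With these, the combination $AB' - A'B$ and its derivative, as well as $A''B' - A'B''$, become polynomial expressions in $D, \psi$ and their derivatives after the trigonometric/hyperbolic terms collapse via the Pythagorean-type identity.

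The key auxiliary computations, in order, are: (i) $AB' - A'B = \kappa_2 D^2 \psi'$, which is the ``angular momentum'' of the planar curve and is where the identity $c^2 + \kappa_2 s^2 = 1$ does its work; (ii) from this, $v^2 = \frac{(\f_t,\f_t)}{r^2} = ((\m_\f)_t,(\m_\f)_t)$ computed directly from \eqref{eq:MoutardOfFPolar} using $(\ro{\psi}{2}\v_2, \ro{\psi}{2}\v_2) = \kappa_2$ and $(\q,\q) = -\kappa$, giving the stated $v^2 = \frac{\kappa_1\kappa R'^2 + \kappa_2^3\psi'^2 D^4}{\kappa_2 D^2}$ — here I also need to use the constraint $\kappa R^2 = \kappa_1 + \kappa_2 D^2$ coming from $(\m_\f,\m_\f) = 0$ to eliminate the $R^2$ term and the cross term, which is the one genuinely non-obvious bookkeeping step; (iii) the combination $(AB'-A'B)(A''B'-A'B'')$ appearing in the numerator of $K$, which after substituting (i) and differentiating becomes $\kappa_2 D^2 \psi' \cdot \frac{d}{dt}$-type terms that reorganise into the three groups $\kappa\kappa_2 D^2 R(\psi' R'' - \psi'' R')$, $-2\psi'(\kappa R R')^2$, and $-\psi'(\kappa_1\kappa R'^2 + \kappa_2^3\psi'^2 D^4)$ once the constraint is used again to trade $D, D'$ for $R, R'$.

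The main obstacle I anticipate is step (iii): keeping the constraint $\kappa R^2 - \kappa_2 D^2 = \kappa_1$ (and its differentiated form $\kappa R R' = \kappa_2 D D'$) consistently applied so that $D''$ and $D'$ are systematically eliminated in favour of $R$-data, while not introducing sign errors in the several places where the causal character of $\v_2$ (hence the sign of $\kappa_2$) enters. It is essentially a determined but lengthy algebraic reduction; since the Euclidean and the $\kappa_1 = 0$ cases were explicitly set aside, no limiting arguments are needed here, and the identity $|\kappa_1| = |\kappa_2|$ is available to simplify various prefactors. Once the numerator has been massaged into the displayed form and $v^2$ verified, substituting into $K = \kappa\kappa_2 \frac{(AB'-A'B)(A''B'-A'B'')}{v^4}$ yields \eqref{eq:PCPol} directly.
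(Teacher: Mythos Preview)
Your proposal is correct and follows precisely the route the paper takes (implicitly): the paper derives the Cartesian-coordinate formula $K=\kappa\kappa_2\frac{(AB'-A'B)(A''B'-A'B'')}{v^4}$, introduces polar coordinates $A=D\,c_{\kappa_2}(\psi)$, $B=\kappa_2 D\,s_{\kappa_2}(\psi)$, and then simply states \Cref{lem:PCPol} without writing out the reduction. Your plan---computing $AB'-A'B=\kappa_2 D^2\psi'$, using the lightlike constraint $\kappa R^2=\kappa_1+\kappa_2 D^2$ and its derivative $\kappa RR'=\kappa_2 DD'$ to trade $D'$ for $R'$ in $v^2$ and in the numerator---is exactly the omitted bookkeeping, and your identification of the constraint as the ``one genuinely non-obvious step'' is apt.
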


\begin{bem}
	The rotations $\rho_1$ and $\rho_2$ commute, hence changing the 
initial value of $\psi$ results in an isometry applied to $\f$ that 
does not change the curvature. This is reflected by the fact that 
only derivatives of $\psi$ appear in \eqref{eq:PCPol}.
\end{bem}

\subsection{Isotropic cases}
We now turn to the cases where either one of $\v_1$ or $\q$ is 
lightlike\footnote{If $\v_1$ and $\q$ are both lightlike, $\f$ is 
a cylinder in Euclidean space and thus $K=0$ and $\f$ is tubular.},
in other words, $\f$ 
is parabolic rotational in $\H^3$ or elliptic rotational in $\R^3$. 
Then, we choose $\v_2$ lightlike such that either $(\v_1, \v_2)=-1$ or 
$(\v_2, \q)=-1$. In either case $\spn{\v_1, \v_2, \p, \q}$ spans a 
$4$-dimensional subspace orthogonal to a spacelike plane. Let 
$\e_2$ denote a vector such that $\{\e_1, \e_2\}$ is an orthonormal 
basis of that plane.

\subsubsection{Parabolic rotational surfaces in \texorpdfstring{$\H^3$}{H3}}
Assume $\kappa_1 = 0$. With the notations of the previous section we obtain
\begin{align*}
 \m_\f(t, \theta) = \ro{\theta}{1} \v_2 + B(t)\e_2 + A(t)\v_1+ R(t)\q,
\end{align*}
as the Moutard lift of $\f$. The principal curvatures are
\begin{align*}
	k_1 =\sqrt{-\kappa}~\frac{B'A''-B''A'}{v^3}, 
	\quad k_2=\sqrt{-\kappa}~\frac{B'}{v},
\end{align*}
hence the Gauss curvature of $\f$ is
\begin{align*}
	K = -\kappa\frac{B'(B'A''-B''A')}{v^4}.
\end{align*}
As polar coordinates we use 
$\v_2 + B(t)\e_2+A(t)\v_1 = \ro{\psi(t)}{2}\v_2 + D(t)\v_1$, 
where $\rho_2$ denotes the parabolic rotation in $\spn{\v_1, \e_2}$. 
We arrive at the following parametrisation of the Moutard lift $\m_\f$
\begin{align}\label{eq:MoutardOfFPolarPara}
 \m_\f(t,\theta)=\ro{\psi(t)}{2}\ro{\theta}{1}\v_2 +D(t)\v_1 +R(t)\q,
\end{align}
and obtain the following lemma by a straightforward computation:

\begin{lem}\label{lem:GCParaRot}
For a rotational surface, given in terms of the Moutard lift 
\eqref{eq:MoutardOfFPolarPara}, its Gauss curvature is given by
\begin{align}\label{eq:GCParaRot}
\begin{split}
&K=\frac{\psi'}{\kappa}\frac{\kappa R(\psi'R''-\psi''R')-
	\psi'(\psi'^2-\kappa R'^2)}{v^4}, \\
&\textrm{with} ~ v^2:=((\m_\f)_t, (\m_\f)_t)=\psi'^2-\kappa R'^2.
\end{split}
\end{align}
\end{lem}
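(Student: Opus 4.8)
The plan is to carry out the direct computation sketched just before \Cref{lem:GCParaRot}, now in the ``polar'' coordinates of the Moutard lift \eqref{eq:MoutardOfFPolarPara}. The strategy mirrors exactly the one used for \Cref{lem:PCPol} in the non-isotropic case, so I would first recall what ingredients are needed: the induced metric $((\m_\f)_t,(\m_\f)_t)=v^2$ and $((\m_\f)_\theta,(\m_\f)_\theta)$, the second-fundamental-form-type quantities $((\m_\f)_t,(\m_\t)_t)$ and $((\m_\f)_\theta,(\m_\t)_\theta)$ with respect to the Moutard lift $\m_\t$ of the tangent plane congruence, and then plug into $K=k_1k_2$ via \eqref{eq:PrincipalCurvature} (equivalently via the ratios $\widetilde k_i$ of the $\m$-scaled quantities, corrected by the rescaling \eqref{eq:ParaGau}).

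First I would expand $\m_\f(t,\theta)=\ro{\psi(t)}{2}\ro{\theta}{1}\v_2+D(t)\v_1+R(t)\q$. Here one uses the structure equations for the parabolic rotations: $\ro{\theta}{1}\v_2=\v_2+\theta\e_2+\tfrac{\theta^2}{2}\v_1$ (since $\rho_1$ is parabolic in $\spn{\v_1,\e_2}$ with $\v_1$ the fixed lightlike direction), and likewise $\ro{\psi}{2}\v_2=\v_2+\psi\v_1+\ldots$ — wait, I must be careful which rotation fixes which isotropic line; from the set-up $\rho_2$ is the parabolic rotation in $\spn{\v_1,\e_2}$, so $\ro{\psi}{2}$ fixes $\v_1$ and moves $\v_2$. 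Writing out $\ro{\psi(t)}{2}\ro{\theta}{1}\v_2$ then gives an explicit polynomial expression in $\theta$ (degree $2$) with $t$-dependent coefficients built from $\psi,\psi',D,R$. Differentiating in $t$ and in $\theta$, and using the orthogonality relations $(\v_i,\v_i)=0$, $(\v_1,\v_2)=-1$, $(\e_2,\e_2)=1$, $(\q,\q)=\kappa$ together with $\v_1,\v_2,\e_2\perp\p,\q$, I would compute the four inner products. The $\theta$-derivative computations are the easiest since $\rho_1$ acts linearly and $(\m_\f)_\theta$ lies in a fixed small subspace; these reproduce $k_2=\sqrt{-\kappa}\,B'/v$ rewritten in polar form, i.e. contribute the factor $\psi'/\sqrt{-\kappa}$ up to signs. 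The $t$-derivative side is where the real bookkeeping lives: one gets $v^2=((\m_\f)_t,(\m_\f)_t)=\psi'^2-\kappa R'^2$ after cancellation (the $D$-terms drop because $\v_1$ is null and paired only with $\v_2$), and $((\m_\f)_t,(\m_\t)_t)$ produces the numerator $\kappa R(\psi'R''-\psi''R')-\psi'(\psi'^2-\kappa R'^2)$ after collecting terms.

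To pin down $\m_\t$ I would impose, as the text indicates, the three conditions $(\m_\t,\m_\f)=(\m_\t,(\m_\f)_t)=(\m_\t,\m_\t)=0$ on the ansatz $\m_\t(t,\theta)=\ro{\theta}{1}\v_1+\nu(t)$ — note $\ro{\theta}{1}\v_1=\v_1$ here since $\rho_1$ fixes $\v_1$, which simplifies matters considerably — and solve for the $\Pi_2^\perp$-components of $\nu$ in terms of $A,B$ (equivalently $D,\psi$) exactly as was done for the $\v_2,\e_2$ coefficients in the non-isotropic case. Then $k_1=((\m_\f)_t,(\m_\t)_t)/((\m_\f)_t,(\m_\f)_t)$ after the $r,\tilde r$ rescaling of \eqref{eq:ParaGau}, giving $k_1=\sqrt{-\kappa}\,(B'A''-B''A')/v^3$; converting $A,B$ to $D,\psi$ and multiplying $k_1k_2$ yields \eqref{eq:GCParaRot}.

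**Main obstacle.** No single step is conceptually hard — it is all ``a straightforward computation'', as the statement admits — but the bookkeeping in passing from the Cartesian coordinates $(A,B)$ to the parabolic polar coordinates $(D,\psi)$ is where errors creep in: one must correctly substitute $B(t)\e_2+A(t)\v_1+\v_2=\ro{\psi(t)}{2}\v_2+D(t)\v_1$, which forces $\psi=B$ and $D=A-\tfrac{1}{2}B^2$ (up to conventions), and then differentiate twice, so that $B'A''-B''A'$ must be re-expressed through $\psi',\psi'',D',D''$ and ultimately through $R$ using the relation that makes $D$ redundant. The other delicate point is tracking the sign of $\kappa=-(\q,\q)<0$ and the square roots $\sqrt{-\kappa}$ so that the final formula is manifestly real and matches \eqref{eq:PCPol} in the degeneration $\kappa_1\to0$; the cleanest way to guard against sign slips is to verify that \eqref{eq:GCParaRot} is the $\kappa_1\to0$, $\kappa_2\to$ (appropriate) limit of \eqref{eq:PCPol}, which the remark following \eqref{eq:MoutardOfF} already sets up, and which I would include as a consistency check at the end of the proof.
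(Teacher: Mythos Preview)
Your plan is correct and is exactly what the paper does: it derives the Cartesian formula $K=-\kappa\,B'(B'A''-B''A')/v^4$ in the text and then states that the lemma follows ``by a straightforward computation'' after the polar substitution $B=\psi$, $A=D+\tfrac12\psi^2$ together with the light-cone constraint $(\m_\f,\m_\f)=0$, i.e.\ $2A=B^2-\kappa R^2$ (this is the ``relation that makes $D$ redundant'' you allude to, and it is the only missing ingredient in your sketch). Two small slips to correct when you write it up: $\rho_1$ is parabolic in $\spn{\v_1,\e_1}$, not $\spn{\v_1,\e_2}$, so $\ro{\theta}{1}\v_2=\v_2+\theta\e_1+\tfrac{\theta^2}{2}\v_1$; and the ansatz $\m_\t=\ro{\theta}{1}\v_1+\nu(t)$ is the non-isotropic form and degenerates here to something $\theta$-independent---the correct Moutard lift of $\t$ in the parabolic case has the shape $\ro{\theta}{1}\v_2+\tilde\nu(t)$, parallel to $\m_\f$---but since you intend to take the Cartesian $k_1,k_2$ already stated in the text as input, this does not affect your argument.
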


\subsubsection{Surfaces of revolution in \texorpdfstring{$\R^3$}{R3}}
Lastly, for $(\q,\q)=0$, we choose $\v_2$ so that $(\q, \v_2) =-1$ 
and we may proceed similarly to the parabolic rotation case. 
Note, however, that in this case the roles of $A$ and $R$ are 
interchanged: the $\v_2$-coefficient of the space form lift $\f$ 
equals $1$, so $A=1/r$ if we parametrise $\m_\f$ as before. Also, 
$(\m_\t,\q)=0$ translates to the $\v_2$-part of the tangent plane 
congruence vanishing. 

The principal curvatures are now
\begin{align*}
k_1=\sqrt{\kappa_1}~\frac{A'B''-B'A''}{v^3},\quad k_2 = 
 \frac{1}{\sqrt{\kappa_1}}~\frac{BA'-AB'}{v}.
\end{align*}
The polar coordinates in $\spn{\v_2, \e_2}$ are given by parabolic 
rotations in $\spn{\q, \e_2}$. This results in 
\begin{align}\label{eq:MoutardOfFPolarEuclid}
\m_\f(t,\theta)=\ro{\theta}{1} \v_1 +R(t)\q +D(t)\ro{\psi(t)}{2}\v_2
\end{align}
as a Moutard lift, hence $B=D\psi$ and $A=D$.
We arrive at the expression for the Gauss 
curvature stated in the following lemma: 

\begin{lem}\label{lem:GCEuclid}
For a rotational surface, given in terms of the Moutard lift 
\eqref{eq:MoutardOfFPolarEuclid}, the Gauss curvature is given by
	\begin{align}\label{eq:GCEuclid}
		\begin{split}
		&K = \frac{D^2\psi'(D(\psi'D''-\psi''D')-2D'^2\psi')}{v^4}, \\
		&\textrm{with}~v^2 = \frac{\kappa_1 D'^2 + \psi'^2D^4}{D^2}.
		\end{split}
	\end{align}
\end{lem}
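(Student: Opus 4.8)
The plan is to follow the same computational template already used for \Cref{lem:PCPol} and \Cref{lem:GCParaRot}: compute the first and second fundamental form data of $\m_\f$ with respect to its tangent plane congruence $\m_\t$, read off the principal curvatures, and form $K=k_1k_2$. Since the surface-of-revolution case in $\R^3$ is the degenerate limit $\kappa\to 0$ of the non-isotropic setup (with the roles of $A$ and $R$ interchanged, as noted in the text), the starting point is the parametrisation \eqref{eq:MoutardOfFPolarEuclid} together with the already-recorded principal curvatures
\begin{align*}
 k_1=\sqrt{\kappa_1}~\frac{A'B''-B'A''}{v^3},\qquad
 k_2 = \tfrac{1}{\sqrt{\kappa_1}}~\frac{BA'-AB'}{v},
\end{align*}
so that $K=k_1k_2=\dfrac{(BA'-AB')(A'B''-B'A'')}{v^4}$.

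The second step is to substitute the polar coordinate expressions $A=D$ and $B=D\psi$ into this formula and into the speed. For the speed, $A\v_2+B\e_2=D\,\rho_2(\psi)\v_2$ with $\rho_2$ the parabolic rotation in $\spn{\q,\e_2}$; since $(\m_\f)_t = D'\rho_2(\psi)\v_2 + D\psi'\rho_2(\psi)'\v_2 + R'\q + (\text{terms from }\rho_1(\theta)\v_1\text{ that are lightlike and orthogonal to everything relevant})$, expanding $\rho_2(\theta)\v_2 = \v_2 + \theta\e_2 + \tfrac{\theta^2}{2}\q$ and using $(\e_2,\e_2)=1$, $(\q,\q)=0$, $(\v_2,\q)=-1$, $(\v_1,\v_1)=\kappa_1$ gives the stated $v^2 = \dfrac{\kappa_1 D'^2 + \psi'^2 D^4}{D^2}$. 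For the numerator: $BA'-AB' = D\psi\cdot D' - D(D'\psi+D\psi') = -D^2\psi'$, and $A'B''-B'A'' = D'(D\psi)'' - (D\psi)'D''$; expanding $(D\psi)'' = D''\psi + 2D'\psi' + D\psi''$ and $(D\psi)' = D'\psi + D\psi'$, the $\psi$-proportional terms cancel and one is left with $A'B''-B'A'' = 2D'^2\psi' + D'D\psi'' - D\psi'D'' = -\bigl(D(\psi'D''-\psi''D') - 2D'^2\psi'\bigr)$. Multiplying the two factors yields $K = \dfrac{D^2\psi'\,\bigl(D(\psi'D''-\psi''D') - 2D'^2\psi'\bigr)}{v^4}$, which is exactly \eqref{eq:GCEuclid}.

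A small point that needs care is the justification of the stated principal curvature formulas in this Euclidean case, i.e.\ that $(\m_\t,\q)=0$ forces the $\v_2$-component of $\m_\t$ to vanish and that the roles of $A$ and $R$ really are interchanged relative to the non-isotropic case; this is where one must be attentive, since here $\q$ is lightlike and the splitting $\R^{4,2}=\spn{\p,\q}\oplus_\perp\Pi\oplus_\perp\spn{\v_2,\e_2}$ behaves differently. Once the normalisation $A=1/r$ (rather than $R=\kappa_1/(\kappa r)$) is fixed and the conditions $(\m_\t,\m_\f)=(\m_\t,(\m_\f)_t)=(\m_\t,\m_\t)=0$ are solved for $\m_\t$ exactly as before, the computation of $k_1,k_2$ via $k_i = \dfrac{((\m_\f)_{x_i},(\m_\t)_{x_i})}{((\m_\f)_{x_i},(\m_\f)_{x_i})}$ in the curvature-line coordinates $(t,\theta)$ is routine. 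The main obstacle is thus not conceptual but purely bookkeeping: keeping track of the parabolic rotation expansions and the interchange of $A$ and $R$ so that the cancellations producing the clean form \eqref{eq:GCEuclid} come out correctly; everything else is a direct specialisation of the arguments already carried out for \Cref{lem:PCPol} and \Cref{lem:GCParaRot}.
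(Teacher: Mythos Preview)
Your proposal is correct and follows essentially the same approach as the paper: the paper itself does not give an explicit proof of this lemma but sets up the principal curvatures $k_1,k_2$ in terms of $A,B$ and the polar substitution $A=D$, $B=D\psi$ immediately before the statement, leaving the routine substitution to the reader. Your write-up carries out precisely that substitution (and the verification of $v^2$), so there is nothing to add.
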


\section{Channel linear Weingarten surfaces}\label{sec:3}
Channel surfaces can be characterised by a number of equivalent 
properties (see \cite{blaschke1929}). We give a definition in terms of
Legendre lifts. 

\begin{defi}\label{def:ChannelSurface}
	A surface is called a \emph{channel surface} if its Legendre lift 
$\Lambda$ envelops a $1$-parameter family of spheres $s$. 
\end{defi}

\begin{bem}\label{rem:ChannelCurvatureSphere}
	Since $s$ only depends on one parameter, it is a curvature sphere 
congruence of $\Lambda$. Given curvature parameters $(u,v)$, there 
exists a lift $\s$ of $s$ such that, wlog,
	\begin{align*}
		\s_v = 0.
	\end{align*}
	Take another lift $\tilde{\s}=\lambda \s$ of $s$, then 
$\tilde{\s}_v = \lambda_v \s$. Thereby, a channel surface has a 
curvature sphere congruence $s$ such that $\s_v \in\Gamma s$ for all 
lifts $\s\in\Gamma s$.
\end{bem}

Channel surfaces are examples of $\Omega_0$-surfaces \cite{pember2018}.
We now consider umbilic-free channel $\Omega$-surfaces, that is, 
Legendre maps that are channel and have an additional 
$\Omega$-structure. 

Let $s$ be a one parameter family of (curvature) spheres, enveloped by 
$\Lambda$, and denote the other curvature sphere congruence by $s_1$. As 
we stated in \Cref{prop:OmegaChar}, the curvature sphere congruences of an 
$\Omega$-surface $\Lambda$ admit lifts $\s_1, \s$ such that
\begin{align}\label{eq:ThisSect}
	(\s_1)_u = \phi_u \s, \quad \s_v = \varepsilon^2\phi_v \s_1. 
\end{align}
for curvature line coordinates $(u,v)$ and $\varepsilon\in\{i,1\}$. 
As noted in \Cref{rem:ChannelCurvatureSphere}, all lifts of $s$ 
satisfy $\s_v \in\Gamma s$. Together with \eqref{eq:ThisSect} this implies 
$\phi_v =0$ and $\phi=\phi(u)$ is a function of $u$ only. Furthermore, 
$(\s_1)_{uv} = 0$, so $\s_1$ and $\s$ are Moutard lifts of isothermic 
sphere congruences, as are all maps of the form 
\begin{align}\label{eq:MultibleMoutard}
	\s_1 + U(u) \s.
\end{align}

The original version of Vessiot's theorem \cite{vessiot1926} states 
that any channel isothermic surface in the conformal $3$-sphere
(the space of M\"{o}bius geometry) is either a surface of revolution,
a cylinder, or a cone in a suitably chosen Euclidean subgeometry. We 
will now prove the following Lie geometric version of this theorem. 

\begin{thm}\label{thm:Vessiot}
	A channel $\Omega$-surface is either a Dupin cyclide or
a cone, a cylinder, or a surface of revolution in a 
suitably chosen Euclidean subgeometry,
\end{thm}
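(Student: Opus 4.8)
The plan is to exploit the setup developed just before the statement: a channel $\Omega$-surface $\Lambda$ has curvature sphere congruences $s_1, s$ with lifts satisfying \eqref{eq:ThisSect}, where $\phi=\phi(u)$ depends only on $u$, and $\s_1, \s$ (and all $\s_1 + U(u)\s$) are Moutard lifts of isothermic sphere congruences. The strategy is to single out, among the pencil of isothermic sphere congruences $\spn{\s_1 + U(u)\s}$ together with $s$ itself, a linear sphere complex that is \emph{constant}, i.e.\ a fixed vector $\q$ in $\R^{4,2}$. Once we have such a constant complex, the surface — being enveloped by spheres each lying in it — is invariant, upon passing to the subgeometry determined by a compatible point sphere complex $\p$, under the isometry group fixing $\p$ and $\q$; analysing the causal type of $\q$ and of the orbit then yields the stated trichotomy (surface of revolution / cylinder / cone), with the Dupin cyclide case arising when the $\Omega$-structure degenerates further.

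Concretely, first I would work out when a vector of the form $\a(u) := \s_1(u,v_0) + U(u)\s(u,v_0)$ can be chosen to be $v$-independent and have constant direction in $\R^{4,2}$. Using $(\s_1)_u = \phi_u \s$, $\s_v = \varepsilon^2 \phi_v \s_1 = 0$ (since $\phi_v=0$), and $(\s_1)_v \in \Gamma s_1$ from \Cref{prop:OmegaChar}, one computes $\a_u = \phi_u \s + U_u \s + U (\s)_u$; since $\s$ depends on both $u$ and $v$ in general we must be careful, but the key point is that $\s_v \in \Gamma s$ means $s$ is a genuine $1$-parameter family and we may reparametrise so that $\s = \s(u)$ exactly, killing the $v$-dependence entirely. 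Then $\a_u = (\phi_u + U_u)\s + U \s_u$, and choosing $U$ so that $\a$ is a null or non-null \emph{constant} vector amounts to solving a first-order ODE; the existence of such a solution is guaranteed because $\s_1, \s, \s_u$ span at most a $3$-dimensional $u$-dependent family and the Moutard/isothermic structure forces the relevant Gram matrix to be degenerate in exactly the right way. I expect this is where one invokes that $\Lambda$ is an $\Omega$-surface rather than merely channel: the $\Omega$-condition is what makes the second curvature sphere also isothermic and hence the pencil \eqref{eq:MultibleMoutard} available as a source of conserved quantities.

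Next, having produced a constant vector $\q$ lying in the linear span of the enveloped sphere complexes, I would pick a point sphere complex $\p$ with $\p \perp \q$ (possible by a dimension count, and forced by the Legendre condition to be essentially canonical once one also requires $\p$ to be non-degenerate), thereby fixing a space form subgeometry $\Q$. In this subgeometry $\Lambda$ descends to a surface $\f$ all of whose curvature spheres $s$ lie in the complex $\spn{\q}$; such a surface is invariant under the $1$-parameter group of isometries fixing $\p$, $\q$ and the $u=\mathrm{const}$ leaves of the curvature net — this group acts as rotations/translations and the orbit of the profile curve sweeps out $\f$. The three cases correspond exactly to the causal character of $\q$ (respectively of the axis plane $\Pi$ from \Cref{sec:2}): a surface of revolution when $\Pi$ is elliptic, a cone or cylinder in the parabolic/degenerate cases (the classical Euclidean pictures), and when additionally $s_1$ is itself isothermic with constant complex — i.e.\ \emph{both} curvature spheres give conserved quantities — the surface has two families of spheres and is a Dupin cyclide.

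The main obstacle is the middle step: proving that a constant (conserved) linear sphere complex genuinely exists for every channel $\Omega$-surface, rather than just for channel surfaces with extra structure. The danger is circularity or an implicit genericity assumption; the honest route is to show directly that the $u$-dependent subspace $\spn{\s_1(u), \s(u)}^{\perp} \cap (\text{span of derivatives})$ stabilises to a fixed line. I would handle this by differentiating the Moutard relations for the pencil \eqref{eq:MultibleMoutard} in $v$ and using $\phi_v = 0$ to show that the $v$-derivative of any such Moutard lift is tangent to $\Lambda$, which pins down enough of the connection coefficients to integrate $\a_u \in \spn{\a}$ explicitly. A secondary subtlety is the degenerate/limiting behaviour (parabolic rotations, $\kappa_1 \to 0$), where the constant $\q$ becomes lightlike and "rotation" degenerates to "translation" — this is precisely the cone-versus-cylinder dichotomy, and it should be dispatched by the same isotropic-case bookkeeping already set up in \Cref{sec:2}, so no genuinely new argument is needed there.
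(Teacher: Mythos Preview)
Your central step---finding a constant vector $\q$ in the pencil $\spn{\s_1 + U(u)\s}$ by solving an ODE for $U$---does not work. With $\a = \s_1 + U\s$ you correctly compute $\a_u = (\phi_u + U_u)\s + U\s_u$. But $\s$ is the curvature sphere for the $v$-direction, not the $u$-direction: $\s_u$ has a nontrivial component \emph{outside} $\Lambda = \spn{\s_1, \s}$ (this is exactly the immersion condition for the Legendre map). Hence $\a_u \in \spn{\a}$ forces $U \equiv 0$ and then $\phi_u \equiv 0$, which is far too restrictive. No choice of $U$ rescues this, and the vague appeal to ``the Moutard/isothermic structure forces the relevant Gram matrix to be degenerate'' does not address the obstruction: the degeneracy you would need is precisely that $\s_u \in \Lambda$, which fails. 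Your later paragraph even flags this as ``the main obstacle'', but the proposed fix (differentiating the Moutard relations in $v$) only controls $v$-derivatives and cannot produce the required $u$-constancy.

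The paper takes a different and much more direct route. Rather than hunting for a single constant vector, it looks at the whole subspace $\Pi$ spanned by $\s$ and its $u$-derivatives, and observes that the key relation $(\s_1)_{uv}=0$ makes $\Pi$ orthogonal to $\Pi_1 := \spn{(\s_1)_v, (\s_1)_{vv}}$. Since $\Pi$ depends only on $u$ and $\Pi_1$ only on $v$, and $\Pi_1$ is $2$-dimensional, $\Pi$ is at most $4$-dimensional; if it drops to dimension $3$ somewhere the surface is a Dupin cyclide, and otherwise $\Pi$ is a \emph{constant} $4$-plane. One then chooses the point sphere complex $\p$ \emph{inside} $\Pi$ and analyses the resulting M\"obius picture: the M\"obius representative of the sphere curve $s$ lives in a fixed $3$-plane $\pi \subset \spn{\p}^\perp$, and the trichotomy cone/cylinder/surface of revolution is read off from how $\pi$ meets the light cone. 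So the ``conserved'' object is not a single vector but the whole $4$-plane $\Pi$ (equivalently its $2$-dimensional orthogonal complement $\Pi_1$), and this is what the $\Omega$-structure actually delivers.
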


\begin{proof}
Let $\Pi$ denote the subspace spanned by $\s$ and its $u$-derivatives. 
Then, because $(\s_1)_{uv}=0$, we get
\begin{align*}
	\Pi(u) \perp \spn{(\s_1)_v, (\s_1)_{vv}}=:\Pi_1(v).
\end{align*}
Since $\Pi_1$ is of dimension $2$ (for non-degeneracy we assume 
$(\s_1)_v$ to be spacelike), $\Pi$ is an at most $4$-dimensional subspace 
of $\R^{4,2}$ for every $u$. 

Assume $\Pi$ is $3$-dimensional at one point $u_0$. Then 
$\Pi(u_0)^\perp$ is a $(2,1)$-plane in which $\s_1$ takes its 
values. Thereby, $s_1$ is a curvature sphere congruence of a 
Dupin cyclide given by the splitting $\Pi(u_0)\oplus \Pi(u_0)^\perp$ 
(see, e.g., \cite[Def 4.4]{pember2018}).

Now assume $\Pi$ is never $3$-dimensional. Then, because $\Pi_1$ only 
depends on $v$ and $\Pi$ only on $u$, $\Pi$ is a constant 
$4$-dimensional space, including at least one timelike direction. 
Choose any point sphere complex $\p \in \Pi$ and consider the 
M\"{o}bius geometry modeled on $\p$. After projection onto 
$\spn{\p}^\perp$, $\Pi_1$ is unchanged, so the M\"{o}bius 
representative of $s$ moves in a $3$-space 
$\pi\subset \spn{\p}^\perp \cong \R^{4,1}$. The following three cases occur 
(for details see \cite[Sect 3.7.7]{hertrich-jeromin2003}).
\begin{itemize}
	\item If $\pi$ does not intersect the light cone 
$\L_\p\subset\spn{\p}^\perp$, then $\L_\p\cap \pi^\perp$ 
contains exactly two points which are then contained in all 
spheres of the enveloped sphere curve $s$. Map one of these 
points to infinity via stereographic projection to see that 
the envelope is a cone. 
	\item If $\pi$ touches $\L_\p$ in one point, then all 
spheres of $s$ touch in precisely this point. Upon a stereographic 
projection, $s$ consists of planes and the envelope becomes a 
cylinder. 
	\item If, finally, $\pi$ intersects $\L_\p$ in two points, 
then $\pi^\perp$ consists of all spheres that share a common 
circle $\gamma$. Accordingly, all spheres in $s$ are perpendicular 
to that circle. Under a stereographic projection that maps $\gamma$ 
to a straight line, $s$ becomes a curvature sphere congruence of a 
surface of revolution.
\end{itemize}%
\end{proof}

\begin{bem}\label{rem:Vessiot}
	If~$\Pi$ is three-dimensional, we can choose $\p$ so that $s$ 
consists of the spheres in an elliptic sphere pencil. Then the 
surface, that is, the point sphere congruence, degenerates to a 
circle and $s_1$ consists of point spheres, which furthers the 
analogy between \Cref{thm:Vessiot} and Vessiot's theorem: In the 
original theorem, it is stated that an isothermic channel surface 
is either rotational or its curvature lines are straight lines 
(i.e., the distinguished circles in the Euclidean subgeometry), 
whereas in \Cref{thm:Vessiot} the channel $\Omega$-surface is either 
isothermic or the curvature spheres are points (the distinguished 
spheres in the M\"{o}bius subgeometry).
\end{bem}

Now we turn to the main objects of interest for this paper: a channel 
linear Weingarten surface in a space form $\Q$ is an $\Omega$-surface 
with a pair of constant conserved quantities $\p^\pm$ spanning 
$\spn{\p, \q}$ (see \Cref{bem:ConservedQuantities}) such that one 
curvature sphere congruence is constant along the corresponding 
curvature direction (recall that we assume all linear Weingarten
surfaces to be non-tubular). Thus, let $\pounds=\spn{\s^+, \s^-}$ 
be linear Weingarten with conserved quantities $\p^\pm$ and let 
$\tilde{\p}\in \spn{\p^+, \p^-}$ be any linear sphere complex. 
Consider the enveloped sphere congruence $\tilde{s}$ given by the lift
\begin{align*}
	\tilde{\s}
	= \s_1 - \frac{(\s_1,\tilde{\p})}{(\s,\tilde{\p})}\s 
	= \frac{1}{(\s,\tilde{\p})}(
	 (\tilde{\p},\s)\s_1 - (\tilde{\p},\s_1)\s),
\end{align*}
where $s=\spn{\s}$ is the enveloped $1$-parameter family of curvature
spheres.
Clearly $(\tilde{\s},\tilde{\p})=0$, hence $\tilde{s}$ takes values 
in the sphere complex $\tilde{\p}$. As we saw in the proof of 
Proposition \ref{prop:OmegaChar}, the isothermic sphere congruences 
enveloped by $\Lambda$ are given as $\s^\pm = \s_1 \pm \varepsilon \s$ 
with lifts of the curvature spheres satisfying \eqref{eq:ThisSect}. Thereby 
we have $(\s_1,\p^\pm) = \mp\varepsilon(\s,\p^\pm)$, hence, 
$(\s_1,\p^\pm)_v=0$. It is now straightforward to see that 
$\tilde{\s}_{uv} =0$, hence $\tilde{\s}$ is a Moutard lift of 
$\tilde{s}$ (given in the form \eqref{eq:MultibleMoutard}). Since 
the plane spanned by $\p^\pm$ is also spanned by the space form 
vector and the point sphere complex of the space form, we have proved 
the following proposition.
 
\begin{prop}\label{prop:ClWIsIsothermic}
	Let $\pounds$ be a non-tubular channel linear Weingarten surface 
in a space form with point sphere complex $\p$ and space form vector 
$\q$. Then every sphere congruence that takes values in a linear 
sphere complex in $\spn{\p, \q}$ is isothermic. 
\end{prop}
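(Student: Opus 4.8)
The plan is to combine the $\Omega$-surface characterisation from \Cref{prop:OmegaChar} with the channel condition from \Cref{rem:ChannelCurvatureSphere}, essentially following the computation that precedes the statement but organising it cleanly. First I would fix curvature line coordinates $(u,v)$ for the channel $\Omega$-surface $\pounds$, let $s$ be the $1$-parameter curvature sphere congruence (depending only on, say, $u$) and $s_1$ the other curvature sphere congruence. By \Cref{prop:OmegaChar} there are lifts $\s_1,\s$ and a function $\phi$ with $(\s_1)_u = \phi_u\s$ and $\s_v = \varepsilon^2\phi_v\s_1$; since \Cref{rem:ChannelCurvatureSphere} forces $\s_v\in\Gamma s$, we get $\phi_v\equiv 0$, so $\phi=\phi(u)$, and consequently $\s_v=0$ and $(\s_1)_{uv}=0$. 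Thus $\s_1$ and $\s$ are themselves Moutard lifts, and so is every lift of the form $\s_1 + U(u)\s$ as in \eqref{eq:MultibleMoutard}.

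Next I would invoke \Cref{thm:LinearWeingartenInLie}: the linear sphere complexes $\p^\pm$ associated to the enveloped isothermic pair $\s^\pm = \s_1\pm\varepsilon\s$ span exactly the plane $\spn{\p,\q}$. The key relation to extract is $(\s_1,\p^\pm) = \mp\varepsilon(\s,\p^\pm)$, which comes from $(\s^\pm,\p^\pm)=0$ together with the harmonic separation; differentiating in $v$ and using $\s_v=0$ gives $(\s_1,\p^\pm)_v = 0$. Given an arbitrary linear sphere complex $\tilde\p\in\spn{\p,\q}=\spn{\p^+,\p^-}$, write $\tilde\p$ as a combination of $\p^+$ and $\p^-$; then the sphere congruence $\tilde s$ taking values in $\tilde\p$ that is enveloped by $\pounds$ is represented by the lift
\begin{align*}
	\tilde\s = (\tilde\p,\s)\,\s_1 - (\tilde\p,\s_1)\,\s,
\end{align*}
normalising by $(\s,\tilde\p)$ if desired. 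One checks $(\tilde\s,\tilde\p)=0$ directly, and differentiating twice, using $(\s_1)_{uv}=0$, $\s_v=0$, and the $v$-independence of $(\tilde\p,\s_1)$ and $(\tilde\p,\s)$ (the latter because $\tilde\p$ is a fixed combination of the $\p^\pm$), yields $\tilde\s_{uv}=0$. Hence $\tilde\s$ is a Moutard lift of $\tilde s$ in the form \eqref{eq:MultibleMoutard}, so $\tilde s$ is isothermic, which is the claim.

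I expect the main (minor) obstacle to be verifying that $\tilde s$ as defined is genuinely enveloped by $\pounds$ and that $\tilde\s$ is a well-defined, nonvanishing section — i.e. that $(\s,\tilde\p)$ does not vanish identically, which holds because otherwise $\tilde\p$ would be orthogonal to the whole curvature sphere $s$ and one could then argue $\tilde\p$ cannot lie in $\spn{\p,\q}$ in a way compatible with non-tubularity; this edge case should be dispatched by the general position built into \Cref{thm:LinearWeingartenInLie}. The other point requiring a little care is bookkeeping the $v$-derivatives: one must be sure that every factor appearing in $\tilde\s$ depends on $u$ only, which reduces to the already-established facts $\phi_v=0$ and $(\s_1,\p^\pm)_v=0$. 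Once those are in place, the computation $\tilde\s_{uv}=0$ is a one-line product-rule check, and the conclusion follows from the definition of an isothermic sphere congruence via the existence of a Moutard lift.
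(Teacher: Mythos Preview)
Your proposal is correct and follows essentially the same argument as the paper, which develops the proof in the paragraphs immediately preceding the proposition: the channel condition forces $\phi_v=0$ so that $(\s_1)_{uv}=0$, then \Cref{thm:LinearWeingartenInLie} gives $\spn{\p^+,\p^-}=\spn{\p,\q}$ and the relation $(\s_1,\p^\pm)_v=0$, from which the lift $\tilde\s=(\tilde\p,\s)\s_1-(\tilde\p,\s_1)\s$ is seen to satisfy $\tilde\s_{uv}=0$. Your discussion of the edge case $(\s,\tilde\p)=0$ is a welcome addition---note that this vanishing identically would force the principal curvature associated to $s$ to be constant, contradicting non-tubularity.
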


\begin{cor}\label{cor:ClWIsIsothermic}
	Non-tubular channel linear Weingarten surfaces in space forms 
(and their tangent plane congruences) are isothermic.
\end{cor}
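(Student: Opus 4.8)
The plan is to derive \Cref{cor:ClWIsIsothermic} directly from \Cref{prop:ClWIsIsothermic}, since the corollary is simply the two instances of the proposition obtained by choosing the two canonical linear sphere complexes in $\spn{\p,\q}$. First I would recall that the point sphere complex $\p$ itself lies in $\spn{\p,\q}$; hence, by \Cref{prop:ClWIsIsothermic}, the sphere congruence enveloped by $\pounds$ that takes values in $\p$ is isothermic. By \Cref{lem:UmiqueSurface}, this is precisely the point sphere congruence $\f$, so $\f$ is isothermic as a surface in the space form.

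Next I would treat the tangent plane congruence. The space form vector $\q$ also lies in $\spn{\p,\q}$, and by construction (see \Cref{sec:Surfaces}) the tangent plane congruence $\t$ is the enveloped sphere congruence taking values in the linear sphere complex $\q$: indeed, planes are exactly those spheres represented by null lines orthogonal to $\q$, so $\t$ takes values in $\T \subset \q^\perp$. Applying \Cref{prop:ClWIsIsothermic} with the choice $\tilde\p = \q$ then shows that $\t$ is isothermic as well. One small point to address is that $\q$ is not a \emph{timelike} vector in general (it is timelike only in $\S^3$), but \Cref{prop:ClWIsIsothermic} makes no timelikeness assumption on the linear sphere complex — it applies to any $\tilde\p \in \spn{\p,\q}$ — so no restriction on the space form is needed.

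I would then remark that ``isothermic'' for the surface $\f$ in the space form is the same as ``isothermic'' for its point sphere congruence in the M\"obius sense: away from umbilics, conformal curvature line parameters for $\f$ in $\Q$ correspond to curvature line parameters of $\pounds$ for which the point (resp.\ plane) sphere congruence admits a Moutard lift, and these are produced by the construction in the proof of \Cref{prop:ClWIsIsothermic} via the form \eqref{eq:MultibleMoutard}. Since the corollary follows by two direct specialisations of an already-proved proposition, there is essentially no obstacle here; the only thing requiring a word of care is the identification of $\f$ and $\t$ with the enveloped congruences valued in $\p$ and $\q$ respectively, which is exactly what \Cref{lem:UmiqueSurface} and the setup in \Cref{sec:Surfaces} provide.

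\begin{proof}
Both the point sphere complex $\p$ and the space form vector $\q$ are linear sphere complexes lying in $\spn{\p,\q}$. By \Cref{lem:UmiqueSurface}, the surface $\f$ is the point sphere congruence enveloped by $\pounds$, i.e.\ the enveloped sphere congruence taking values in $\p$; by \Cref{prop:ClWIsIsothermic} it is therefore isothermic. Likewise, the tangent plane congruence $\t:\Sigma^2\to\T$ takes values in $\q^\perp$, so it is the enveloped sphere congruence valued in the linear sphere complex $\q\in\spn{\p,\q}$, and \Cref{prop:ClWIsIsothermic} shows it is isothermic as well. The Moutard lifts of these congruences are obtained from the construction preceding \Cref{prop:ClWIsIsothermic} (of the form \eqref{eq:MultibleMoutard}), and the corresponding curvature line parameters $(u,v)$ are conformal for $\f$, so $\f$ is isothermic as a surface in its ambient space form.
\end{proof}
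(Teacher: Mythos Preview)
Your proposal is correct and follows exactly the approach the paper intends: the corollary is stated without proof precisely because it is the specialisation of \Cref{prop:ClWIsIsothermic} to the two linear sphere complexes $\p$ and $\q$ in $\spn{\p,\q}$, which single out the point sphere and tangent plane congruences respectively. Your extra care in identifying $\f$ and $\t$ with these enveloped congruences via \Cref{lem:UmiqueSurface} and the setup of \Cref{sec:Surfaces}, and in noting that no causal-type assumption on $\q$ is needed, is well placed but not something the paper spells out.
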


In \Cref{thm:Vessiot}, we have constructed a space form lift of a channel 
$\Omega$-surface in a suitably chosen Euclidean subgeometry that was 
invariant under a $1$-parameter family of Lie sphere transformations, namely,
translations, dilations, or rotations. For channel linear Weingarten 
surfaces in a space form, we wish to show that these Lie sphere
transformations are always rotations in the respective space form.

\begin{thm}\label{thm:CLWisRot}
	Every non-tubular channel linear Weingarten surface in a space 
form is a rotational surface. 
\end{thm}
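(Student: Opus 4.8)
The plan is to combine \Cref{cor:ClWIsIsothermic} with the constructions of \Cref{sec:2} and the conserved-quantity characterisation of linear Weingarten surfaces. Since a non-tubular channel linear Weingarten surface $\pounds = \spn{\s^+,\s^-}$ is isothermic by \Cref{cor:ClWIsIsothermic}, and since it is channel, we may start from the lifts $\s_1,\s$ of the curvature sphere congruences satisfying \eqref{eq:ThisSect} with $\phi=\phi(u)$ a function of $u$ alone (as established in the discussion preceding \Cref{thm:Vessiot}). First I would revisit the proof of \Cref{thm:Vessiot}: the subspace $\Pi$ spanned by $\s$ and its $u$-derivatives is constant and at most $4$-dimensional, and in the non-Dupin case it is exactly a constant $4$-space containing a timelike direction, with $\s_1$ taking values in the orthogonal complement $\Pi^\perp$, a $2$-plane (or $(2,0)$, $(1,1)$, $(2,1)$ plane depending on the case). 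The key new input is that, by \Cref{thm:LinearWeingartenInLie} and \Cref{bem:ConservedQuantities}, the plane $\spn{\p,\q}$ is spanned by the constant conserved quantities $\p^\pm$, and from $(\s_1,\p^\pm)=\mp\varepsilon(\s,\p^\pm)$ together with the structure equations one checks that both $\p^+$ and $\p^-$ lie in $\Pi$ (indeed $\tilde\s$ above, which takes values in any $\tilde\p\in\spn{\p^+,\p^-}$, is a linear combination $\s_1 + U(u)\s$, and a direct look at the equations forces $\tilde\p\perp\Pi_1$, i.e. $\tilde\p\in\Pi_1^\perp$; but $\Pi\subseteq\Pi_1^\perp$ and a dimension count pins down that the relevant directions sit inside $\Pi$). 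Hence $\spn{\p,\q}\subset\Pi$, so the Euclidean (or Möbius) subgeometry singled out in \Cref{thm:Vessiot} can be taken compatibly with the given space form structure.

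Next I would argue that the $1$-parameter family of Lie sphere transformations under which the surface is invariant — the orbit structure appearing implicitly in the parametrisations $\s(u,\cdot)$ inside the constant $4$-space $\Pi$ — actually fixes $\p$ and $\q$, hence is a subgroup of rotations $\rho_1$ of $\Q$ in the sense of \Cref{sec:2}. Concretely: the sphere curve $u\mapsto s(u)$ together with $s_1$ determines, via the splitting $\Pi\oplus\Pi^\perp$, a $1$-parameter group acting trivially on $\Pi^\perp$; since $\p,\q\in\Pi$ is false in general but $\spn{\p,\q}$ meets $\Pi$ in a subspace that we just showed contains enough of $\p,\q$ — more carefully, one shows $\Pi^\perp\subseteq\spn{\p,\q}^\perp$ is the wrong containment, so instead I would show $\spn{\p,\q}\cap\Pi^\perp$ has the right dimension so that the generating infinitesimal transformation annihilates $\p$ and $\q$. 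Once the generating transformation is in $\mathfrak{iso}_{\p,\q}(3)$ and acts trivially on a complement of a $2$-plane $\Pi'\perp\spn{\p,\q}$, it is by definition an elliptic, parabolic or hyperbolic subgroup of rotations, and the surface, being an orbit of the profile curve $\c=\f(\cdot,0)$ under this subgroup, is a rotational surface in the precise sense of \Cref{sec:2}.

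The main obstacle I expect is the bookkeeping with signatures and dimensions: showing that the constant $4$-space $\Pi$ contains the space-form plane $\spn{\p,\q}$ (equivalently, that the conserved quantities $\p^\pm$ lie in $\Pi$) in \emph{all} the cases — spherical, Euclidean, hyperbolic, and across the elliptic/parabolic/hyperbolic subdivision of rotations — rather than merely in some privileged Euclidean model. This is exactly the step where \Cref{thm:Vessiot} gives a Euclidean subgeometry that a priori need not be adapted to the given $\Q$, and the extra rigidity coming from the two conserved quantities $\p^\pm$ (not available for a general channel isothermic surface) is what upgrades ``rotational in some Euclidean subgeometry'' to ``rotational in the given space form''. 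I would handle it by working invariantly: the conserved quantities force $\spn{\p,\q}\subseteq\Pi$, the complementary $2$-plane $\Pi^\perp\cap\spn{\p,\q}^\perp$ is then the rotation plane $\Pi'$ of \Cref{sec:2} after identifying the remaining orthogonal directions, and the causal type of $\Pi'$ reproduces precisely the elliptic/parabolic/hyperbolic trichotomy — with parabolic occurring only when $\q$ is lightlike or appropriately degenerate, matching the remarks in \Cref{sec:2}. The Dupin cyclide case of \Cref{thm:Vessiot} is excluded here because a Dupin cyclide has one constant principal curvature and is therefore tubular, contrary to our standing assumption.
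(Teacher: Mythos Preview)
Your overall strategy coincides with the paper's: rule out the Dupin case as tubular, then use the conserved quantities $\p^\pm$ to show that the rotation acting in the constant $2$-plane $\Pi_1=\spn{(\s_1)_v,(\s_1)_{vv}}$ fixes $\p$ and $\q$, hence lies in $\mathrm{Iso}_{\p,\q}(3)$. But the crucial step --- establishing $\Pi_1\perp\spn{\p,\q}$ (equivalently $\spn{\p,\q}\subset\Pi$) --- is never actually carried out in your proposal. You assert it in the first paragraph via an inconclusive dimension count, then in the second paragraph write ``$\p,\q\in\Pi$ is false in general'' and call $\Pi^\perp\subseteq\spn{\p,\q}^\perp$ ``the wrong containment'', when in fact that containment is exactly what you need and is true. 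The third paragraph concedes this is the main obstacle without resolving it.

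The observation you are circling is a one-liner. You already recorded $(\s_1,\p^\pm)=\mp\varepsilon(\s,\p^\pm)$ with $\s$ depending on $u$ alone and $\p^\pm$ constant; differentiating in $v$ gives $((\s_1)_v,\p^\pm)=0$, and once more $((\s_1)_{vv},\p^\pm)=0$. Since $\spn{\p^+,\p^-}=\spn{\p,\q}$, this is precisely $\Pi_1\perp\spn{\p,\q}$, so rotations in $\Pi_1$ are isometries of $\Q$. That is the paper's entire argument --- no need to locate $\p^\pm$ inside $\Pi$ by indirect means, no signature bookkeeping, and no case split over elliptic/parabolic/hyperbolic rotations: that trichotomy is read off from the signature of $\Pi_1$ \emph{after} the theorem is proved, not as part of proving it.
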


\begin{proof}
	First, as stated in \Cref{rem:Vessiot}, if $\Pi$ is 
$3$-dimensional at any one point then $\pounds$ is a Dupin cyclide. 
However, linear Weingarten Dupin cyclides are always tubular. Thus, 
for a non-tubular channel linear Weingarten surface, $\Pi$ is constant 
and $4$-dimensional. 
	
	The sphere curve $s$ is invariant under rotations in the plane 
$\Pi_1$. Since $(\s_1)_v$ (hence $(\s_1)_{vv}$) is perpendicular to 
$\spn{\p^+, \p^-}=\spn{\p,\q}$, we have that $\p, \q \perp \Pi_1$ and thus 
the rotations in $\Pi_1$ are indeed isometries of the considered space form.
\end{proof}

\begin{bem}
 It should be noted that the isometries that appear in the Euclidean 
case may be translations. However, the resulting surfaces, 
cylinders, are tubular. Consequently, non-tubular linear Weingarten 
channel surfaces in that case are always surfaces of revolution, as 
stated in \cite[Thm 2.1]{hertrich-jeromin2015}.
\end{bem}

\section{Constant Gau{ss} curvature}\label{sec:4}
Let $\f:\Sigma \to \Q$ be a channel surface with constant Gauss 
curvature; then it is a rotational surface by \Cref{thm:CLWisRot}. Let 
$\Pi_1$ denote the corresponding plane of rotations. We computed 
the Gauss curvature $K$ of a rotational surface in \Cref{sec:2}, and 
we will now use the fact that $K$ is constant to obtain a differential equation
that determines the coordinate functions of $\f$. 

As in \Cref{sec:2}, we will also split this section into two 
subsections, attending to the non-isotropic and isotropic cases separately.
In either subsection we will obtain a system of differential equations
by choosing 
an appropriate parametrisation for the profile curve of the rotational surface.

\subsection{Non-isotropic cases}\label{sec:NonIsoCases}
Let $\f$ be a surface of non-parabolic rotation in a non-Euclidean space
 form. Then $\R^{4,2}$ orthogonally splits into the $\p, \q$-plane, 
the rotation plane $\Pi_1 = \spn{\v_1, \e_1}$ and the orthogonal 
complement plane $\Pi_2=\spn{\v_2, \e_2}$. We have 
\begin{align}\label{eq:MultiTable}
	\begin{array}{c|cccccc}
		(~,~) &\p &\q &\v_1 &\e_1 &\v_2 &\e_2 \cr
		\hline
		\p &-1 &0 &0 &0 &0 &0 \cr
		\q &0 &-\kappa &0 &0 &0 &0 \cr
		\v_1 &0 &0 &\kappa_1 &0 &0 &0 \cr
		\e_1 &0 &0 &0 &1 &0 &0 \cr
		\v_2 &0 &0 &0 &0 &\kappa_2 &0 \cr
		\e_2 &0 &0 &0 &0 &0 &1
	\end{array}
\end{align}
where we choose $\v_2$ such that $|\kappa_2|=|\kappa_1|$ (note that 
$\kappa\kappa_1\kappa_2 >0$). We denote the rotation in the plane 
$\Pi_i$ by $\rho_i$ and write
\begin{align}\label{eq:fParam}
	\f(t, \theta) = 
r(t)\ro{\theta}{1}\v_1 +d(t)\ro{\psi(t)}{2} \v_2 +\tfrac{1}{\kappa}\q,
\end{align}
where we have used polar coordinates in $\Pi_2$. Note at this point that 
we are still free to chose the speed of the profile curve.

\begin{prop}\label{prop:NonIsoProp}
	Let $\Q$ be a non-Euclidean space form and let 
$\f:\Sigma^2 \to \Q$ be a CGC $K\neq 0$ 
surface of non-parabolic rotation, parametrised as in 
\eqref{eq:fParam}.
	
	Then, with a suitable choice of speed for the profile curve, 
the coordinate functions $r, \psi$ and $h$ satisfy
\begin{align}
r'^2&=\tfrac{1}{\kappa\kappa_1\kappa_2}\left((1-C)+\kappa_1 Kr^2\right)
\left(C-\kappa_1(K+\kappa)r^2\right), \label{eq:JEE} \\
\psi'&=\tfrac{\kappa_1 K r^2 + 1 - C}{\kappa_2(1-\kappa \kappa_1 r^2)} ,
\label{eq:JEEpsi}\\
d^2&=\tfrac{1-\kappa\kappa_1 r^2}{\kappa\kappa_2}, \label{eq:JEEd}
\end{align}
	where $C$ denotes a suitable constant.
\end{prop}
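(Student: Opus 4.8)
The strategy is to feed the parametrisation \eqref{eq:fParam} into the curvature formula \eqref{eq:PCPol} of \Cref{lem:PCPol} and exploit that $K$ is constant, while judiciously fixing the still-free reparametrisation of the profile curve to turn the resulting second-order ODE into the first-order system \eqref{eq:JEE}--\eqref{eq:JEEd}. First I would record how \eqref{eq:fParam} relates to the Moutard parametrisation \eqref{eq:MoutardOfFPolar}: the space form lift $\f$ differs from its Moutard lift $\m_\f$ by the factor $r$ of \Cref{sec:2}, so comparing $\theta$- and $\psi$-parts gives $D = d/r$ and $R = 1/(\kappa r)$ (consistently with $R = \kappa_1/(\kappa r)$ up to the normalisation $(\gamma,\v_1)=-1$ absorbed into $D$), and I would also note the constraint coming from $(\f,\f)=0$, namely $\kappa_1 r^2 + \kappa_2 d^2 + \tfrac1\kappa = 0$, i.e. $\kappa_2 d^2 = -\tfrac1\kappa - \kappa_1 r^2$; after rescaling $d$ against $D$ this is exactly \eqref{eq:JEEd}. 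So \eqref{eq:JEEd} is essentially the lightlike normalisation of $\f$ and requires no analysis.

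Next I would substitute $R$ and $D$ (expressed through $r$, $\psi$) into the two displayed identities of \Cref{lem:PCPol}: the expression for $v^2$ and the expression for $K$. The $v^2$ formula becomes an algebraic relation between $v^2$, $r'^2$, $\psi'^2$ and $r$; this is where I now \emph{use the freedom in the speed of the profile curve}. The cleanest choice is to declare the profile curve to be parametrised so that $v^2$ equals a prescribed function of $r$ (for instance, proportional to $1-\kappa\kappa_1 r^2$, matching the denominator appearing in \eqref{eq:JEEpsi}); this converts the $v^2$-identity into an expression for $\psi'$ purely in terms of $r$ and $r'^2$. Then the constancy $K\equiv\text{const}$ turns the $K$-formula of \eqref{eq:PCPol} into a single relation among $r$, $r'$, $r''$, $\psi'$, $\psi''$; eliminating $\psi$ and its derivatives using the $\psi'$-expression just obtained yields a second-order ODE for $r$ alone. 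The standard trick — multiply by an integrating factor and integrate once — produces a first integral with an integration constant, which I would call $C$; organising it correctly gives the factored quartic right-hand side of \eqref{eq:JEE}, and back-substituting the first integral into the $\psi'$-expression gives \eqref{eq:JEEpsi}.

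The main obstacle is purely computational bookkeeping: \eqref{eq:PCPol} is a rational expression of degree four in $v$ with several competing terms ($\kappa\kappa_2 D^2 R(\psi'R''-\psi''R')$, $(\kappa R R')^2$, $\kappa_1\kappa R'^2$, $\kappa_2^3\psi'^2 D^4$), and one must see that, after imposing the speed normalisation and eliminating $\psi$, everything collapses into a perfect first-derivative so that the single integration producing $C$ actually works. I expect the key algebraic miracle to be that the $\psi$-dependent part of $K$ is itself a total $t$-derivative once $v^2$ is fixed as above — equivalently, that $\psi'(1-\kappa\kappa_1 r^2)$ is (up to constants and the chosen speed) linear in a first integral of $r$, which is precisely the content of \eqref{eq:JEEpsi} combined with \eqref{eq:JEE}. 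I would verify the factorisation of the quartic in \eqref{eq:JEE} by checking that its two quadratic factors $\big((1-C)+\kappa_1 K r^2\big)$ and $\big(C-\kappa_1(K+\kappa)r^2\big)$ multiply out to match the integrated ODE, and confirm consistency of signs using $\kappa\kappa_1\kappa_2>0$ and $\kappa_2 d^2 = -\tfrac1\kappa-\kappa_1 r^2$. Finally I would remark that the choice of speed is legitimate precisely because \eqref{eq:fParam} left it unspecified, and that a different admissible choice would only rescale $t$, leaving the system \eqref{eq:JEE}--\eqref{eq:JEEd} intact up to that reparametrisation.
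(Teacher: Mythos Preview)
Your overall plan is sound and would eventually succeed, but the order of operations is the reverse of what the paper does, and that reversal is where the ``computational bookkeeping'' you anticipate would bite hardest. The paper does \emph{not} fix the speed first and then integrate a second-order ODE for $r$. Instead it introduces the reparametrisation-invariant ratio
\[
g \;=\; \frac{\kappa\kappa_1 R'^2}{\kappa\kappa_1 R'^2+\kappa_2^3\psi'^2 D^4}
\]
and observes that the formula \eqref{eq:PCPol}, rewritten, says precisely
\[
K+\kappa \;=\; \frac{\kappa_2 D^2 R}{2\kappa_1 R'}(\kappa g)'+\kappa g,
\]
which is a linear first-order ODE for $g$ as a function of $R$ (no speed choice needed, since $g$ is a ratio of squares of first derivatives). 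This integrates in one line to $\kappa g=\widetilde{C}R^2/(\kappa_2D^2)+(K+\kappa)$, producing the integration constant \emph{before} any parametrisation is fixed. Only then is the speed chosen, and the correct choice $v^2=\tfrac{1}{\kappa\kappa_2}\big(\kappa_1K-(\kappa K+\widetilde{C})R^2\big)$ already contains the constant; this simultaneously yields \eqref{eq:JEE} and \eqref{eq:JEEpsi} after changing variables $R\leadsto r$ and $\widetilde{C}\leadsto C$.

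Your route---fix $v^2$ as, say, a multiple of $1-\kappa\kappa_1 r^2$, solve for $\psi'$ in terms of $r,r'^2$, feed into \eqref{eq:PCPol}, reduce to a second-order ODE for $r$, find an integrating factor---can be pushed through, but two points deserve care. First, the speed that actually produces the \emph{stated} system involves $C$, so a speed chosen a priori will give a different first-order system related to \eqref{eq:JEE}--\eqref{eq:JEEpsi} by a further $C$-dependent reparametrisation; your remark that ``a different admissible choice would only rescale $t$'' glosses over the fact that this rescaling is by a function of $t$ and changes the form of the equations. Second, your back-substitution recovers $\psi'^2$ (not $\psi'$) as a rational function of $r$, and one must still check that this is a perfect square; with the paper's $g$-trick this is automatic, whereas with a generic speed choice it is an extra verification. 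Your derivation of \eqref{eq:JEEd} from the lightlike condition is exactly what the paper does.
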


\begin{proof}
With the Moutard lift 
\begin{align*}
	\m_\f(t, \theta) = 
	\ro{\theta}{1}\v_1 + D(t) \ro{\psi(t)}{2}\v_2 + R(t) \q, 
\end{align*}
we have the following expression of the Gau{ss} curvature in terms 
of $R, D$ and $\psi$
\begin{align}\label{eq:KComplicated}
K=\kappa\kappa_2^3 \psi' D^4~
	\tfrac{\kappa\kappa_2 D^2 R(R''\psi'-R'\psi'')-
	2\psi'(\kappa R R')^2}{(\kappa_1\kappa R'^2 + 
	\kappa_2^3\psi'^2 D^4)^2}-
	\tfrac{\kappa\kappa_2^3\psi'^2 D^4}{\kappa_1\kappa R'^2 + 
	\kappa_2^3\psi'^2 D^4}, 
\end{align}
which is a simple reformulation of \eqref{eq:PCPol} in \Cref{lem:PCPol}. 
We will use the fact that $K\equiv const$ to obtain a differential 
equation for $R=\tfrac{1}{\kappa r}$, which will then yield the differential 
equation for $r$. 

Define 
\begin{align*}
 g(t) :
 = \tfrac{\kappa \kappa_1 R'^2(t)}
   {\kappa\kappa_1 R'^2(t)+\kappa_2^3\psi'^2(t)D^4(t)}.
\end{align*}
Then \eqref{eq:KComplicated} yields
\begin{align*}
 K+\kappa=\tfrac{\kappa_2D^2 R}{2\kappa_1 R'}(\kappa g)'+\kappa g,
\end{align*}
which implies
\begin{align}\label{eq:g}
	\kappa g =\tfrac{\widetilde{C}R^2}{\kappa_2 D^2} + (K+\kappa).
\end{align}
Since $0\leq g \leq 1$, we have that
\begin{align}\label{eq:Bounds} 
	 D^2 \geq \tfrac{1}{\kappa\kappa_2}\left(\kappa_1 K - 
	(\kappa K+\widetilde{C})R^2\right) \geq 0.
\end{align}
Therefore, we obtain the elliptic differential equation 
\begin{align}\label{eq:EllipticR}
	\kappa^2 R'^2
	= \tfrac{1}{\kappa \kappa_1 \kappa_2}
	  \left( \kappa_1 K - (\kappa K+\widetilde{C})R^2 \right) 
          \left( R^2(\widetilde{C}+\kappa K+\kappa^2)
	   - \kappa_1(K+\kappa)
	   \right)
\end{align}
for the function $R$ from \eqref{eq:g} by reparameterising 
the $t$-coordinate so that the speed of the profile curve 
$\m_\f(t,0)$ (given in \Cref{lem:PCPol}) satisfies
\begin{align*}
	v^2 = \tfrac{1}{\kappa\kappa_2}\left(\kappa_1 K - 
(\kappa K+\widetilde{C})R^2\right);
\end{align*}
hence setting
\begin{align}\label{eq:Psi}
	\psi' = 
\tfrac{\kappa_1 K -(\kappa K+\widetilde{C})R^2}{\kappa\kappa_2 
(\kappa R^2 - \kappa_1)}, 
\end{align}
without loss of generality (note that
$\kappa R^2 - \kappa_1= \kappa_2 D^2 \neq 0$). Rewriting this 
and \eqref{eq:EllipticR} in terms of $r$, we obtain 
\eqref{eq:JEE} and \eqref{eq:JEEpsi}, for 
$C = \tfrac{1}{\kappa^2}(\widetilde{C}+\kappa K + \kappa^2)$. 
The equation for $d$ follows from the fact that $\f$ takes values 
in the light cone. This completes the proof. 
\end{proof}

Note that the constant $C$, as defined in the last proof, satisfies
\begin{align}\label{eq:BoundsD}
\tfrac{\kappa\kappa_1}{\kappa_2}(K+\kappa)r^2 \leq 
\tfrac{\kappa C}{\kappa_2}\leq 
\tfrac{\kappa}{\kappa_2}+\tfrac{\kappa\kappa_1}{\kappa_2} K r^2,
\end{align}
because of \eqref{eq:Bounds}. This imposes bounds on the sign of 
$C$ under certain circumstances: say
$\tfrac{\kappa_1 (K + \kappa)}{\kappa_2}$ is non-negative, then 
$\tfrac{\kappa}{\kappa_2}C<0$ would force $r$ to be imaginary. Similarly, 
if $\tfrac{\kappa_1 \kappa K}{\kappa_2}$ were non-positive, $r$ would be
imaginary as soon as $\tfrac{\kappa}{\kappa_2}(C-1) >0$. Hence, in 
our pursuit of real solutions, we investigate three cases:
\begin{itemize}
\item the intrinsic Gauss curvature $K + \kappa$ is positive and 
$\tfrac{\kappa}{\kappa_2}C\geq 0$;
\item the extrinsic Gauss curvature $K$ is negative and 
$\tfrac{\kappa}{\kappa_2}(C-1) \leq 0$;
\item the remaining cases, where $K+\kappa\leq 0 \leq K$ and 
$C\in \R$.
\end{itemize}
The first two of these cases might overlap and the last may not occur 
(for instance if $\kappa =1$). 

\subsubsection{Positive intrinsic Gauss curvature.}\label{punkt:PosCurv}
We start our analysis with the assumption $K+\kappa > 0$. If $C=0$ 
then $r\equiv 0$, hence assume $\tfrac{\kappa C}{\kappa_2}>0$. Then 
\begin{align*}
	y=\sqrt{\tfrac{\kappa_1 (K+\kappa)}{C}}~r
\end{align*}
is a real function and \eqref{eq:JEE} takes the form
\begin{align*}
 y'^2 = \tfrac{K+\kappa - \kappa C}{\kappa\kappa_2}
  \left(1-y^2\right) \left(q^2+p^2 y^2\right)
   ~\textrm{with}~
 p^2 = \tfrac{KC}{K+\kappa - \kappa C}
   ~\textrm{and}~
 p^2+q^2 =1, 
\end{align*}
which leads to 
\begin{align*}
	y(s) = \jac{cn}{p}\left(
	\sqrt{\tfrac{K+\kappa -\kappa C}{\kappa \kappa_2}}~s\right), 
\end{align*}
and subsequently
\begin{align}\label{eq:rSolGaussPos}
	r(s) = \sqrt{\tfrac{C}{\kappa_1(K +\kappa)}}~\jac{cn}{p}
\left(\sqrt{\tfrac{K+\kappa -\kappa C}{\kappa \kappa_2}}~s\right),
\end{align}
where $\jac{cn}{p}$ denotes a Jacobi elliptic function with modulus 
$p$. If $p\notin [0,1]$, a Jacobi transformation may be applied to 
express $r$ by another Jacobi elliptic function with modulus
$\tilde p\in[0,1]$.
For an overview of the Jacobi elliptic 
functions and their Jacobi transformations, see \Cref{app:JacobiFunctions} or 
\Cref{sec:5} for examples.

To obtain $\psi$ we need to integrate \eqref{eq:JEEpsi}, which may be 
rewritten as
\begin{align*}
 \psi'(s) = -\tfrac{K}{\kappa\kappa_2} + 
\tfrac{K+\kappa}{\kappa \kappa_2} 
\left(1+\tfrac{\kappa C}{K+\kappa -\kappa C}
\jac{sn}{p}^2\left(
\sqrt{\tfrac{K+\kappa -\kappa C}{\kappa\kappa_2}}~s\right)\right)^{-1}.
\end{align*}
We can then express $\psi$ as
\begin{align}\label{eq:psiSolGaussPos}
 \psi(s) = -\tfrac{K}{\kappa\kappa_2}~s + 
\tfrac{K+\kappa}{\kappa\kappa_2}
\sqrt{\tfrac{\kappa\kappa_2}{K+\kappa -\kappa C}}~
\Pii{-\tfrac{\kappa C}{K+\kappa -\kappa C}}{p}{
\sqrt{\tfrac{K+\kappa -\kappa C}{\kappa\kappa_2}}~s}
\end{align}
where $\Pii{k}{p}{s}$ denotes the incomplete elliptic integral of 
the third kind\footnote{Defining the incomplete integral of third 
kind as
\begin{align*}
\Pi(k;s,p)=
\int_0^{s}\tfrac{1}{1-k\sin^2(u)}\tfrac{du}{\sqrt{1-p^2\sin^2(u)}},
\end{align*}
as is often done, we obtain the relationship
\begin{align*}
	\Pii{k}{p}{s} = \Pi(k; \jac{am}{p}(s), p).
\end{align*}
} with modulus $p$ and parameter $k$ as defined in 
\cite[Sect 17.2]{abramowitz1972}, that is, 
\begin{align*}
	\Pii{k}{p}{s} = \int_0^{s}\tfrac{du}{1-k \jac{sn}{p}^2(u)}.
\end{align*}
For solutions \eqref{eq:rSolGaussPos} with $p \notin [0,1]$,
transformations of $\Pi$ can be applied (see 
\Cref{app:JacobiFunctions}).

\subsubsection{Negative extrinsic Gauss curvature.}\label{punkt:NegCurv}
Assume now that $K<0$. Then $y=\sqrt{\tfrac{\kappa_1 K}{C-1}}r$ is 
real and satisfies
\begin{align*}
 y'^2 = \tfrac{\kappa C -(K+\kappa)}{\kappa\kappa_2}
  \left(1-y^2\right) \left(q^2 + p^2 y^2\right)
  ~\textrm{with}~
 p^2 = \tfrac{(K+\kappa)(C-1)}{\kappa C -(K+\kappa)}
  ~\textrm{and}~
 p^2 + q^2 =1. 
\end{align*}
Hence we learn that
\begin{align}\label{eq:rSolGaussNeg}
r(s) = \sqrt{\tfrac{C-1}{\kappa_1 K}}~\jac{cn}{p}
\left(\sqrt{\tfrac{\kappa C- (K+\kappa)}{\kappa\kappa_2}}~s\right). 
\end{align}

Again we integrate \eqref{eq:Psi} by writing
\begin{align*}
	\psi'(s) = -\tfrac{K}{\kappa\kappa_2} + 
\tfrac{K}{\kappa \kappa_2} 
\left(1-\tfrac{\kappa(C-1)}{\kappa C-(K+\kappa)}
\jac{sn}{p}^2\left(\sqrt{\tfrac{\kappa C -(K+\kappa)}{\kappa\kappa_2}}
~s\right)\right)^{-1},
\end{align*}
and $\psi$ is given by an incomplete integral of the third kind
\begin{align}\label{eq:psiSolGaussNeg}
	\psi(s) =-\tfrac{K}{\kappa\kappa_2}~s + 
\tfrac{K}{\kappa\kappa_2}
\sqrt{\tfrac{\kappa \kappa_2}{\kappa C-(K+\kappa)}}
~\Pii{
\tfrac{\kappa (C-1)}{
\kappa C-(K+\kappa)}}{p}{
\sqrt{\tfrac{\kappa C- (K+\kappa)}{\kappa\kappa_2}}~s}).
\end{align}

\subsubsection{Remaining cases.}\label{punkt:MixCurv}
Finally, assume that $K+\kappa \leq 0 \leq K$, hence $C$ a priori ranges 
over $\R$. Note that, for particular values of $C$, the solutions given in
the previous two sections suffice: 
\begin{itemize}[-]
	\item if $\tfrac{C}{\kappa_1}$ is negative, we can still employ 
the solution given in \eqref{eq:rSolGaussPos};
	\item if $\tfrac{C-1}{\kappa_1}$ is positive, the solution given 
in \eqref{eq:rSolGaussNeg} is still real. 
\end{itemize}

However, if $C$ satisfies
$0 \leq \tfrac{C}{\kappa_1} \leq \tfrac{1}{\kappa_1}$, both solutions 
are imaginary. We consider this case next. The function 
$y=\sqrt{\tfrac{\kappa_1 K}{C-1}}~r$ now satisfies the differential 
equation
\begin{align*}
 y'^2 = \tfrac{(-K)C}{\kappa\kappa_2}(1-y^2)(1-p^2y^2)
  ~\textrm{with}~
 p^2 = \tfrac{(K+\kappa)(C-1)}{KC},
\end{align*}
which is solved by the Jacobi $\jac{sn}{}$ function 
\begin{align*}
r(s) = \sqrt{\tfrac{C-1}{\kappa_1 K}}~\jac{sn}{p}
\left(\sqrt{\tfrac{(-K)C}{\kappa\kappa_2}}~s\right).
\end{align*}
To see that this is a real solution, note that the two square roots 
are purely imaginary. Since $\jac{sn}{p}(i x)$ is imaginary, we find
that $r$ is real (see \Cref{app:JacobiFunctions}): 
\begin{align}
r(s)&= \sqrt{\tfrac{C-1}{\kappa_1 K}}~\jac{sn}{p}
	\left(\sqrt{\tfrac{KC}{\kappa\kappa_2}}~is\right)  \notag \\
	&= -\sqrt{\tfrac{1-C}{\kappa_1 K}}~\jac{sc}{q}
 \left(\sqrt{\tfrac{KC}{\kappa\kappa_2}}~s\right), \label{eq:SNSol}
\end{align}
wherein $q$ is defined by $p^2+q^2=1$, as usual. Note that this solution is 
real under our assumption 
$0 \leq \tfrac{C}{\kappa_1} \leq \tfrac{1}{\kappa_1}$, hence 
in all cases where neither \eqref{eq:rSolGaussPos} nor 
\eqref{eq:rSolGaussNeg} yield a solution.

As in the previous subsections, we may rewrite \eqref{eq:Psi} as
\begin{align*}
	\psi'(s) = \tfrac{1-C}{\kappa_2} 
\left(1-\tfrac{K+\kappa-\kappa C}{K}\jac{sn}{q}^2
\left(\sqrt{\tfrac{KC}{\kappa\kappa_2}}~s\right) \right)^{-1},
\end{align*}
to obtain
\begin{align}\label{eq:SNSolPsi}
	\psi(s) = 
\tfrac{1-C}{\kappa_2}\sqrt{\tfrac{\kappa\kappa_2}{KC}}~
\Pii{\tfrac{K+\kappa-\kappa C}{K}}{q}{
\sqrt{\tfrac{KC}{\kappa\kappa_2}}~s}.
\end{align}

\subsection{Isotropic scenarios}
The isotropic cases are those where $\kappa$ or $\kappa_1$ vanishes,
that is, the cases
of parabolic rotational surfaces in hyperbolic space forms and
of elliptic rotational surfaces in Euclidean space. 

\subsubsection{Parabolic rotational surfaces in \texorpdfstring{$\H^3$}{H3}}\label{sec:Parb}
First, we investigate parabolic rotational surfaces in hyperbolic 
space: assume that $\v_1, \v_2$ are lightlike with 
$(\v_1, \v_2)=-1$ and that $\e_1, \e_2$ are unit length orthogonal vectors perpendicular to 
$\spn{\v_1, \v_2}$ and $\spn{\p,\q}$. Consider 
the parabolic rotations $\rho_i$ in $\spn{\v_1, \e_i}$, which are
given by
\begin{align*}
\ro{\theta}{i}:~ (\v_2, \e_i, \v_1) \mapsto (\v_2 + \theta \e_i + 
\tfrac{\theta^2}{2}\v_1, \e_i + \theta \v_1, \v_1).
\end{align*} 
Then, in polar coordinates, a parabolic rotational surface $\f$ is 
given by
\begin{align}\label{eq:fParamP}
	\f(t, \theta)= \tfrac{1}{\kappa}\q + r(t) \ro{\psi(t)}{2}
\ro{\theta}{1} \v_2 + d(t) \v_1. 
\end{align}

\begin{prop}
	Let $\Q$ be a hyperbolic space form with spacelike $\q$. Let 
$\f:\Sigma^2 \to \Q$ be a CGC $K\neq 0$ parabolic rotational surface, parametrised 
as in \eqref{eq:fParamP}. 
	
	Then, with a suitable choice of speed for the profile curve, the 
coordinate functions $r, \psi$ and $h$ satisfy
\begin{align}
r'^2 &= \left(-\tfrac{1}{\kappa}\right)
	\left(C-Kr^2\right)\left((K+\kappa)r^2-C\right), \label{eq:JEEP}\\
\psi'&= \left(-\tfrac{1}{\kappa}\right)
	\left(K -\tfrac{C}{r^2}\right), \label{eq:JEEpsiP} \\
d&=\left(-\tfrac{1}{\kappa}\right)\tfrac{1}{2r},
\end{align}
	where $C$ denotes a suitable constant.
\end{prop}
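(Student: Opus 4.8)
The plan is to mimic the proof of \Cref{prop:NonIsoProp}, using \Cref{lem:GCParaRot} in place of \Cref{lem:PCPol}; the isotropic, parabolic situation is in fact slightly easier, as one coordinate function gets pinned down by the light cone condition. First I would pass from the space form lift \eqref{eq:fParamP} to the corresponding Moutard lift
\[
 \m_\f(t,\theta)=\ro{\psi(t)}{2}\ro{\theta}{1}\v_2 +D(t)\v_1 +R(t)\q,
\]
namely \eqref{eq:MoutardOfFPolarPara} with $R=\tfrac{1}{\kappa r}$ and $D=\tfrac{d}{r}$, obtained by dividing $\f$ by $r$. Since $\v_1,\v_2$ are lightlike with $(\v_1,\v_2)=-1$ and are both fixed by the two parabolic rotations, while $(\q,\q)=-\kappa$, the light cone condition $(\m_\f,\m_\f)=0$ reads $\kappa R^2+2D=0$. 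Hence $D=-\tfrac{\kappa}{2}R^2$ and $d=rD=-\tfrac{1}{2\kappa r}$, which is the asserted formula for $d$; in particular $D$ is a function of $R$ alone, so that \eqref{eq:GCParaRot} expresses $K$ purely in terms of $R$ and $\psi$.

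Next I would extract a first integral from the hypothesis $K\equiv\textrm{const}$. Expanding \eqref{eq:GCParaRot} as $K=\tfrac{R\psi'(\psi'R''-\psi''R')}{v^4}-\tfrac{\psi'^2}{\kappa v^2}$ and introducing
\[
 g:=\frac{-\kappa R'^2}{v^2}=\frac{-\kappa R'^2}{\psi'^2-\kappa R'^2},
\]
which lies in $[0,1]$ because $\q$ spacelike forces $\kappa<0$ and hence $v^2\geq-\kappa R'^2\geq0$, one checks the identities $\psi'^2=(1-g)v^2$ and $\tfrac{\psi'(\psi'R''-\psi''R')}{v^4}=-\tfrac{g'}{2\kappa R'}$. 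Substituting them into the curvature equation turns it into
\[
 g-\frac{Rg'}{2R'}=1+\kappa K ,
\]
the analogue of the relation used in the proof of \Cref{prop:NonIsoProp}; as the right-hand side is constant, this forces $g=\widetilde C R^2+1+\kappa K$ for some constant $\widetilde C$. The bounds $0\leq g\leq1$ then restrict the admissible range of $R$, hence of $r$, just as \eqref{eq:Bounds} does in the non-isotropic case.

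Finally I would fix the parametrisation. From $R'^2=-\tfrac{g}{\kappa}v^2$ and $\psi'^2=(1-g)v^2$, choosing $t$ so that the profile speed satisfies $v^2=1-g$ gives $\psi'=-(\widetilde C R^2+\kappa K)$ (for a suitable sign) and $R'^2=\tfrac{1}{\kappa}(\widetilde C R^2+\kappa K+1)(\widetilde C R^2+\kappa K)$, both polynomials in $R$. Rewriting these in $r$ via $R=\tfrac{1}{\kappa r}$ and renaming the integration constant produces \eqref{eq:JEEP} and \eqref{eq:JEEpsiP}, exactly as \eqref{eq:EllipticR} and \eqref{eq:Psi} were converted at the end of the proof of \Cref{prop:NonIsoProp}.

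As the argument runs parallel to \Cref{prop:NonIsoProp}, I expect the only real obstacle to be the computational bookkeeping: confirming the identity for $g'$ and that the curvature equation genuinely collapses to the displayed exact relation, and then tracking the signs --- those dictated by $\q$ being spacelike and $\v_1,\v_2$ being lightlike --- carefully enough that the constants come out in the form stated in \eqref{eq:JEEP}--\eqref{eq:JEEpsiP}.
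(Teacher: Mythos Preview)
Your proposal is correct and follows essentially the same route as the paper's own proof: pass to the Moutard lift, eliminate $D$ via the light cone condition, introduce the same auxiliary $g=-\kappa R'^2/v^2$, integrate the constant-$K$ relation to a first integral $g=\widetilde C R^2+\text{const}$, and then fix the parametrisation by setting $v^2=1-g$ (equivalently $\psi'=v^2$) before translating back to $r$. One bookkeeping point to watch, exactly of the kind you anticipate: the paper's proof does not expand \eqref{eq:GCParaRot} as you do but uses the form $K=\kappa^2 R\psi'(\psi'R''-\psi''R')/v^4-\kappa\psi'^2/v^2$, which yields the intermediate constant $1+K/\kappa$ rather than your $1+\kappa K$; with the former you land exactly on \eqref{eq:JEEP}--\eqref{eq:JEEpsiP} after setting $C=-\widetilde C/\kappa$, whereas your version only reproduces them when $|\kappa|=1$.
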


\begin{proof}
Using the Moutard lift 
\begin{align*}
\m_\f(t,\theta)=R(t)\q+\ro{\psi(t)}{2}\ro{\theta}{1}\v_1+D(t)\v_1,
\end{align*}
we have seen in \eqref{eq:GCParaRot} that 
\begin{align*}
K =\kappa^2\psi'R\tfrac{\psi'R''-\psi''R'}{(\psi'^2 - \kappa R'^2)^2}
+(-\kappa)\tfrac{\psi'^2}{\psi'^2 - \kappa R'^2}.
\end{align*}
Similarly to the proof of \Cref{prop:NonIsoProp}, we set
\begin{align*}
	g := \tfrac{-\kappa R'^2}{\psi'^2 - \kappa R'^2}
\end{align*}
to obtain
\begin{align*}
	g = (1+\tfrac{1}{\kappa} K)+ \widetilde{C}R^2.
\end{align*}
This time, $0 \leq g \leq 1$ implies
\begin{align*}
	\tfrac{1}{\kappa} K + \widetilde{C}R^2 \leq 0,
\end{align*}
hence we derive the following differential equation for 
$r=\tfrac{1}{\kappa R}$:
\begin{align*}
r'^2 = \left(-\tfrac{1}{\kappa}\right)
\left(C-Kr^2\right)\left((K+\kappa)r^2 - C\right),
\end{align*}
by setting $C=-\tfrac{\widetilde{C}}{\kappa}$ and choosing 
\begin{align}\label{eq:ParaPsi}
	\psi' = v^2= \left(-\tfrac{1}{\kappa}\right)
\left(K -\tfrac{C}{r^2}\right).
\end{align}
Naturally, $d$ is given by $(\f, \f)=0$, and we are done.
\end{proof}

Further, $C$ satisfies (because the function $g$ in the previous 
proof satisfies $0\leq g\leq 1$)
\begin{align*}
	r^2(K+\kappa) \leq C \leq K r^2.
\end{align*}
Accordingly, for $r$ to be a real function,  
\begin{itemize}
	\item $K+\kappa>0$ dictates $C \geq 0$, 
	\item $K<0$ dictates $C \leq 0$, whereas
	\item $K+\kappa \leq 0 \leq K$ does not restrict $C$. 
\end{itemize}
As before, we define functions 
$y_1 = \sqrt{\tfrac{K+\kappa}{C}}r$ and $y_2 = \sqrt{\tfrac{K}{C}} r$
and use them to obtain the solutions
\begin{align}\label{eq:ParabSolR}
\begin{split}
r_1(s) &= \sqrt{\tfrac{C}{K+\kappa}}\jac{cn}{p}
 \left(\sqrt{C}~s\right),~\textrm{with}~p^2 = -\tfrac{K}{\kappa}, \\
r_2(s) &=\sqrt{\tfrac{C}{K}}~\jac{cn}{p}
 \left(\sqrt{-C}~s\right),~\textrm{with}~p^2=\tfrac{K+\kappa}{\kappa}.
\end{split}
\end{align}
These solutions are real as soon as their coefficients are real (i.e., 
as soon as the respective $y$-function is real). This takes care of 
all cases: in the first two cases, the respective function provides
a solution.
If $K+\kappa \leq 0 \leq K$, depending on the sign of $C$, one of $r_1, r_2$ is 
real and hence a feasible solution. By \eqref{eq:JEEpsiP},
$\psi$ is determined as before:
\begin{align}\label{eq:ParabSolPsi}
\begin{split}
\psi_1(s) &= -\tfrac{K}{\kappa}~s+
\tfrac{K+\kappa}{\kappa\sqrt{C}}~\Pii{1}{p}{\sqrt{C}~s}, \\ 
\psi_2(s) &= -\tfrac{K}{\kappa}~s-
\tfrac{K}{\kappa}\sqrt{-\tfrac{\kappa}{(K+\kappa)C}}~
\Pii{p^2}{p}{\sqrt{-\tfrac{(K+\kappa)C}{\kappa}}~s},
\end{split}
\end{align}
with an appropriate transformation applied if $p \notin [0,1]$. 

\subsubsection{Surfaces of revolution in \texorpdfstring{$\R^3$}{R3}}
Finally, we consider an elliptic rotational surface in Euclidean 
$3$-space, i.e., a common surface of revolution. This case was fully analysed
in \cite{hertrich-jeromin2015}, thus we just show how our setting leads
to the same differential equations.
Equation \eqref{eq:GCEuclid} can be rewritten as
\begin{align*}
	\tfrac{2D'}{D^3}\kappa_1 K = g'
\end{align*}
with
\begin{align*}
	g := \tfrac{\kappa_1 D'^2}{\kappa_1 D'^2 + \psi'^2 D^4}.
\end{align*}
Integration of the equation then yields
\begin{align*}
D'^2 =\left((1-C)D^2+\kappa_1 K\right)\left(C D^2-\kappa_1 K\right),
\end{align*}
after reparameterising the profile curve so that
\begin{align*}
	v^2 = \kappa_1\left((1-C)D^2+\kappa_1 K\right).
\end{align*}
In this case, we obtain the space form lift of $\m_\f$ via rescaling
by $r=1/D$, and arrive (assuming $\kappa_1 =1$) at the standard 
parametrisation in a suitable orthonormal basis $\{e_1, e_2, e_3\}$ 
of $\R^3$:
\begin{align*}
f(t,\theta) = r(t)\cos \theta e_1 + r(t) \sin \theta e_2 + \psi(t)e_3,
\end{align*}
with $r$ satisfying
\begin{align*}
	r'^2 = \left((1-C)+Kr^2\right)(C-Kr^2),
\end{align*}
which is precisely Equation (6) of \cite{hertrich-jeromin2015}.
Thus, for a solution, we refer the interested reader to this 
publication.

\section{Rotational CGC surfaces in \texorpdfstring{$\S^3$}{S3} and %
\texorpdfstring{$\H^3$}{H3}}\label{sec:5}
In this section we discuss rotational CGC surfaces in $\S^3$ and 
$\H^3$, which are modelled in the classical 
way: consider $\S^3$ as the unit sphere in Euclidean $\R^4$ and 
$\H^3$ as the upper sheet of the two sheeted hyperboloid in 
$\R^{3,1}$ with the Lorentz metric. In Minkowski space $\R^{3,1}$, 
we choose the basis according to the type of rotation: for surfaces 
of hyperbolic and elliptic rotation, we choose an \emph{orthonormal 
basis}, so that the metric takes the form
\begin{align*}
	(x,y) = -x_0y_0 + x_1y_1 + x_2y_2 + x_3y_3; 
\end{align*}
when considering surfaces of parabolic rotation we use a 
\emph{pseudo-orthonormal basis} and the metric is computed as
\begin{align*}
	(x,y) = -x_0y_1 - x_1y_0 + x_2y_2 + x_3y_3.
\end{align*}
These choices result in the parametrisations for surfaces of 
elliptic and hyperbolic (or parabolic) rotation given in
\eqref{eq:fParam} (or \eqref{eq:fParamP}) for $|\kappa|=|\kappa_1|=1$ 
(or $\kappa = -1$) with solutions to the equations \eqref{eq:JEE} and 
\eqref{eq:JEEpsi} (or \eqref{eq:JEEP} and \eqref{eq:JEEpsiP}).
Three 
different cases emerged in the solution of the differential equation 
satisfied by $r$. In this section, however, we want to consider the 
(slightly different) following three cases:
\begin{itemize}
	\item $K$ and $K+\kappa$ are positive,
	\item $K$ and $K+\kappa$ are negative, and
	\item $K$ and $K+\kappa$ have different signs.
\end{itemize}
These arise (algebraically) from the fact that at $K=0$ and 
$K+\kappa=0$ the polynomial on the right side of \eqref{eq:JEE} 
degenerates to degree $2$. This is another instance of the 
bifurcation that appears in the construction of constant curvature 
surfaces in space forms, see \cite[Sect 3.2]{dursun2020}, 
\cite{ferus1996} or \cite{lopez2009a} (\cite{hertrich-jeromin2015} 
for the Euclidean case). 

The boundary case $K=0$ yields tubular surfaces and is not 
considered in these notes. Surfaces with $K+\kappa = 0$ are
\emph{(intrinsically) flat}. The class of intrinsically flat 
surfaces has been widely studied and many examples are known 
(e.g., the Clifford tori in $\S^3$ or the peach front in $\H^3$). 
We start by considering this case.

\subsection{Intrinsically flat surfaces in
 \texorpdfstring{$\H^3$}{H3} and \texorpdfstring{$\S^3$}{S3}}
Flat surfaces of non-parabolic rotation are (mostly) a boundary case 
of the solution \eqref{eq:rSolGaussNeg} to \eqref{eq:JEE} given in 
Subsection \ref{punkt:NegCurv}:
\begin{align*}
 r(s) &=  \sqrt{\tfrac{(1-C)}{\kappa_2}} ~
  \jac{cn}{p}\left(\sqrt{\tfrac{C}{\kappa_2}}~s\right)
  ~\textrm{with}~
  p^2 = \tfrac{(K+\kappa)(C-1)}{\kappa C-(K+\kappa)} \\
      &=\sqrt{\tfrac{(1-C)}{\kappa_2}} ~ 
  \cos\left(\sqrt{\tfrac{C}{\kappa_2}}~s\right),
\end{align*}
where we used $\jac{cn}{0}=\cos$.
Note that $\kappa_2 = \kappa\kappa_1$ from \eqref{eq:MultiTable}.
Further, from \eqref{eq:JEEpsi} we get
\begin{align*}
	\psi(s) &= s - \sqrt{\tfrac{1}{\kappa_2}}~ 
\operatorname{arctan}\left(\sqrt{\tfrac{1}{C}}~
\operatorname{tan}(\sqrt{\kappa_2 C}~s)\right).
\end{align*}
From \eqref{eq:BoundsD} we then learn that
\begin{align*}
	0 \leq \kappa_1 C \leq \kappa_1 - \kappa\,r^2,
\end{align*}
and obtain the following results:
\begin{itemize}[-]
\item For surfaces of elliptic rotation in $S^3$
 ($\kappa = \kappa_1 = 1$), $C \in [0,1]$ which implies
 that $r$ and $\psi$ are real functions. For $C=1$, the surface 
 degenerates as $r=0$. For $C=0$, however, the differential 
 equation satisfied by $r$ degenerates to $r'=0$, the solution 
 to which are the Clifford tori.  
\item For surfaces of hyperbolic rotation in $\H^3$ 
 ($\kappa = \kappa_1 =-1$), $C\in[-1,0]$ is negative and $r$ as well 
 as $\psi$ are again real (hyperbolic) functions.
 These surfaces are called \emph{peach fronts}
 (see \cite{kokubu2005b}, \cite{kokubu2003}). 
\item For surfaces of elliptic rotation in $\H^3$
 ($\kappa = -\kappa_1=-1$), $C$ is positive and 
 \begin{align*}
	r(s) = \sqrt{C-1} ~ \cosh\left(\sqrt{C}~s\right)
 \end{align*}
 is real for $C>1$, which yields \emph{snowman fronts},
 but
 is imaginary for $C<1$;
 in this case, we obtain as solution the real form 
 of the solution \eqref{eq:SNSol}:
 \begin{align*}
	r(s) &= \sqrt{1-C}~\sinh\left(\sqrt{C}~s\right),
 \end{align*}
 which yields an \emph{hourglass front} 
 (see \cite{kokubu2005b}, \cite{kokubu2003}).
\end{itemize}

\noindent For flat fronts of parabolic rotation in $\H^3$, on the 
other hand, we employ the parametrisation \eqref{eq:fParamP}
\begin{align*}
 \f(t, \theta)= -\q +r(t)\ro{\psi(t)}{2}\ro{\theta}{1}\v_1 +d(t)\v_2,
\end{align*}
with the coordinate function
\begin{align*}
 r(s) =&=\sqrt{\tfrac{C}{K}}~\jac{cn}{p}
 \left(\sqrt{-C}~s\right),~\textrm{with}~p^2=1-K.
\end{align*}
We see that for $K-1=0$,
\begin{align*}
		r(s) &=\sqrt{C}~\cosh(\sqrt{C}~s) 
\end{align*}
and integrate \eqref{eq:JEEpsiP} to obtain
\begin{align*}
		\psi(s) &=s-\tfrac{\tanh(\sqrt{C}~s)}{\sqrt{C}},
\end{align*}
for any $C>0$. 

\subsection{Rotational CGC surfaces in \texorpdfstring{$\S^3$}{S3}} 	
	For rotational surfaces in $\S^3$, we assume 
$\kappa = \kappa_1 = \kappa_2 = 1$ in \eqref{eq:MultiTable}. 
In this case, \eqref{eq:JEEd} reduces to $d^2=1-r^2>0$, which can be used to 
refine \eqref{eq:BoundsD} to yield the following cases:
	\begin{itemize}
		\item $K > 0$, implying $0\leq C \leq K+1$, 
		\item $K+1 < 0$, implying $K+1\leq C \leq 1$, and
		\item $K \in (-1,0)$, implying $0\leq C \leq 1$. 
	\end{itemize}
	
\Cref{thm:Sphere} below, combined with \Cref{prop:Bonnet}, then provides a 
complete classification of non-tubular channel linear Weingarten 
surfaces in $\S^3$. 

\begin{bem}
	In \cite{arroyo2019} the authors consider Delaunay type surfaces, 
that is, rotational CMC surfaces in $\S^3$. Such surfaces are parallel 
to rotational surfaces of constant positive Gauss curvature.
Thus explicit parametrisations of these Delaunay surfaces in a sphere
can be obtained by applying a suitable parallel transformation to
the parametrisations given in \Cref{thm:Sphere}.
\end{bem}
	
\begin{thm}\label{thm:Sphere}
	Every rotational constant Gauss curvature $K\neq 0$ 
surface in $\S^3\subset \R^4$ is given by
	\begin{align*}
		(s,\theta) \mapsto \left(r(s)\cos\theta, r(s)\sin\theta, 
\sqrt{1-r^2(s)}~\cos\psi(s), \sqrt{1-r^2(s)}~\sin\psi(t)\right), 
	\end{align*}
	where $r, \psi$ are listed in \Cref{tab:SphericalCase}.
\end{thm}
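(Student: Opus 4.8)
The plan is to exhibit the space form $\S^3\subset\R^4$ inside the Lie sphere geometric model, identify the coordinate functions of the rotational surface with the functions $r,\psi$ produced in \Cref{sec:2} and \Cref{sec:4}, and then enumerate the solutions case by case. First I would fix the model: take $\kappa=\kappa_1=\kappa_2=1$ in the multiplication table \eqref{eq:MultiTable}, so that $\spn{\p,\q}$, $\Pi_1=\spn{\v_1,\e_1}$ and $\Pi_2=\spn{\v_2,\e_2}$ are all positive definite after projecting off $\p$; projecting the space form lift \eqref{eq:fParam} onto $\spn{\p}^\perp\cap\spn{\q}^\perp\cong\R^4$, with $\Pi_1$ and $\Pi_2$ as two orthogonal coordinate $2$-planes, the surface becomes $(r(s)\rho_1(\theta)\v_1,\ d(s)\rho_2(\psi(s))\v_2)$. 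Since in the spherical case $\rho_1,\rho_2$ act as ordinary planar rotations, $r(s)\rho_1(\theta)\v_1=(r(s)\cos\theta, r(s)\sin\theta)$ and $d(s)\rho_2(\psi(s))\v_2=(d(s)\cos\psi(s), d(s)\sin\psi(s))$; finally \eqref{eq:JEEd} gives $d^2=1-r^2$, producing exactly the stated parametrisation. That part is essentially bookkeeping, identifying the abstract bases $\v_1,\v_2$ with the standard basis vectors of $\R^4$.

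Next I would invoke \Cref{prop:NonIsoProp}: a rotational CGC $K\neq0$ surface in $\S^3$, parametrised as in \eqref{eq:fParam}, has its coordinate functions $r,\psi$ satisfying \eqref{eq:JEE}, \eqref{eq:JEEpsi} and \eqref{eq:JEEd} for a suitable constant $C$, after a reparametrisation of the profile curve. Setting $\kappa=\kappa_1=\kappa_2=1$, equation \eqref{eq:JEE} becomes $r'^2=((1-C)+Kr^2)(C-(K+1)r^2)$, and $d^2=1-r^2$ forces the refinement of the bounds \eqref{eq:BoundsD} to $0\le C\le 1$ whenever $K>0$ is relaxed, giving the three subcases $K>0$ (so $0\le C\le K+1$), $K+1<0$ (so $K+1\le C\le 1$), and $K\in(-1,0)$ (so $0\le C\le 1$) as already recorded before the theorem statement. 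In each subcase I would then substitute the appropriate rescaled variable $y$ (as in Subsections \ref{punkt:PosCurv}, \ref{punkt:NegCurv}, \ref{punkt:MixCurv}) to bring \eqref{eq:JEE} into the standard Jacobi form $y'^2=\mathrm{const}\cdot(1-y^2)(q^2+p^2y^2)$ or $y'^2=\mathrm{const}\cdot(1-y^2)(1-p^2y^2)$, read off that $y$ is a $\jac{cn}{p}$ or $\jac{sn}{p}$ function, and integrate \eqref{eq:JEEpsi} via an incomplete elliptic integral of the third kind to get $\psi$; the degenerate endpoints $C=0$, $C=1$, $K\to0$, $K+1\to0$ must be handled separately, as in the flat case above, where the polynomial drops to degree two and the solution is trigonometric or hyperbolic. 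Collecting all of these — with the reality of each branch checked via the bounds on $C$ and, where needed, the identities for $\jac{sn}{p}(ix)$ from \Cref{app:JacobiFunctions} — populates \Cref{tab:SphericalCase}, and applying any necessary Jacobi transformation to bring the modulus into $[0,1]$.

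The main obstacle is the case bookkeeping rather than any single hard estimate: one must verify that the three subcases $K>0$, $K+1<0$, $K\in(-1,0)$, together with the degenerate values of $C$, exhaust all admissible data and that in each the rescaling $y$ is real, the resulting Jacobi modulus can be normalised into $[0,1]$, and the reparametrisation of the profile curve used in \Cref{prop:NonIsoProp} is globally legitimate (in particular that $v^2\ge0$ on the relevant interval, which is exactly \eqref{eq:Bounds}). I would also need to confirm that the reconstruction $d=\sqrt{1-r^2}$ is consistent with the sign/branch choices so that the image genuinely lies on the unit sphere. Once the table entries are assembled and each is checked against \eqref{eq:JEE}–\eqref{eq:JEEd} by direct differentiation, the theorem follows; combined with \Cref{prop:Bonnet}(1), every non-tubular channel linear Weingarten surface in $\S^3$ is a parallel transform of one of these, which is the advertised classification.
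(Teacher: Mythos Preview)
Your proposal is correct and follows essentially the same route as the paper: specialise \Cref{prop:NonIsoProp} to $\kappa=\kappa_1=\kappa_2=1$, split into the three cases $K>0$, $K+1<0$, $K\in(-1,0)$ using the refined bounds from $d^2=1-r^2>0$, feed the resulting ODE into the standard Jacobi forms of Subsections~\ref{punkt:PosCurv}--\ref{punkt:MixCurv}, and normalise the modulus via the transformations in \Cref{app:JacobiFunctions}. The one point the paper makes slightly more explicit than you do is that in the mixed case $K\in(-1,0)$ both candidate solutions $r_1,r_2$ are formally real, and it is precisely the constraint $r^2<1$ (not just reality) that rules out the $\jac{dc}{}$-branch and forces the $\jac{cd}{}$-type solutions listed in \Cref{tab:SphericalCase}; you allude to this under ``sign/branch choices'', but it deserves to be stated as the decisive filter.
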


\begin{proof}
	According to Subsections \ref{punkt:PosCurv} and \ref{punkt:NegCurv}, 
we obtain as potential solutions to \eqref{eq:JEE}
\begin{align*}
 r_1(s) &= \sqrt{\tfrac{C}{K+1}}~\jac{cn}{p}(\sqrt{K+1-C}~s)
  ~\textrm{with}~ p^2 = \tfrac{KC}{K+1-C},
  ~\textrm{or} \\
 r_2(s) &= \sqrt{\tfrac{C-1}{K}}~\jac{cn}{p}(\sqrt{C-(K+1)}~s)
  ~\textrm{with}~ p^2 = \tfrac{(K+1)(C-1)}{C-(K+1)}.
\end{align*}

For $K>0$, the function $r_1$ is real and $p^2\in [0,\infty)$,
while $r_2$ has an imaginary argument;
this can be rectified by means of the Jacobi transformations
given in \Cref{app:JacobiFunctions}.
We obtain a bifurcation of 
the solution space into $\jac{dn}{}$- and $\jac{cn}{}$-type solutions. 

For $K+1<0$, where $r_2$ is real and $p^2\in[0,\infty)$,
a similar analysis applies.

For $K\in(-1,0)$, both functions, $r_1$ and $r_2$, are real even though
the arguments and $p$ may be imaginary
(see \Cref{app:JacobiFunctions} again).
Writing, for example, $r_1$ in its real form yields
\begin{align*}
 r_1(s) &= \sqrt{\tfrac{C}{K+1}}~\jac{cd}{p}(\sqrt{(K+1)(1-C)}~s)
  ~\textrm{with}~ p^2 = \tfrac{(K+1)(C-1)}{KC}
  ~\textrm{for}~ C<K+1 ~\textrm{and}\\
 r_1(s) &= \sqrt{\tfrac{C}{K+1}}~\jac{dc}{p}(\sqrt{(-K)C}~s)
  ~\textrm{with}~p^2=\tfrac{(K+1)(C-1)}{KC}
  ~\textrm{when}~C>K+1.
\end{align*} 
	However, we still have the restriction $r^2<1$, which shows that 
the second form is not a feasible solution. Accordingly, we need to
use the real form of $r_2$ in the case $C>K+1$. 
	
	\Cref{tab:SphericalCase} summarises the solutions, written in
terms of the modulus $p$ instead of the integration constant $C$. 
The function $\psi$ is obtained as in Subsection \ref{punkt:PosCurv}
for each case. 
\end{proof}

\begin{table}[ht]
\centering
\begin{tabular}{l|ll}
$K<-1$ 
&$r(s)=\sqrt{\tfrac{p^2}{p^2-(K+1)}}~\jac{cn}{p}\left(\FAC~s\right)$ 
&$\psi(s)=\tfrac{K}{\FAC}~\Pii{\tfrac{p^2}{K+1}}{p}{\FAC~s}-Ks$ \cr
&\quad $\FAC=\sqrt{\tfrac{K(K+1)}{p^2-(K+1)}}$, &$p\in [0,1]$ \cr
&$r(s)=\sqrt{\tfrac{1}{1-(K+1)p^2}}~\jac{dn}{p}\left(\FAC~s\right)$ 
&$\psi(s)=\tfrac{K}{\FAC}~\Pii{\tfrac{1}{K+1}}{p}{\FAC~s}-Ks$ \cr
&\quad $\FAC=\sqrt{\tfrac{K(K+1)}{1-(K+1)p^2}}$ &$p\in [0,1]$ \cr
\hline
$K=-1$ 
&$r(s)=\sqrt{1-p^2}~\cos~(ps)$ 
&$\psi(s)=s-\operatorname{arctan}\left(\tfrac{\tan(ps)}{p}\right)$ \cr
&\quad $p\in (0,1)$ &\cr
\hline 
$K\in(-1,0)$ 
&$r(s)=\sqrt{\tfrac{p^2}{(K+1)p^2-K}}~\jac{cd}{p}\left(\FAC~s\right)$ 
&$\psi(s)=\tfrac{1}{\FAC}~\Pii{\tfrac{(K+1)p^2}{K}}{p}{\FAC~s}$ \cr
&\quad $\FAC=\sqrt{\tfrac{(-K)(K+1)}{(K+1)p^2-K}}$, &$p\in[0,1]$ \cr
&$r(s)=\sqrt{\tfrac{p^2}{K+1-Kp^2}}~\jac{cd}{p}\left(\FAC~s\right)$ 
&$\psi(s)=s-\tfrac{1}{\FAC}~\Pii{\tfrac{Kp^2}{K+1}}{p}{\FAC~s}$ \cr
&\quad $\FAC=\sqrt{\tfrac{(-K)(K+1)}{K+1-Kp^2}}$, &$p\in[0,1]$ \cr
\hline
$K>0$ 
&$r(s)=\sqrt{\tfrac{p^2}{K+p^2}}~\jac{cn}{p}\left(\FAC~s\right)$ 
&$\psi(s)=\tfrac{K+1}{\FAC}~\Pii{-\tfrac{p^2}{K}}{p}{\FAC~s}-Ks$\cr
&\quad $\FAC=\sqrt{\tfrac{K(K+1)}{K+p^2}}$, &$p\in [0,1]$ \cr
&$r(s)=\sqrt{\tfrac{1}{Kp^2+1}}~\jac{dn}{p}\left(\FAC~s\right)$ 
&$\psi(s)=\tfrac{K+1}{\FAC}~\Pii{-\tfrac{1}{K}}{p}{\FAC~s}-Ks$ \cr
&\quad $\FAC=\sqrt{\tfrac{K(K+1)}{Kp^2+1}}$ &$p\in [0,1]$
\end{tabular}
\caption{Parameter functions of a rotational CGC 
surface in \texorpdfstring{$\S^3$}{S3}.}
\label{tab:SphericalCase}
\end{table}

\begin{figure}%
\centering
\subfigure[][$K=-2$, $\jac{cn}{}$-type]{%
\label{fig:Curves_S3_Neg1}%
\includegraphics[width=3\columnwidth/10]{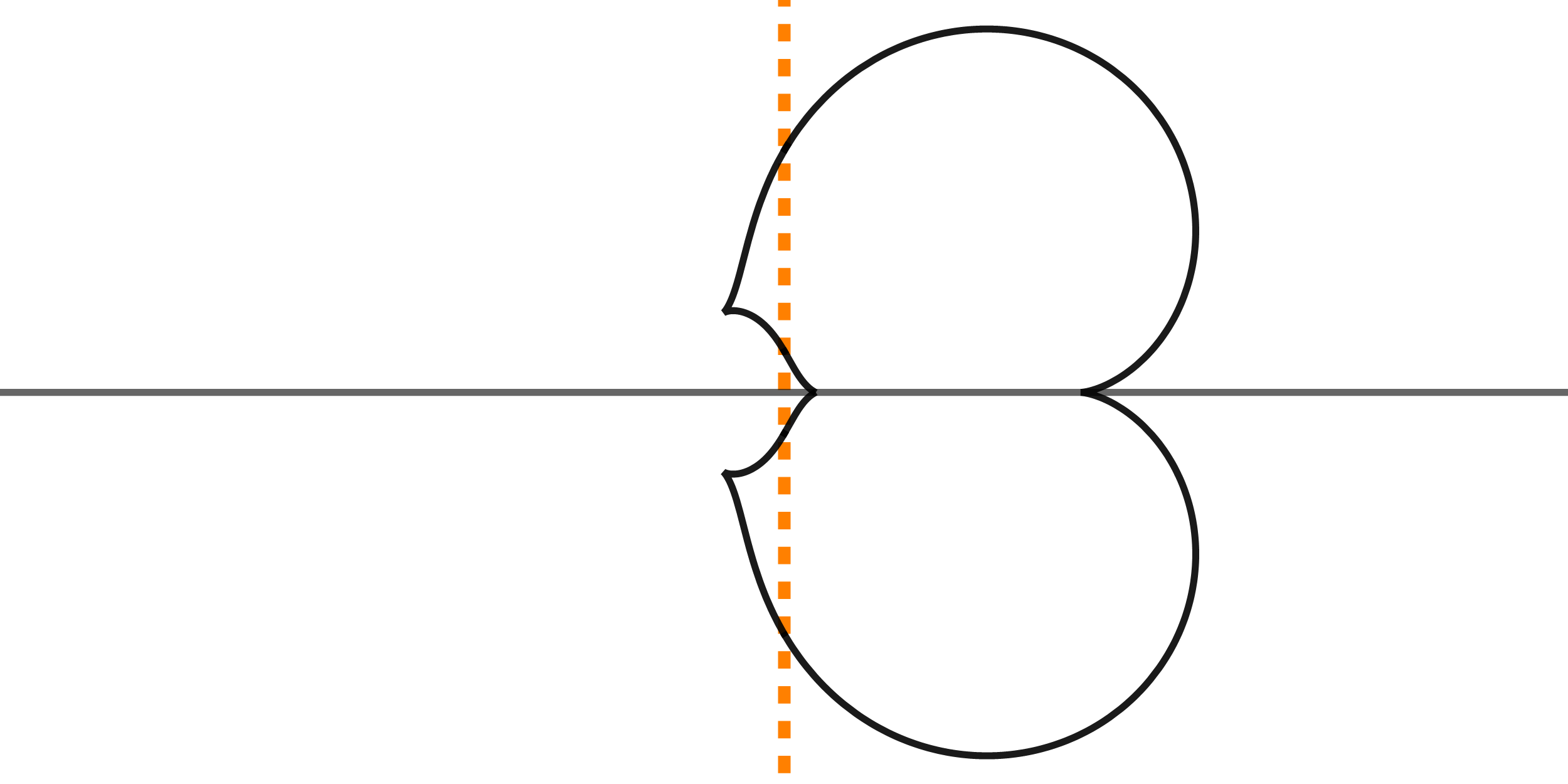}%
}%
\hfill 
\subfigure[][$K=-2$, $\jac{dn}{}$-solution]{%
\label{fig:Curves_S3_Neg2}%
\includegraphics[width=3\columnwidth/10]{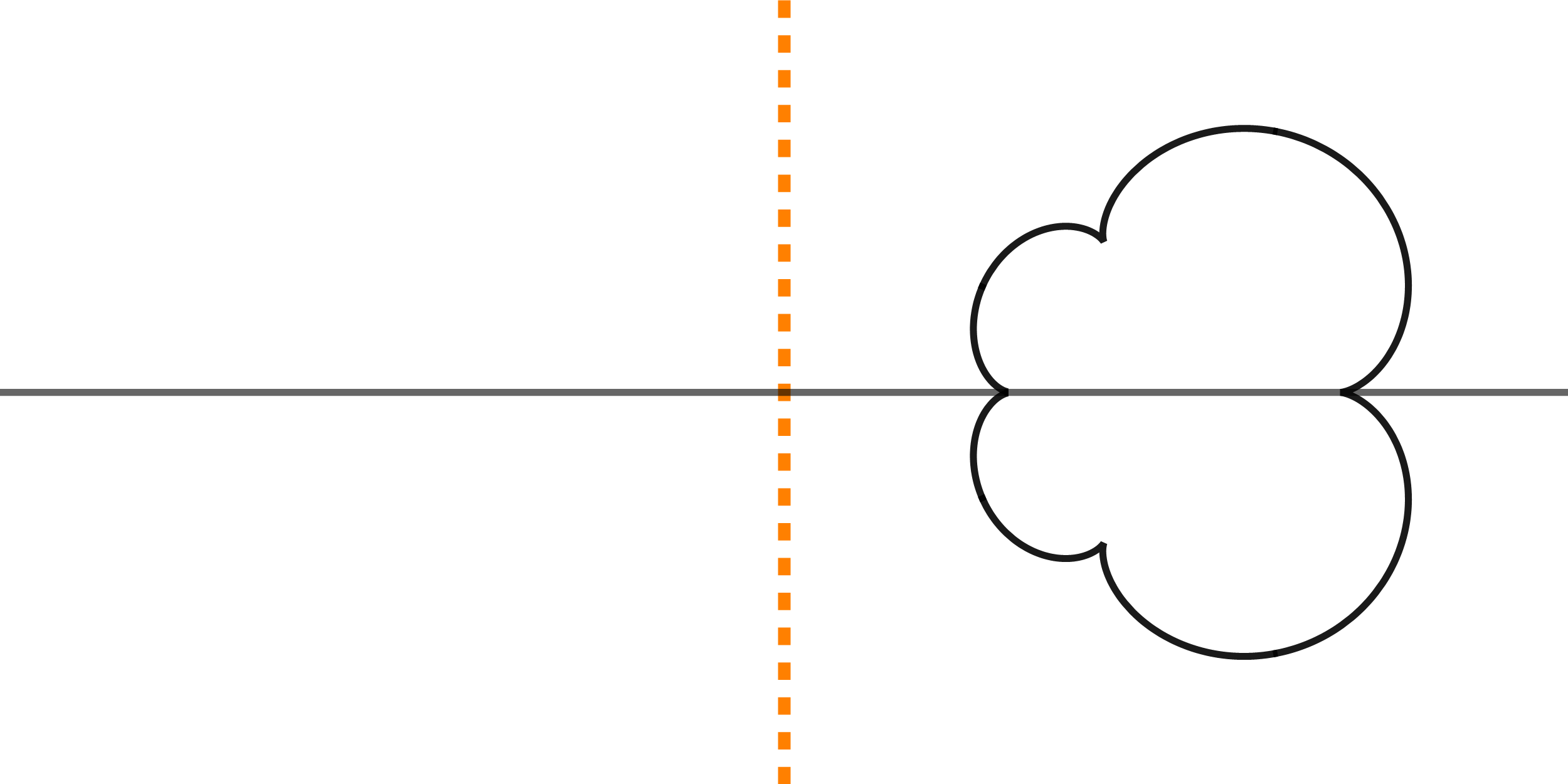}%
}%
\hfill
\subfigure[][$K=-0.4$,first $\jac{cd}{}$-solution]{%
\label{fig:Curves_S3_Mix1}%
\includegraphics[width=3\columnwidth/10]{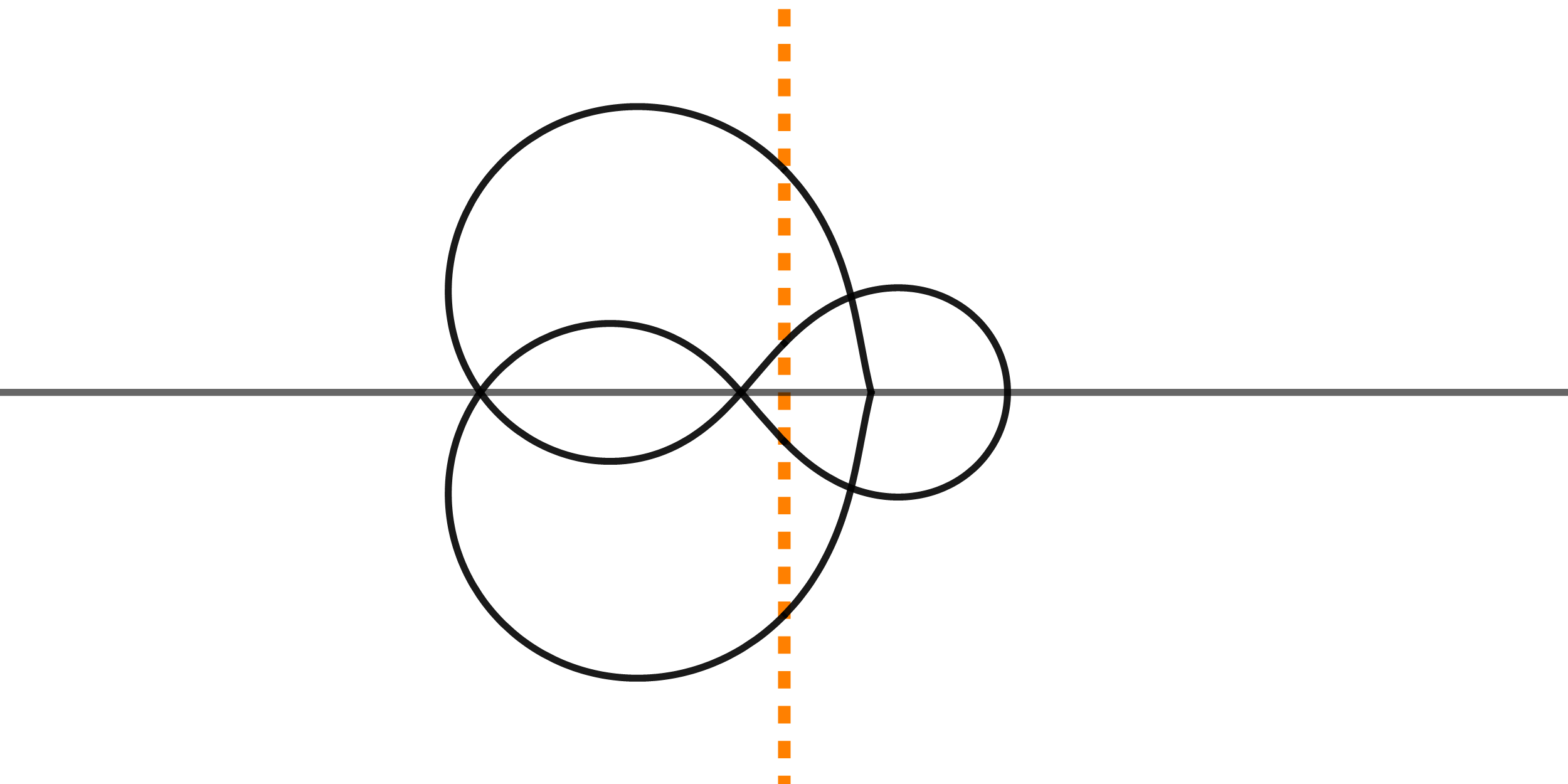}%
}%
\\
\subfigure[][$K=-0.4$, second $\jac{cd}{}$-solution]{%
\label{fig:Curves_S3_Mix2}%
\includegraphics[width=3\columnwidth/10]{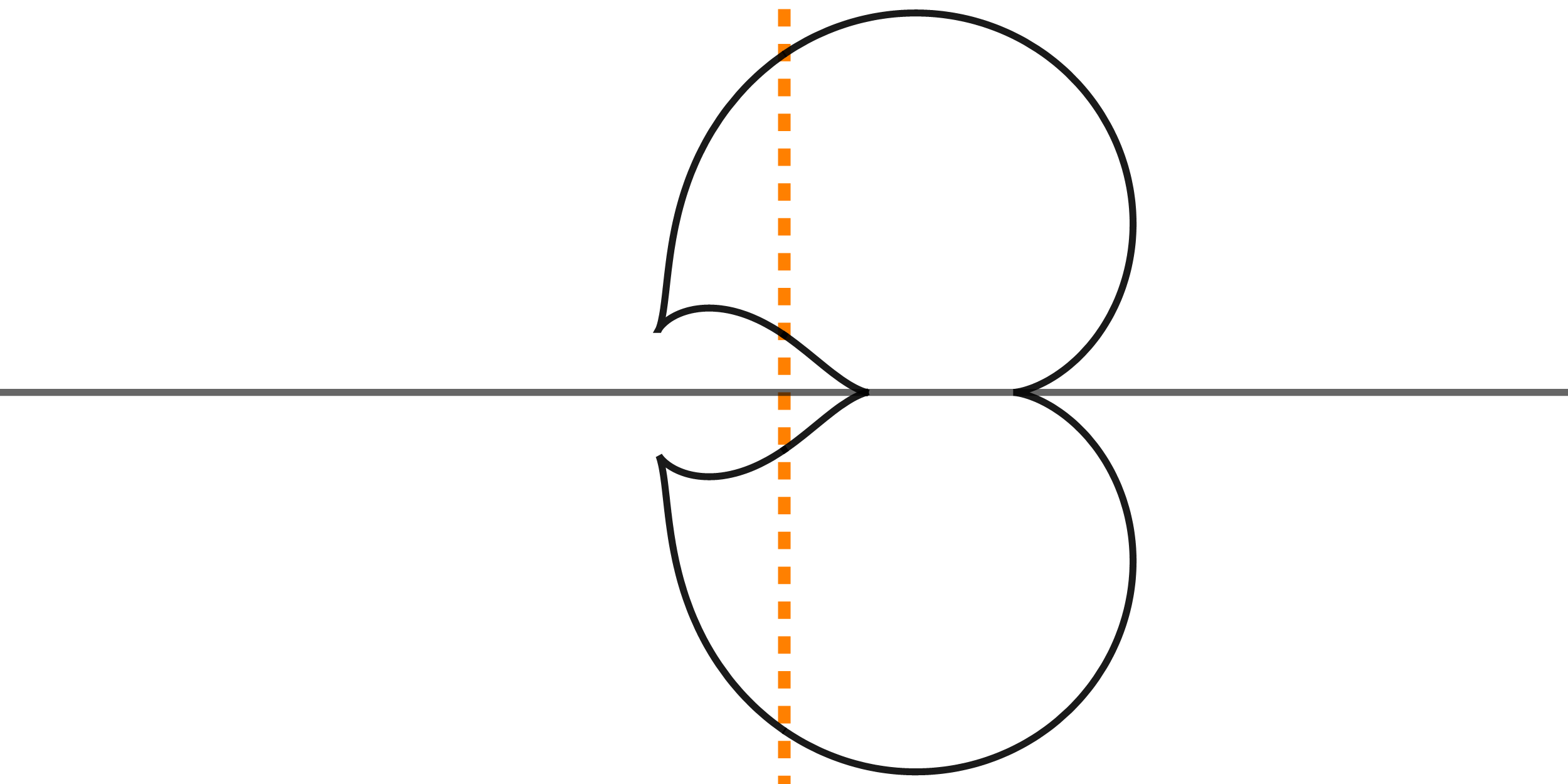}%
}%
\hfill
\subfigure[][$K=1$, $\jac{cn}{}$-solution]{%
\label{fig:Curves_S3_Pos1}%
\includegraphics[width=3\columnwidth/10]{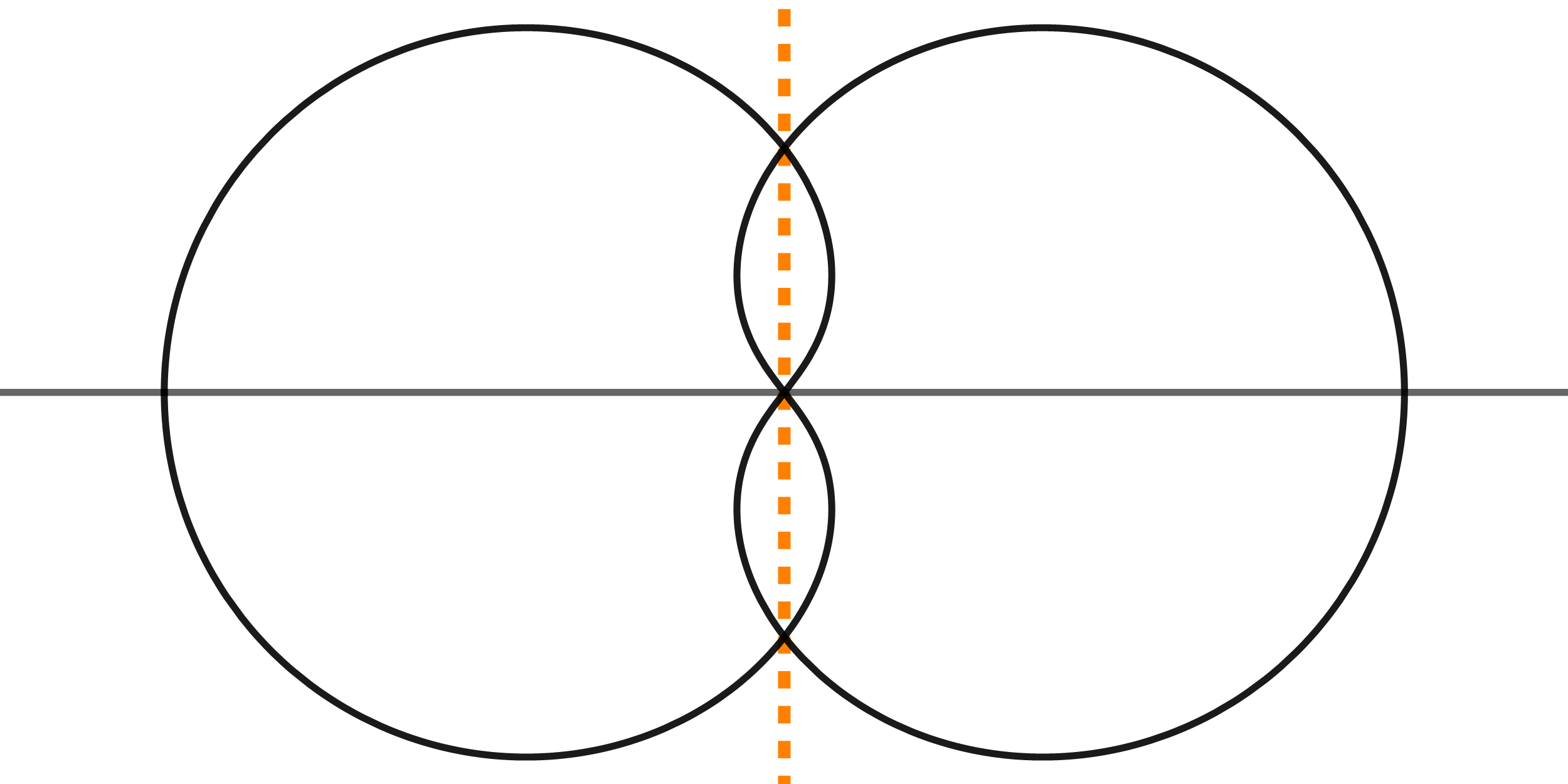}%
}%
\hfill
\subfigure[][$K=1$, $\jac{dn}{}$-solution]{%
\label{fig:Curves_S3_Pos2}%
\includegraphics[width=3\columnwidth/10]{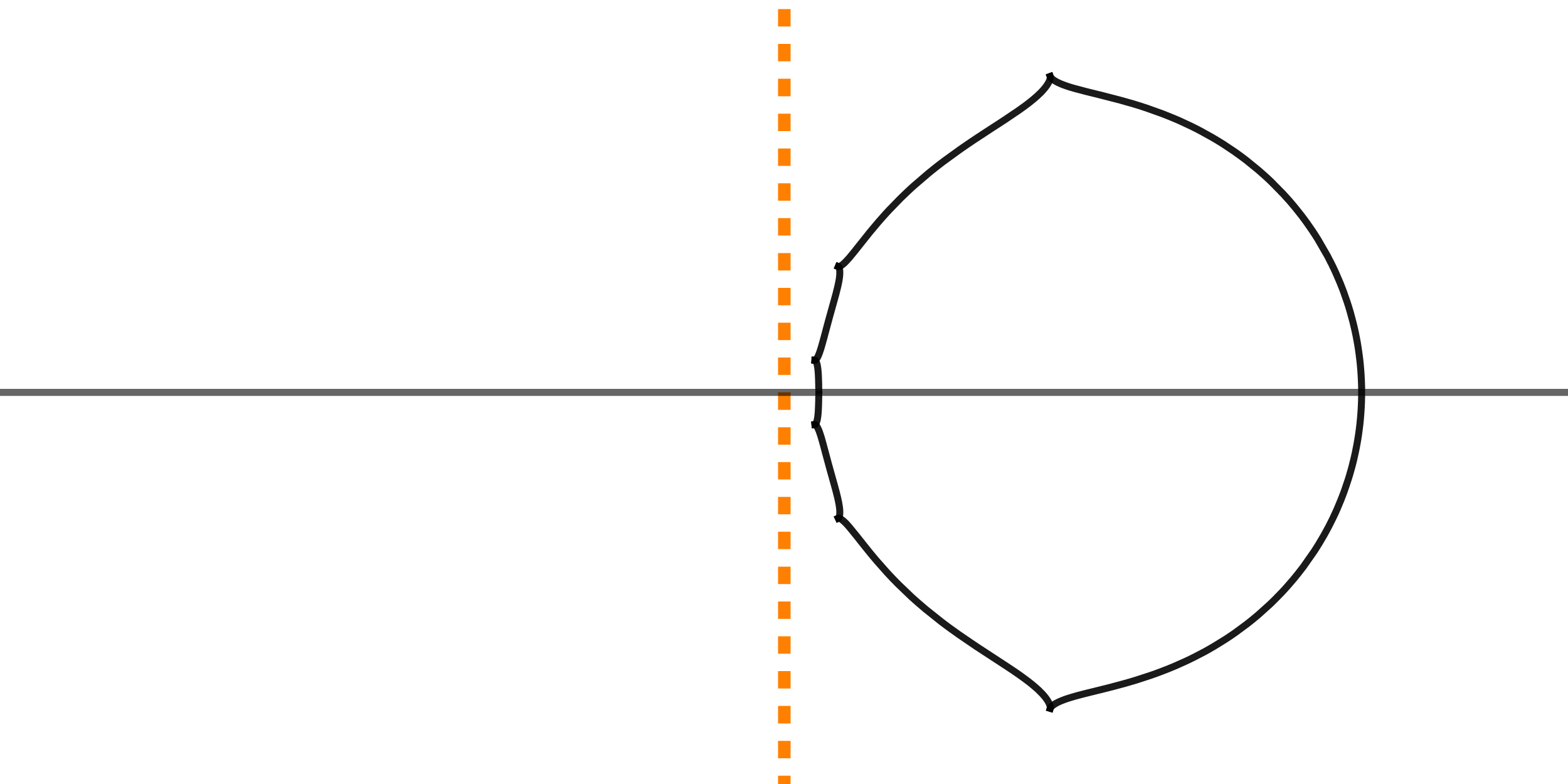}%
}%
\caption{Profile curves of CGC surfaces of 
elliptic rotation in $\S^3$ after stereographic projection: the 
dashed line represents the axis of rotation. The profile curves 
are obtained from the solutions in 
\Cref{tab:SphericalCase} and the surfaces obtained via 
rotation are displayed in \Cref{fig:SphereSurfaces}.}%
\label{fig:Curves_S3}%
\end{figure}

\begin{figure}%
	\centering
	\subfigure[][$K=-2$, $\jac{cn}{}$-solution]{%
		\label{fig:S3_Neg1}
		\includegraphics[width=\columnwidth/4]{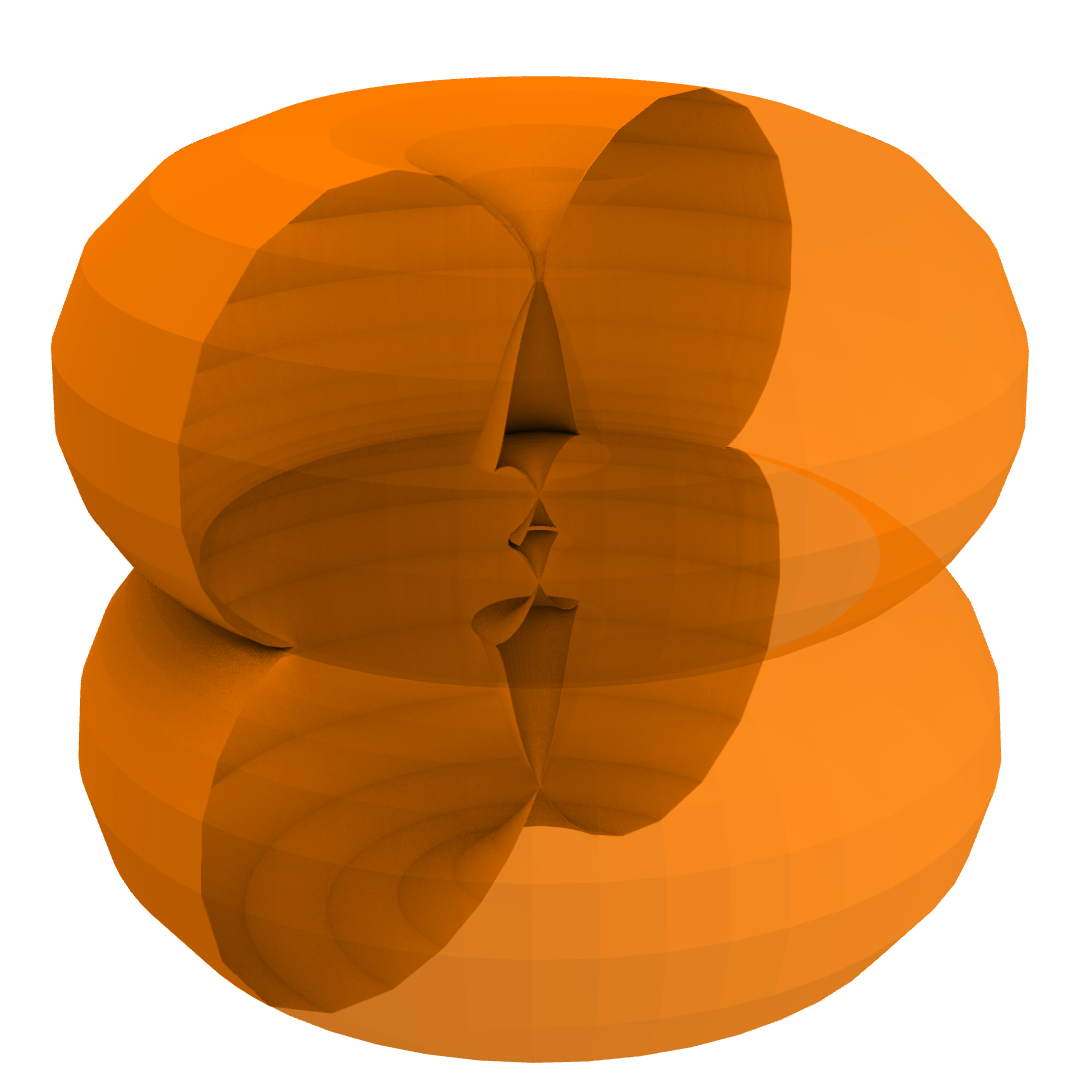}%
	}%
	\hfill 
		\subfigure[][$K=-2$, $\jac{dn}{}$-solution]{%
				\label{fig:S3_Neg2}
		\includegraphics[width=\columnwidth/4]{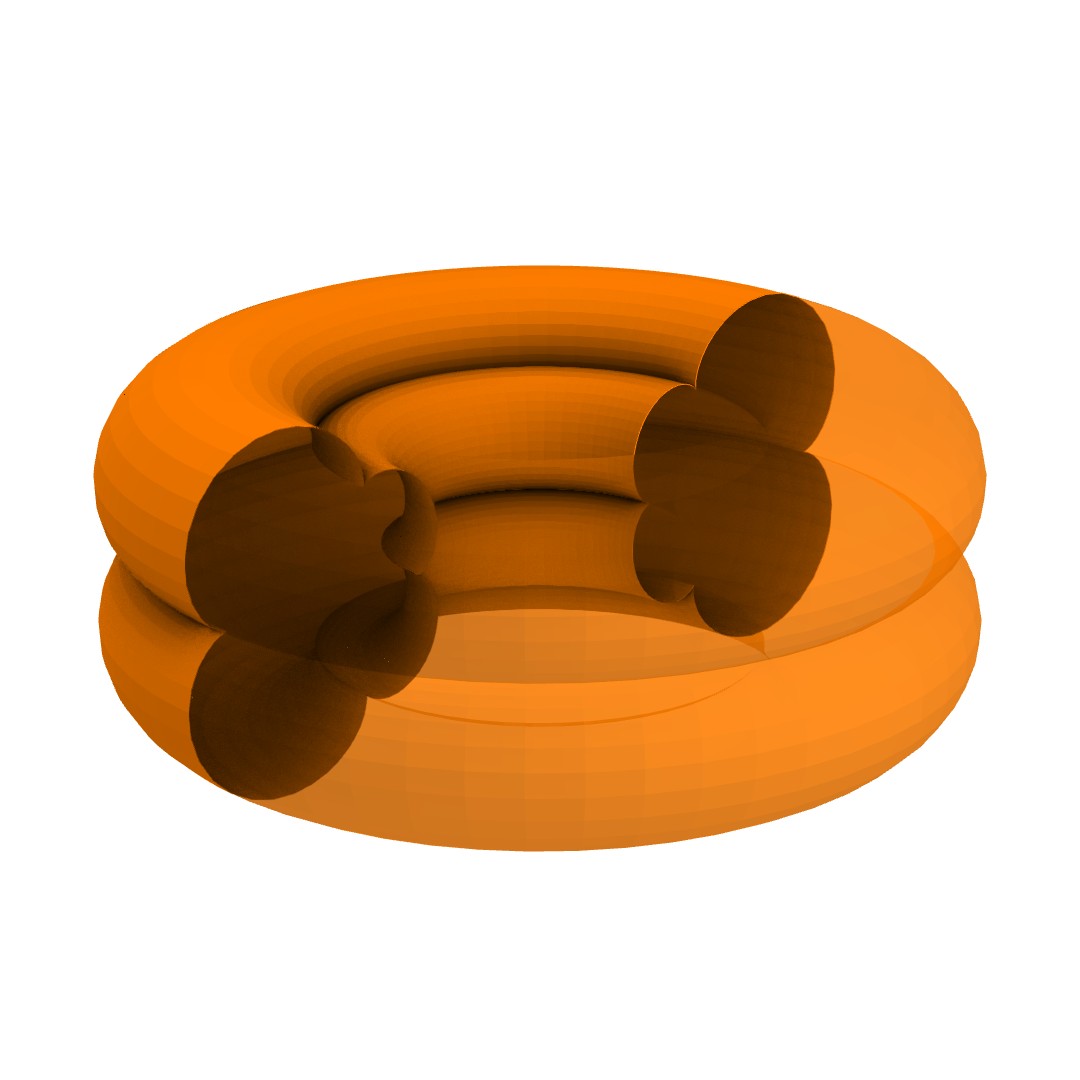}%
	}%
	\hfill
		\subfigure[][$K=-0.4$, first solution]{%
		\label{fig:S3_Mix1}
		\includegraphics[width=\columnwidth/4]{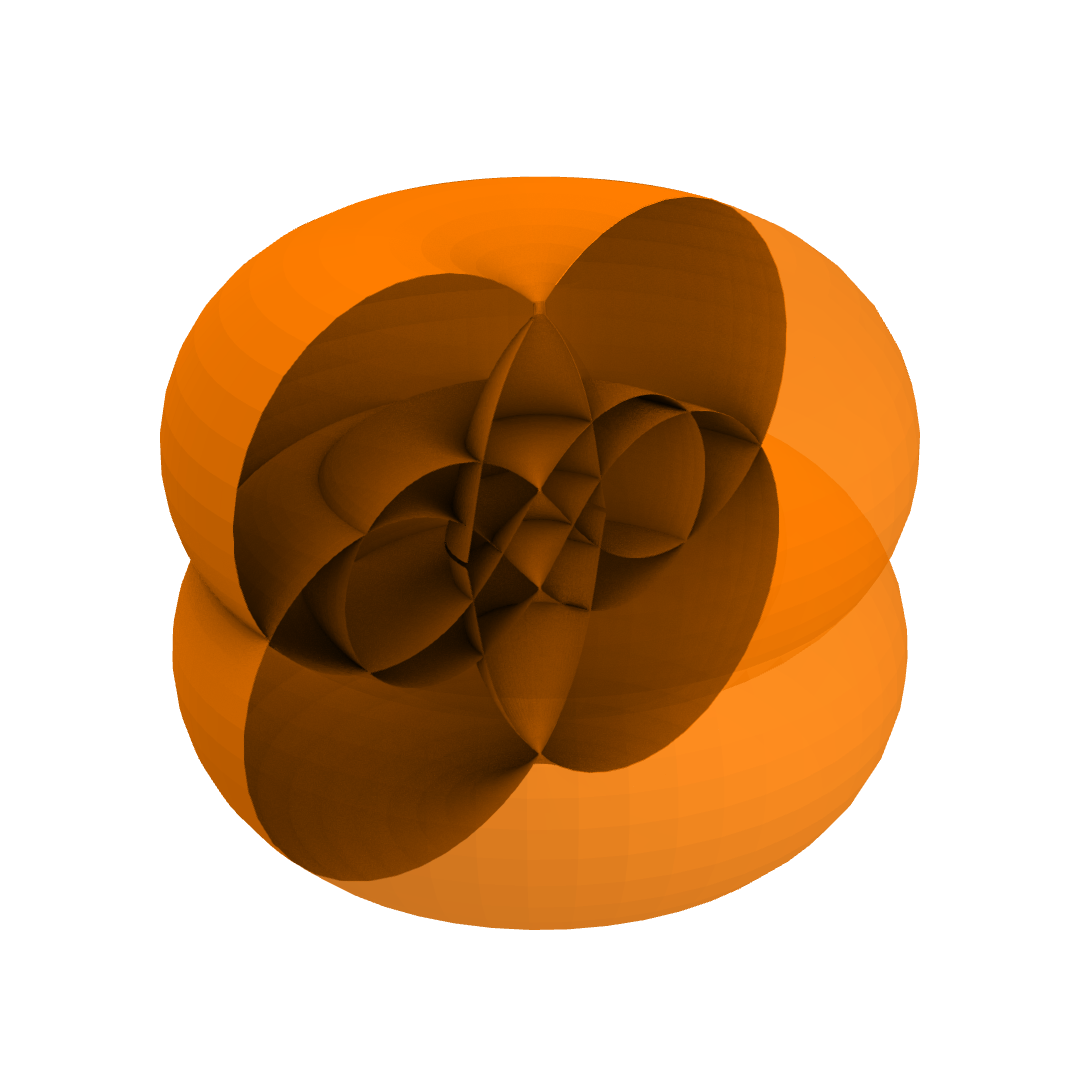}%
	}%
	\\
		\subfigure[][$K=-0.4$, second solution]{%
		\label{fig:S3_Mix2}
		\includegraphics[width=\columnwidth/4]{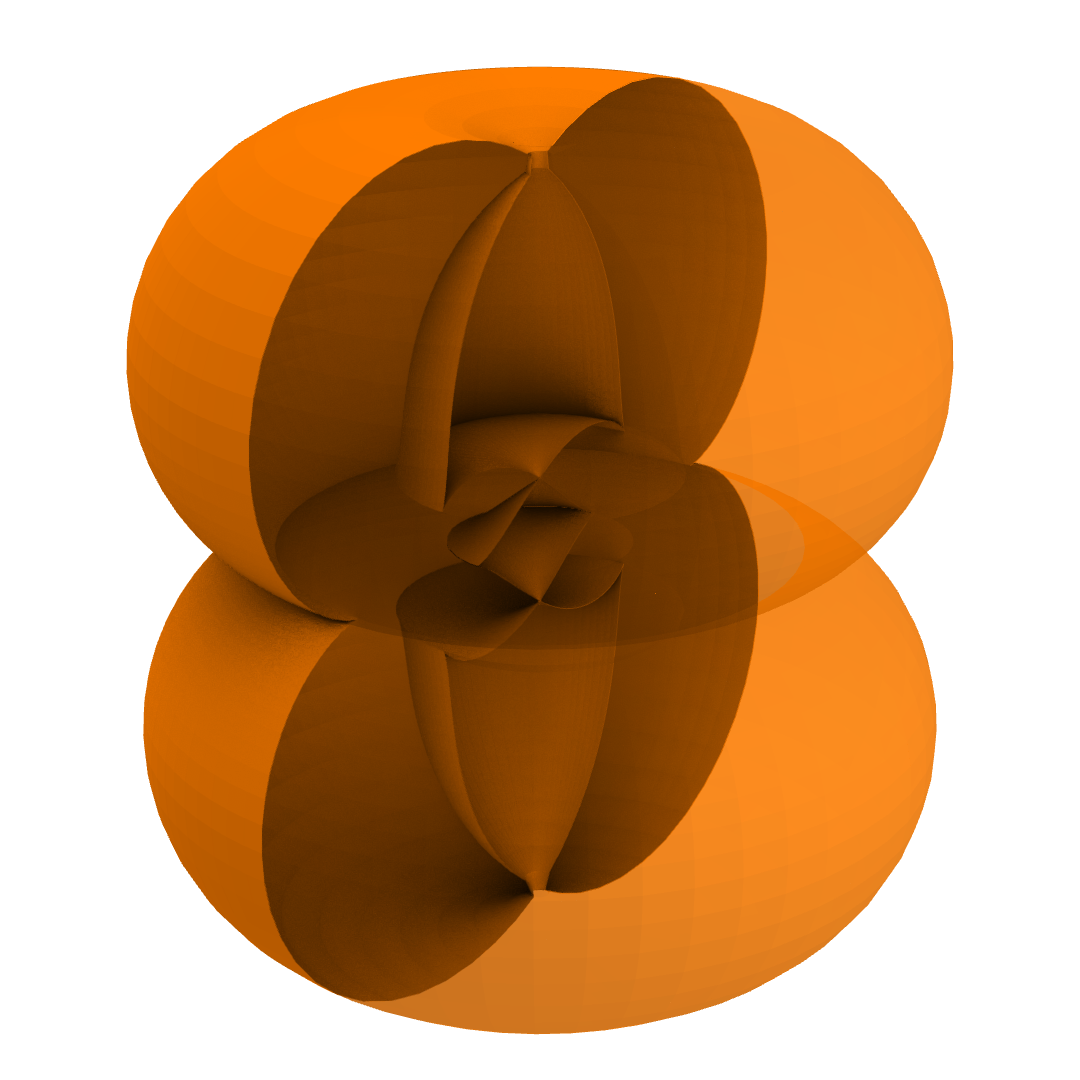}%
	}%
	\hfill 
		\subfigure[][$K=1$, $\jac{cn}{}$-solution]{%
				\label{fig:S3_Pos1}
		\includegraphics[width=\columnwidth/4]{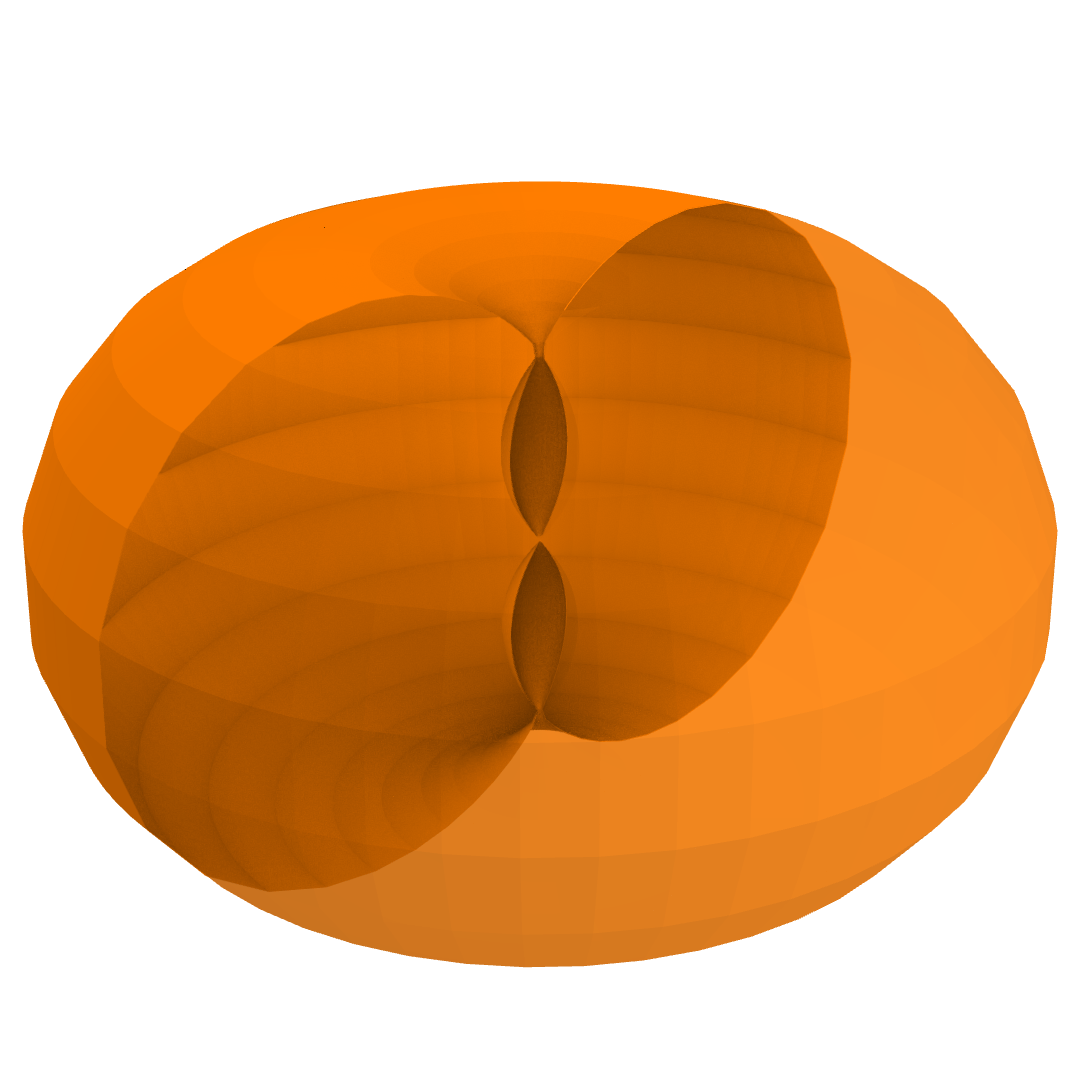}%
	}%
	\hfill
		\subfigure[][$K=1$, $\jac{dn}{}$-solution]{%
		\label{fig:S3_Pos2}
		\includegraphics[width=\columnwidth/4]{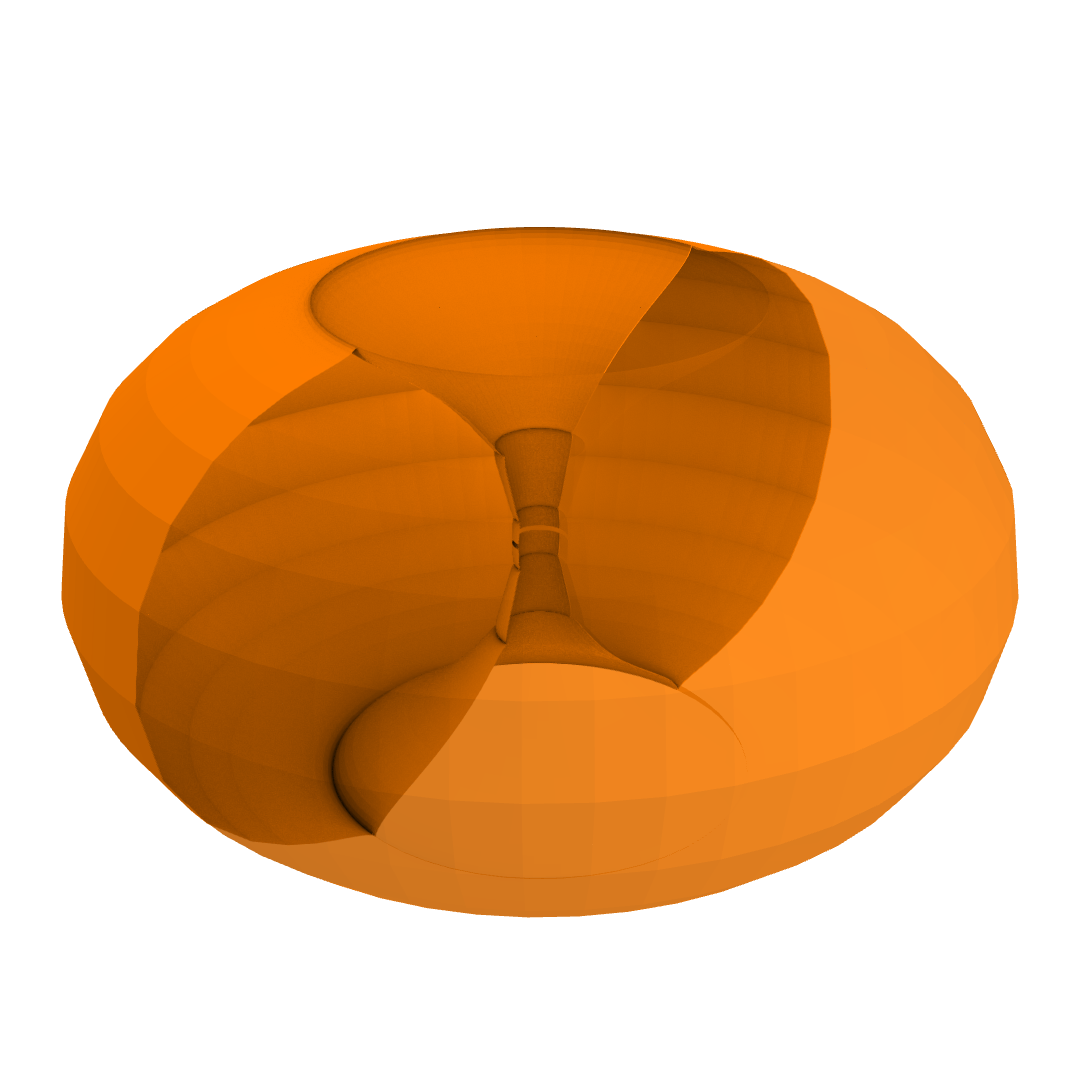}%
	}%
\caption{Stereographic projections of rotational surfaces of
different Gauss curvatures in $\S^3$. The figures are obtained
from the parametrisation given in \Cref{thm:Sphere}, with the
functions $r$ and $\psi$ from \Cref{tab:SphericalCase};
the meridian curves are shown in Figure \ref{fig:Curves_S3}.}%
	\label{fig:SphereSurfaces}%
\end{figure}

\subsection{CGC surfaces of elliptic rotation in \texorpdfstring{$\H^3$}{H3}}
 For surfaces of elliptic rotation in $\H^3$, we assume 
${\kappa_1 = -\kappa_2 = 1}$ and $\kappa=-1$ in \eqref{eq:MultiTable}, 
which amounts to choosing an orthonormal basis in the $4$-dimensional 
space ${\{v\in \R^{4,2}| (v,\q)=-1,~(v,\p)=0\}}$, viewed as a copy of 
$\R^{3,1}$.
In this case \eqref{eq:JEEd} reads $d^2 = r^2+1$ which poses no additional 
condition. Thus \eqref{eq:BoundsD} yields the cases
\begin{itemize}
	\item $K-1>0$ with $C>0$, 
	\item $K<0$ with $C-1<0$ and
	\item $K \in (0,1)$ with $C$ unrestricted. 
\end{itemize}
	
	By \Cref{prop:Bonnet}, every linear Weingarten 
surface with $\left|\tfrac{a+c}{2}\right| > |b|$ is parallel to 
a CGC surface with $K\neq 0$.
On the other hand, if $ac-b^2 <0$, then its parallel 
family also contains a pair of constant mean curvature $H>1$ or 
constant harmonic mean curvature surfaces $K/H <1$\footnote{Surfaces 
with $H\equiv 1$ or $K/H\equiv 1$ are \emph{linear Weingarten of 
Bryant type}, as are flat fronts with $K-1 \equiv 0$.}.  
			
	Constant mean curvature $H$ surfaces of elliptic rotation are 
considered in \cite{gomes1987}. In the case $H>1$, these arise in 
$1$-parameter families that are similar to that of Delaunay surfaces
in $\R^3$. By 
\Cref{prop:Bonnet}, these surfaces are parallel to the CGC surfaces 
considered here.
		
\begin{thm}\label{thm:Elliptic}
Every constant Gauss curvature $K\neq 0$ surface of 
elliptic rotation in $\H^3 \subset \R^{3,1}$ is given in an 
orthonormal basis by
\begin{align*}
(s, \theta) \mapsto \left( \sqrt{1+r^2(s)}~\cosh \psi(s), 
\sqrt{1+r^2(s)}~\sinh \psi(s), r(s) \cos \theta, r(s) \sin \theta 
\right), 
\end{align*}
where $r$, $\psi$ are listed in \Cref{tab:HyperbolicCaseElliptic}. 
\end{thm}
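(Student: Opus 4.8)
The plan is to follow the pattern of the proof of \Cref{thm:Sphere}, now applying \Cref{prop:NonIsoProp} to an elliptic rotation in $\H^3$. First I would set $\kappa=-1$ and $\kappa_1=-\kappa_2=1$ in \eqref{eq:MultiTable}; read in the orthonormal basis of $\R^{3,1}$ adapted to the rotation (as in the discussion preceding the theorem), the parametrisation \eqref{eq:fParam} then becomes the map displayed in the statement, and \eqref{eq:JEEd} reduces to $d^2=1+r^2$, which imposes no restriction beyond $r\in\R$. By \Cref{prop:NonIsoProp} the coordinate functions satisfy \eqref{eq:JEE} and \eqref{eq:JEEpsi} for these parameter values, and the bound \eqref{eq:BoundsD} separates the admissible integration constants into the three regimes listed above the theorem: $K>1$ forces $C>0$, $K<0$ forces $C<1$, and for $K\in(0,1)$ the constant $C$ may be any real number.

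Next I would specialise the solutions of \eqref{eq:JEE} found in \Cref{sec:NonIsoCases}. For $K>1$ (so $K+\kappa>0$) the $\jac{cn}{}$-type solution \eqref{eq:rSolGaussPos} applies; for $K<0$ the $\jac{cn}{}$-type solution \eqref{eq:rSolGaussNeg} applies; and on the intermediate range $0\le C\le 1$ of the mixed-sign case $K\in(0,1)$, where both of these are imaginary, the $\jac{sc}{}$-type solution \eqref{eq:SNSol} takes over, while for $C$ outside that range one of \eqref{eq:rSolGaussPos}, \eqref{eq:rSolGaussNeg} stays real. In every regime the naive argument of the Jacobi function, or its modulus, may be imaginary, so I would apply the Jacobi transformations of \Cref{app:JacobiFunctions} to rewrite each solution in terms of a real modulus $p\in[0,1]$ and a real argument, which changes the functional type ($\jac{cn}{}$, $\jac{dn}{}$, $\jac{cd}{}$, $\jac{sc}{}$, and so on). Unlike in the proof of \Cref{thm:Sphere}, there is no constraint $r^2<1$ available here to discard one of the two transformed branches, so both survive; this is why \Cref{tab:HyperbolicCaseElliptic} has more rows than \Cref{tab:SphericalCase}, and the content of the theorem is exactly that the resulting list is exhaustive.

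With $r$ fixed in each case, I would recover $\psi$ by integrating \eqref{eq:JEEpsi} as in Subsection \ref{punkt:PosCurv}: substituting the explicit $r$ turns \eqref{eq:JEEpsi} into the derivative of an affine function of $s$ plus a term of the form $\left(1-k\,\jac{sn}{p}^2\right)^{-1}$, whose primitive is the incomplete elliptic integral of the third kind $\Pi$, and the transformation formulas for $\Pi$ in \Cref{app:JacobiFunctions} then put $\psi$ into the tabulated form. The hard part will be the bookkeeping in the case analysis --- matching each sign regime of $K$ and each range of $C$ to the Jacobi transformation that sends the modulus into $[0,1]$, simplifying radicals with $\kappa_2=\kappa\kappa_1$, and confirming that the parametrisations extend smoothly across the zeros of $r$, where the surface meets its axis ($d^2=1+r^2$ never vanishes, so no further singularities arise); once the correct normal forms are pinned down, the remaining computations are routine.
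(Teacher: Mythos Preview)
Your proposal is correct and follows essentially the same route as the paper, which does not give a separate proof of \Cref{thm:Elliptic} but relies on the scheme of the proof of \Cref{thm:Sphere} together with the Remark following the theorem; that Remark spells out exactly the four-way split you describe in the mixed case $K\in(0,1)$ (two $\jac{nc}{}$-branches from \eqref{eq:rSolGaussPos} and \eqref{eq:rSolGaussNeg} for $C<0$ and $C>1$, and two $\jac{sc}{}$-branches from \eqref{eq:SNSol} for $C\in[0,1]$ after the further bifurcation of its modulus). Your observation that the absence of an $r^2<1$ constraint (since $d^2=1+r^2>0$) is what prevents discarding branches, in contrast to the spherical proof, is precisely the point.
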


\begin{table}[ht]%
\centering
\begin{tabular}{l|ll}
$K<0$ 
&$r(s)=\sqrt{\frac{p^2}{1-K-p^2}}~\jac{cn}{p}\left(\FAC~s\right)$
&$\psi(s)=\tfrac{K}{\FAC}~\Pii{\frac{p^2}{1-K}}{p}{\FAC~s}-Ks$ \cr
&\quad $\FAC=\sqrt{\frac{(-K)(1-K)}{1-K-p^2}}$, &$p\in [0,1]$ \cr
&$r(s)= \sqrt{\frac{1}{(1-K)p^2-1}}~\jac{dn}{p}\left(\FAC~s\right)$
&$\psi(s)=\tfrac{K}{\FAC}~\Pii{\frac{1}{1-K}}{p}{\FAC~s}-Ks$ \cr
&\quad $\FAC=\sqrt{\frac{(-K)(1-K)}{(1-K)p^2-1}}$, &$p\in\left[\sqrt{\frac{1}{1-K}},1\right]$ \cr
\hline 
$K\in(0,1)$
&$r(s)=\sqrt{\frac{1-p^2}{K-1+p^2}}~\jac{nc}{p}\left(\FAC~s\right)$
&$\psi(s)= \tfrac{\FAC(1-p^2)}{K}~\Pii{\tfrac{1-K}{\FAC^2}}{p}{\FAC~s}-\tfrac{\FAC^2 p^2}{1-K}s$ \cr
&\quad $\FAC=\sqrt{\frac{K(1-K)}{K-1+p^2}}$, &$p\in[\sqrt{1-K},1]$ \cr
&$r(s)=\sqrt{\frac{1-p^2}{1+p^2(K-1)}}~\jac{sc}{p}\left(\FAC~s\right)$
&$\psi(s)= \tfrac{\FAC(1-p^2)}{K}~\Pii{\tfrac{\FAC^2p^2}{1-K}}{p}{\FAC~s}-s$ \cr
&\quad $\FAC=\sqrt{\frac{K(1-K)}{1+p^2(K-1)}}$, &$p\in[0,1]$ \cr
&$r(s)=\sqrt{\frac{1-p^2}{1-Kp^2}}~\jac{sc}{p}\left(\FAC~s\right)$
&$\psi(s)= \tfrac{\FAC(1-p^2)}{K-1}~\Pii{\tfrac{\FAC^2p^2}{K}}{p}{\FAC~s}$ \cr
&\quad $\FAC=\sqrt{\frac{K(1-K)}{1-Kp^2}}$, &$p\in[0,1]$ \cr
&$r(s)= \sqrt{\frac{1-p^2}{p^2-K}}~\jac{nc}{p}\left(\FAC~s\right)$
&$\psi(s)= \tfrac{\FAC(1-p^2)}{K-1}~\Pii{\tfrac{K}{\FAC^2}}{p}{\FAC~s}+\tfrac{\FAC^2(1-p^2)}{1-K}s$ \cr
&\quad $\FAC=\sqrt{\frac{K(1-K)}{p^2-K}}$, &$p\in [\sqrt{K},1]$ \cr 
\hline
$K=1$ 
&$r(s)=\sqrt{\tfrac{1-p^2}{p^2}}~\cosh~\left(\tfrac{s}{p}\right)$ &$\psi(s)=s-\operatorname{artanh}\left(p\tanh\left(\tfrac{s}{p}\right)\right)$ \cr
&\quad $p\in (0,1)$ &Snowman fronts\cr
&$r(s)=\sqrt{1-p^2}~\sinh~(ps)$ &$\psi(s)=\tfrac{1}{1-p^2}\left(s- \operatorname{artanh}(p~\tanh(ps)\right)$ \cr
&\quad $p\in(0,1)$ &Hourglass fronts \cr
\hline
$K>1$
&$r(s)=\sqrt{\frac{p^2}{K-p^2}}~\jac{cn}{p}\left(\FAC~s\right)$
&$\psi(s)=\tfrac{K-1}{\FAC}~\Pii{\frac{p^2}{K}}{p}{\FAC~s}-Ks$ \cr
&\quad $\FAC=\sqrt{\frac{K(K-1)}{K-p^2}}$, &$p\in[0,1]$ \cr
&$r(s)=\sqrt{\frac{1}{Kp^2-1}}~\jac{dn}{p}\left(\FAC~s\right)$ 
&$\psi(s)=\tfrac{K-1}{\FAC}~\Pii{\frac{1}{K}}{p}{\FAC~s}-Ks$ \cr
&\quad $\FAC=\sqrt{\frac{K(K-1)}{Kp^2-1}}$, &$p\in\left[\sqrt{\frac{1}{K}},1\right]$ 
\end{tabular}
\caption{Parameter functions of CGC surfaces of elliptic rotation in \texorpdfstring{$\H^3$}{H3}.}
\label{tab:HyperbolicCaseElliptic}
\end{table}

\begin{bem}
The bifurcation in the mixed case $K\in (0,1)$ is of a slightly 
different flavor than before. Since $C$ is unbounded, three cases 
emerge: for $C<0$ the solution of Subsection \ref{punkt:PosCurv} is real 
with $p>1$ and imaginary argument, which yields an $\jac{nc}{}$-type
solution. Similarly, for $C>1$, the solution of Subsection \ref{punkt:NegCurv} 
takes an $\jac{nc}{}$-form. If, however, $C\in [0,1]$ the solution to 
\eqref{eq:JEE} is given in Subsection \ref{punkt:MixCurv}, with 
$p^2 \in [0, \infty)$ which splits again in two cases with moduli 
in $[0,1]$, which yields the $\jac{sc}{}$-type solutions. 
\end{bem}
	
\begin{figure}%
\centering
\subfigure[][$K=-1$, $\jac{cn}{}$-solution]{%
\label{fig:Curves_H3_ell_NegCN}%
\includegraphics[width=\columnwidth/5]{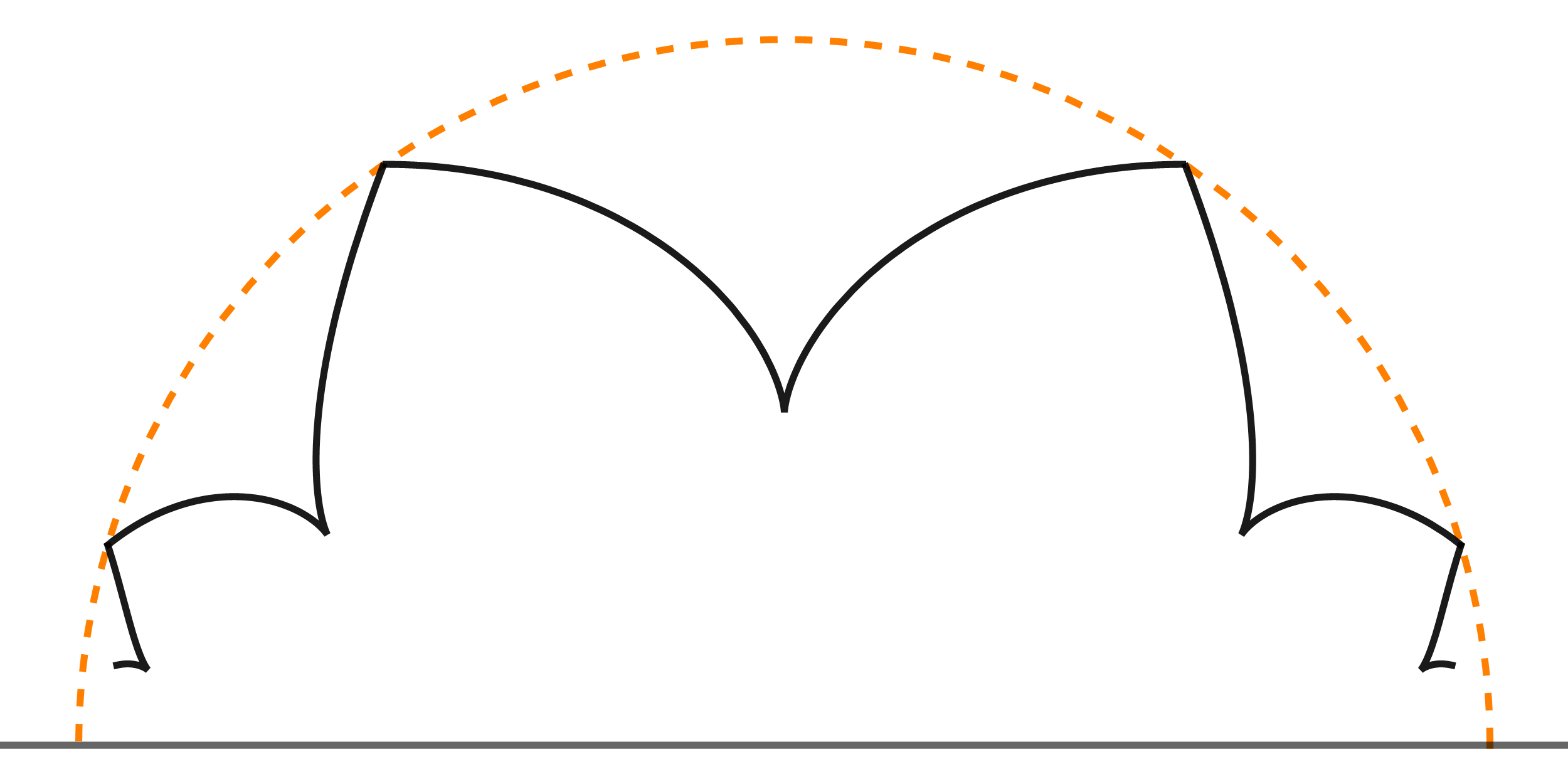}%
}%
\hfill 
\subfigure[][$K=-1$, $\jac{dn}{}$-solution]{%
\includegraphics[width=\columnwidth/5]{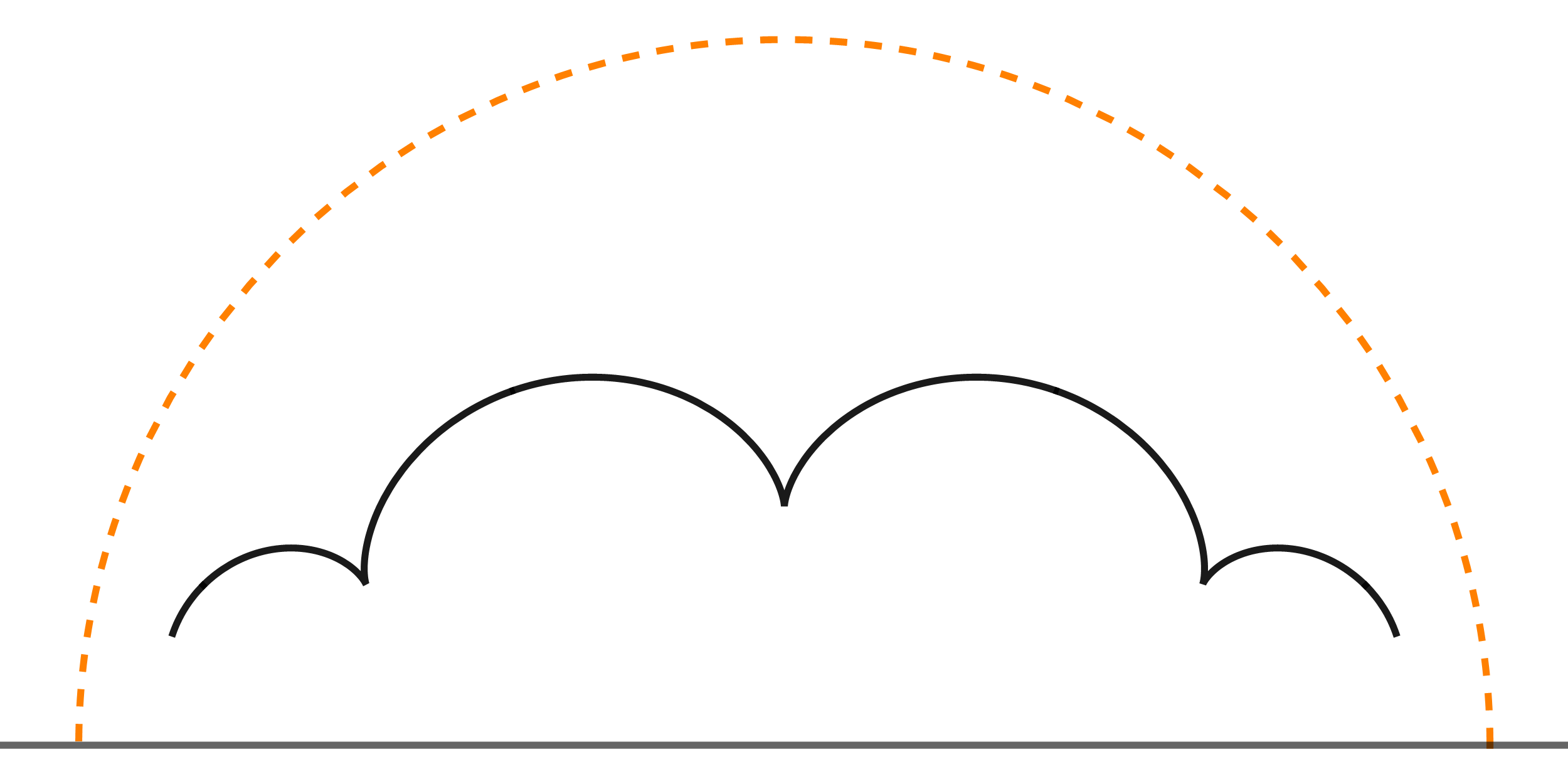}%
}%
\hfill
\subfigure[][$K=0.4$, first $\jac{sc}{}$-solution]{%
\label{fig:Curves_H3_ell_MixSC}%
\includegraphics[width=\columnwidth/5]{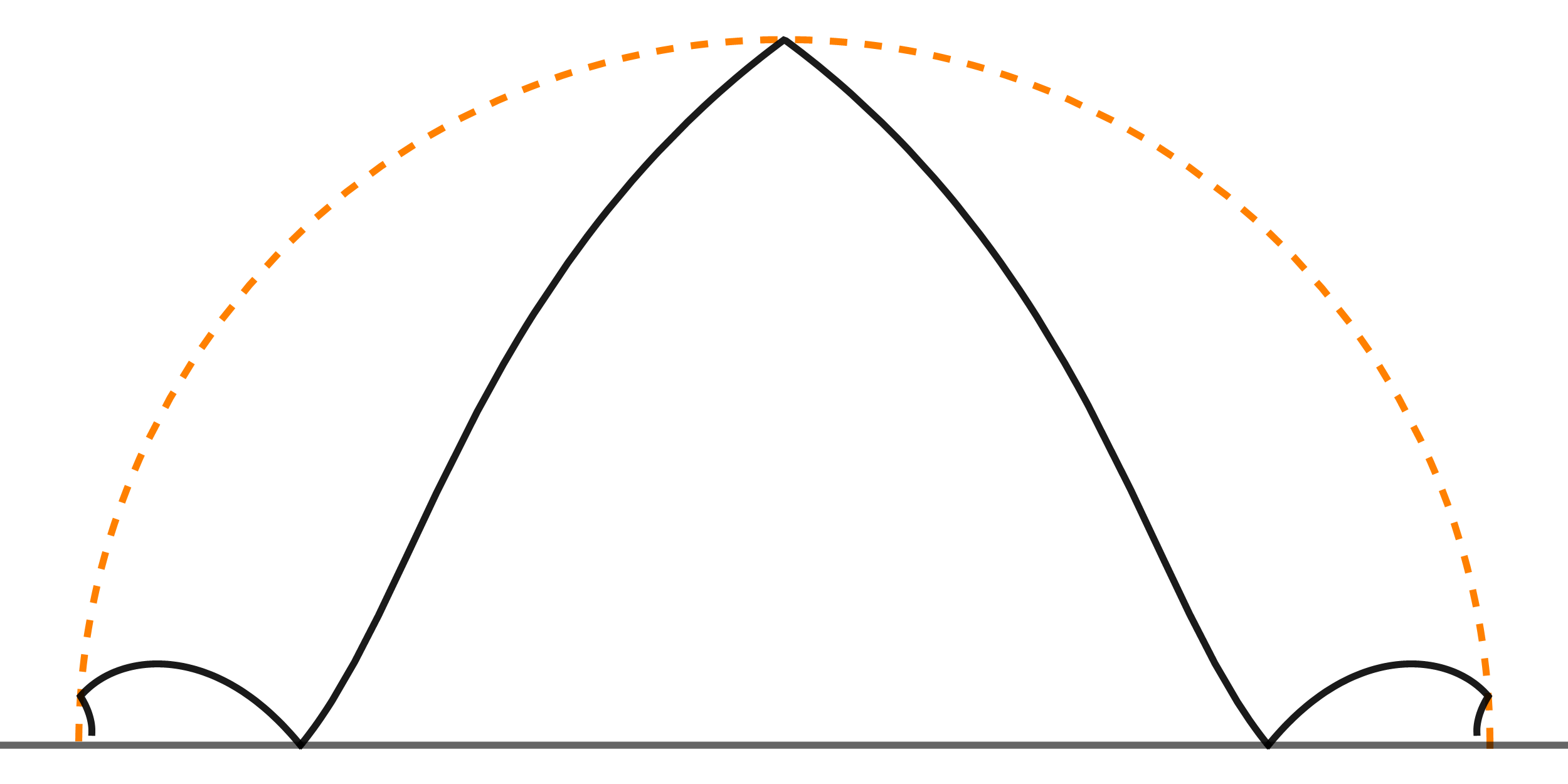}%
}%
\hfill
\subfigure[][$K=0.4$, second $\jac{sc}{}$-solution]{%
\includegraphics[width=\columnwidth/5]{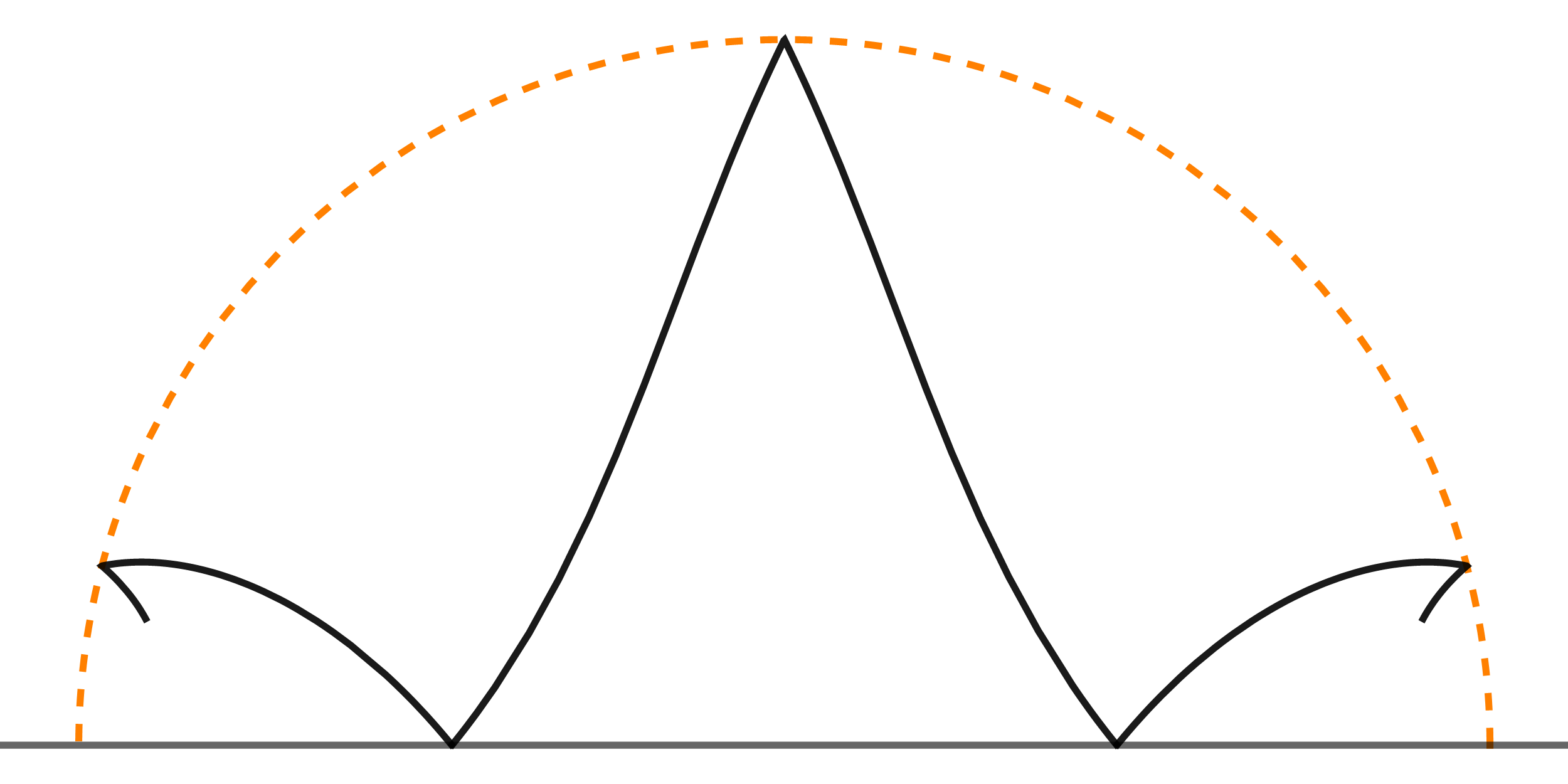}%
}%
\\
\subfigure[][$K=0.4$, first $\jac{nc}{}$-solution]{%
\includegraphics[width=\columnwidth/5]{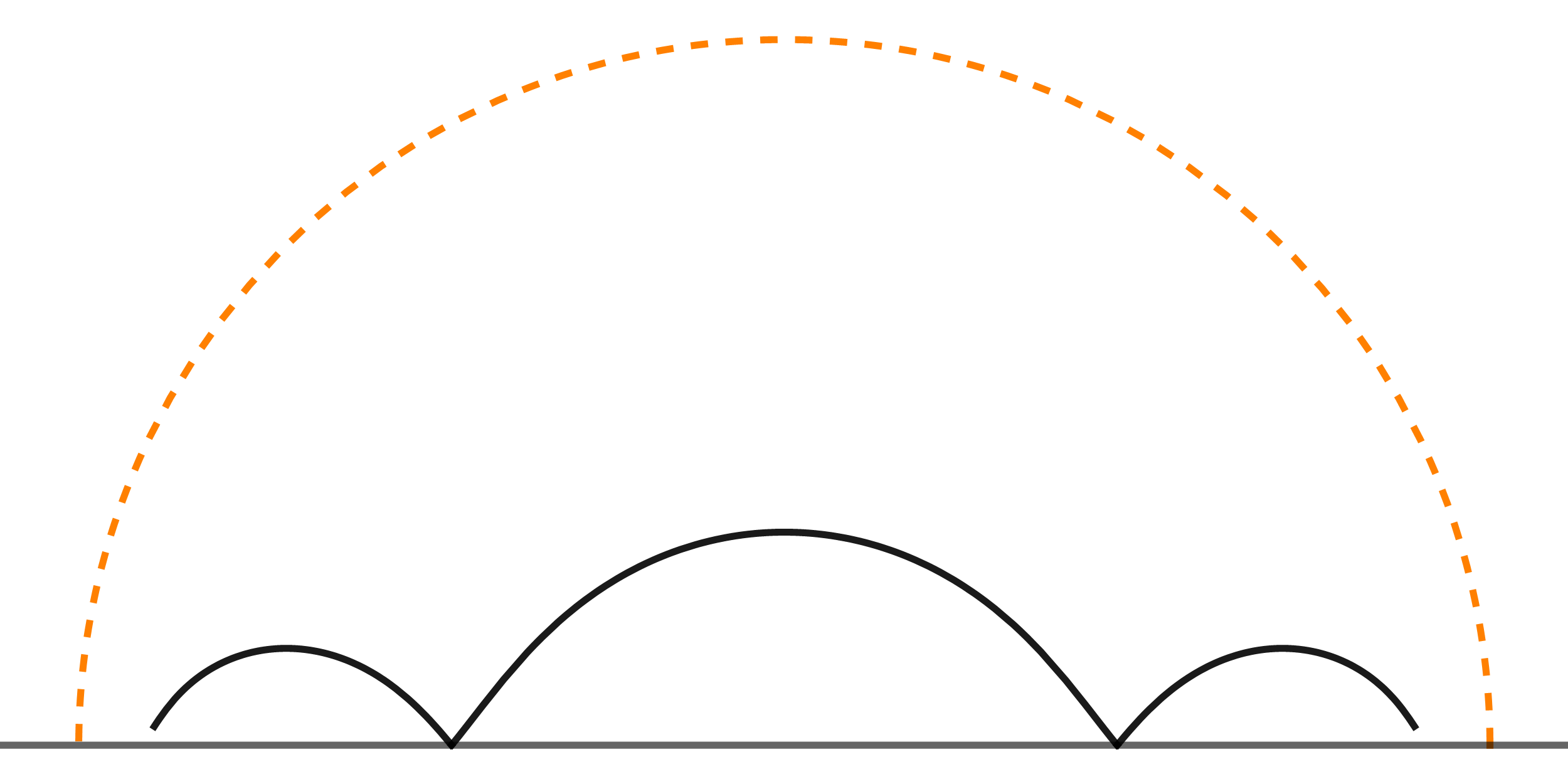}%
}%
\hfill
\subfigure[][$K=0.4$, second $\jac{nc}{}$-solution]{%
\includegraphics[width=\columnwidth/5]{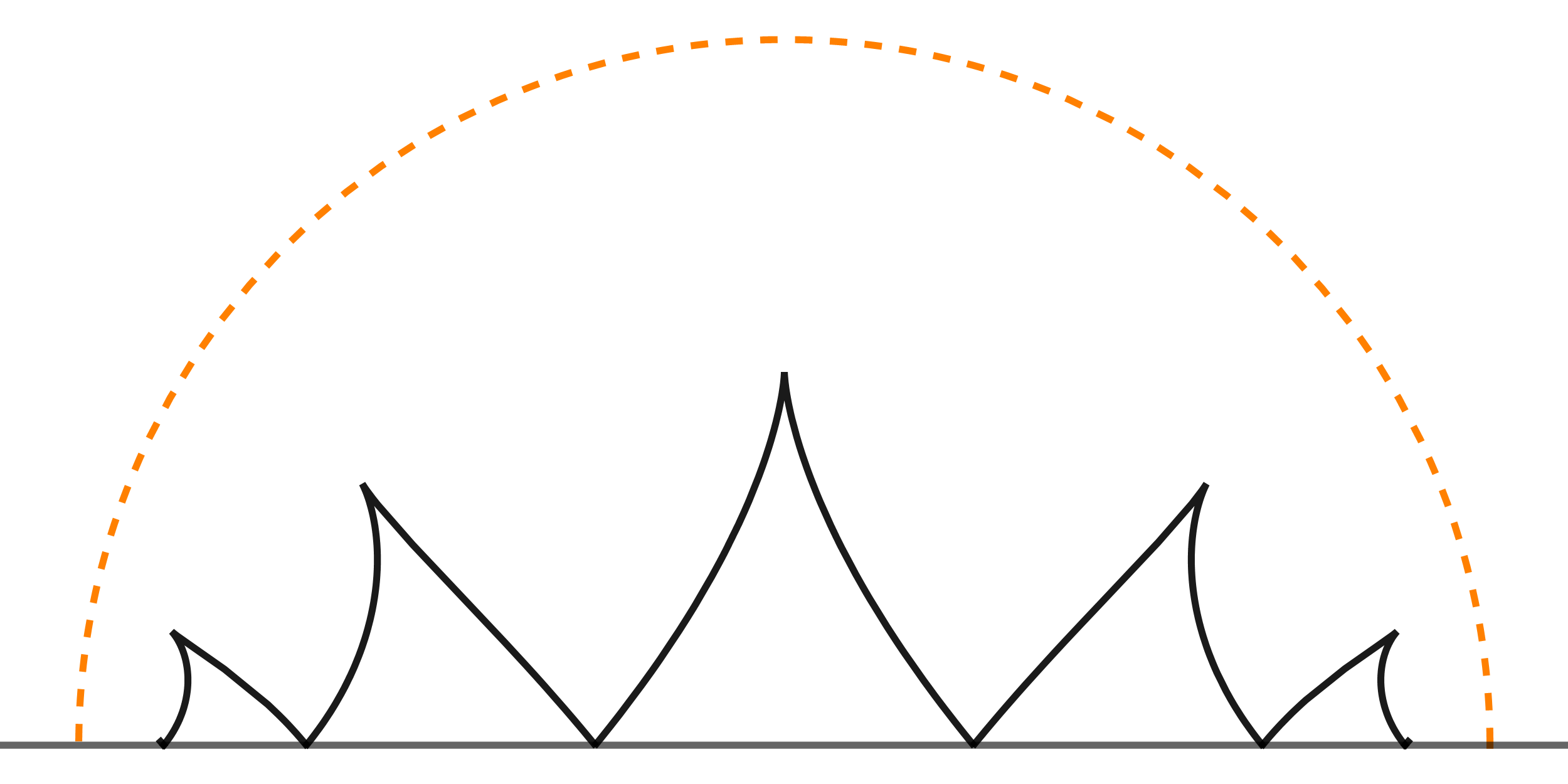}%
}%
\hfill
\subfigure[][$K=2$, $\jac{cn}{}$-solution]{%
\label{fig:Curves_H3_ell_PosCN}
\includegraphics[width=\columnwidth/5]{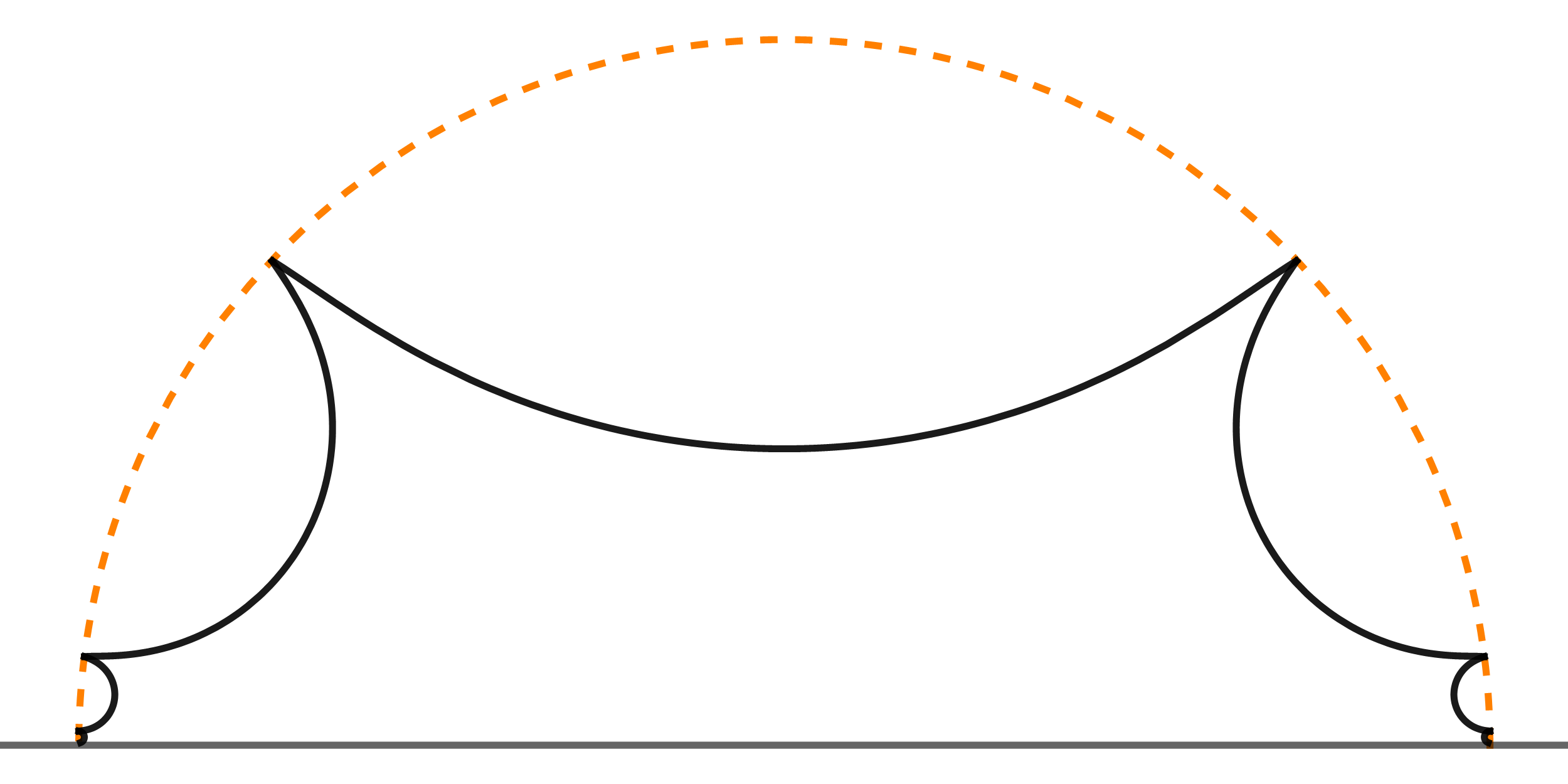}%
}%
\hfill
\subfigure[][$K=2$, $\jac{dn}{}$-solution]{%
\includegraphics[width=\columnwidth/5]{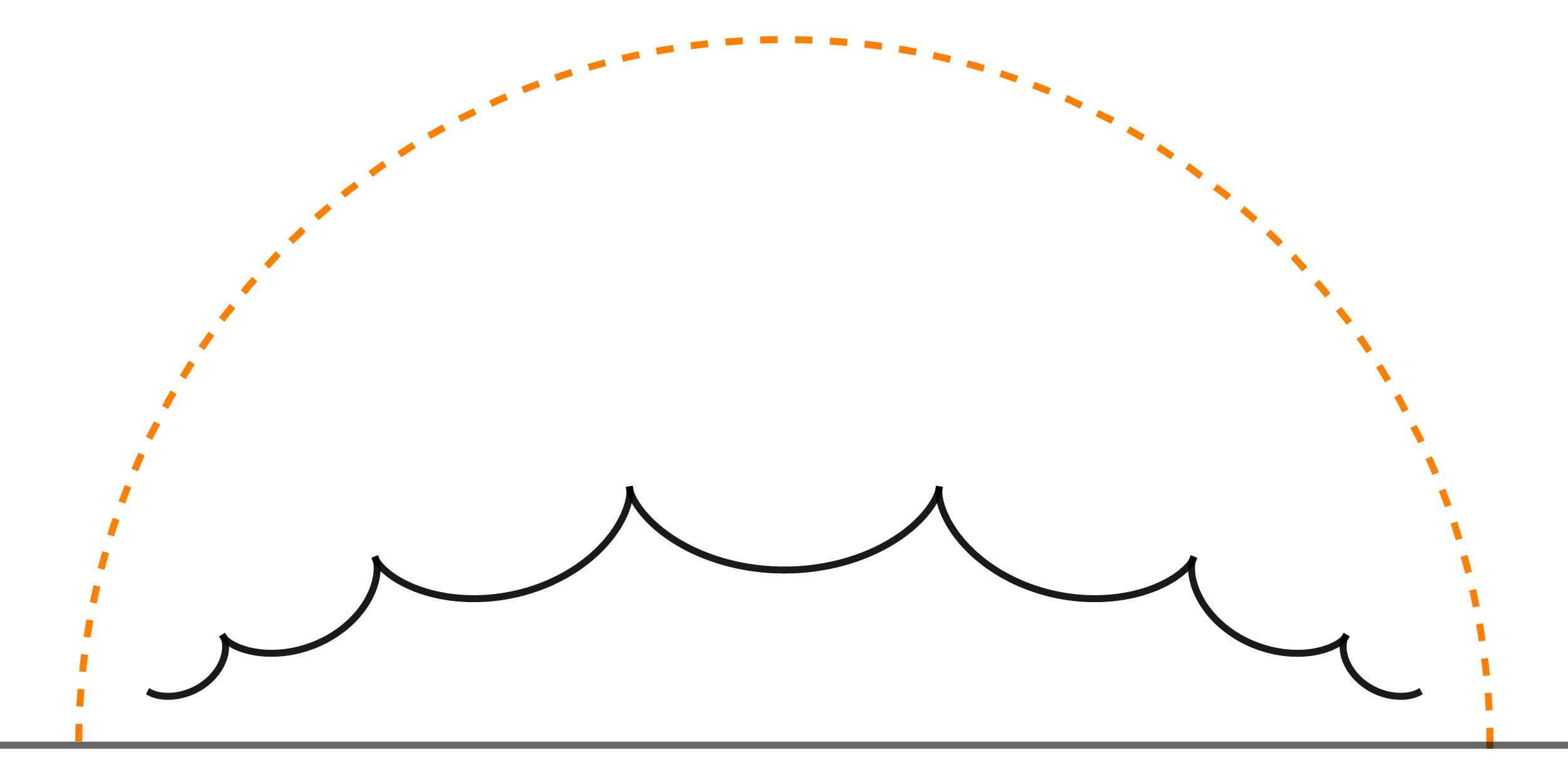}%
}%
\caption{Profile curves of CGC
 surfaces of elliptic rotation in the half plane model:
 the dashed line represents the axis of rotation. The
 profile curves are obtained using the solutions
 in \Cref{tab:HyperbolicCaseElliptic}. The elliptic
 rotational surfaces corresponding to the profile curves in Figures
 \ref{fig:Curves_H3_ell_NegCN}, \ref{fig:Curves_H3_ell_MixSC}
 and \ref{fig:Curves_H3_ell_PosCN} are displayed in Figures
  \ref{fig:Ell_Neg}, \ref{fig:Ell_Mix} and \ref{fig:Ell_Pos},
 respectively.}%
\label{fig:Curves_H3_ell}%
\end{figure}

\subsection{CGC surfaces of hyperbolic rotation in \texorpdfstring{$\H^3$}{H3}}	
For surfaces of hyperbolic rotation we assume 
$ \kappa_1 = -\kappa_2 = -1$ and, as before, $\kappa =-1$ in 
\eqref{eq:MultiTable}.
In this case, because $d^2 = r^2 - 1$ according to \eqref{eq:JEEd}, 
we are restricted to solutions of \eqref{eq:JEE} with $r^2>1$. 
Thus \eqref{eq:BoundsD} yields the three cases
\begin{itemize}
\item $K-1>0$, which implies $-C>K-1$, 
\item $K<0$, implying $-C<K-1$, and
\item $K \in (0,1)$ with $C$ unrestricted. 
\end{itemize}

\begin{thm}\label{thm:Hyperbolic}
Every constant Gauss curvature $K\neq 0$ surface of hyperbolic 
rotation in $\H^3\subset \R^{3,1}$ is given in an orthonormal basis by
\begin{align*}
(s, \theta) \mapsto \left(r(s) \cosh \theta, r(s) \sinh \theta, 
\sqrt{r^2(s)-1}~\cos \psi(s), \sqrt{r^2(s)-1}~\sin\psi(s)\right), 
\end{align*}
where $r$, $\psi$ are listed in \Cref{tab:HyperbolicCaseHyperbolic}. 
\end{thm}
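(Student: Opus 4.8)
The plan is to follow the template of the proofs of \Cref{thm:Sphere} and \Cref{thm:Elliptic}. A surface of hyperbolic rotation in $\H^3$, modelled classically as the hyperboloid $(x,x)=-1$ in $\R^{3,1}$, corresponds to choosing an orthonormal basis of $\spn{\p,\q}^\perp$; concretely this means setting $\kappa=-1$, $\kappa_1=-1$ and $\kappa_2=1$ in \eqref{eq:MultiTable}, so that $\v_1$ is timelike, $\rho_1$ acts as a boost in $\Pi_1$ and $\rho_2$ as an ordinary rotation in $\Pi_2$. Writing $\v_1=e_0$, $\e_1=e_1$, $\v_2=e_2$, $\e_2=e_3$ and substituting the actions of $\rho_1$ and $\rho_2$ into the parametrisation \eqref{eq:fParam} directly produces the claimed formula, provided $d=\sqrt{r^2-1}$. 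And indeed \Cref{prop:NonIsoProp} applies (the surface is non-parabolic rotational, $\H^3$ is non-Euclidean, $K\neq0$) and specialises \eqref{eq:JEEd} to $d^2=r^2-1$, the light cone condition; simultaneously \eqref{eq:JEE} and \eqref{eq:JEEpsi} become the concrete elliptic ODE for $r$ and the closed expression for $\psi'$, with the side constraint $r^2>1$.

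First I would record, as in the paragraph preceding the theorem, the three sign regimes forced by \eqref{eq:BoundsD} together with $r^2>1$: $K-1>0$ with $-C>K-1$; $K<0$ with $-C<K-1$; and $K\in(0,1)$ with $C$ unrestricted. In each regime I would take the relevant candidate solution of \eqref{eq:JEE} --- namely \eqref{eq:rSolGaussPos} or \eqref{eq:rSolGaussNeg}, and \eqref{eq:SNSol} when $K\in(0,1)$ and $C\in[0,1]$ --- specialise the constants, and rewrite it in real form by means of the Jacobi transformations collected in \Cref{app:JacobiFunctions}. The constraint $r^2>1$ then selects the admissible branches: since the $\jac{cn}{}$- and $\jac{dn}{}$-amplitude solutions are bounded, one expects reciprocal- and quotient-type functions (such as $\jac{nc}{}$, $\jac{nd}{}$ or $\jac{sc}{}$) to carry the solutions here, together with a bifurcation of the solution space in the mixed regime analogous to the ones already encountered. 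The function $\psi$ is then obtained in each case by integrating \eqref{eq:Psi} exactly as in \Cref{punkt:PosCurv}, \Cref{punkt:NegCurv} and \Cref{punkt:MixCurv}, yielding incomplete elliptic integrals of the third kind; finally I would reparametrise by the modulus $p$ in place of the integration constant $C$ and assemble the results in \Cref{tab:HyperbolicCaseHyperbolic}.

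The step I expect to be the main obstacle is the bookkeeping in the mixed regime $K\in(0,1)$. There $C$ ranges over all of $\R$, so it must be split further into $C<0$, $C\in[0,1]$ and $C>1$; in each sub-range one has to apply the correct Jacobi transformation --- and the matching transformation of the elliptic integral for $\psi$ --- keep the resulting modulus in $[0,1]$, and verify that the produced $r$ genuinely satisfies $r^2>1$ throughout its domain, discarding the branch that fails (just as the spurious $\jac{dc}{}$-form was discarded in the proof of \Cref{thm:Sphere}). This is bookkeeping rather than new mathematics once the patterns of \Cref{thm:Sphere} and \Cref{thm:Elliptic} are in hand, but it is where sign and modulus errors are easiest to commit.
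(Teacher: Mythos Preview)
Your approach is correct and matches the paper's (implicit) argument: the paper gives no separate proof for \Cref{thm:Hyperbolic} but relies on the template of \Cref{thm:Sphere} together with the Remark following the theorem, which records exactly the mixed-case splitting you describe (for $C<0$ and $C>1$ the solutions of Subsections \ref{punkt:PosCurv} and \ref{punkt:NegCurv} are real and yield $\jac{nc}{}$-type solutions; for $C\in[0,1]$ one obtains $\jac{cd}{}$/$\jac{dc}{}$ forms and the constraint $r^2>1$ selects the $\jac{dc}{}$ branch --- the mirror image of the selection made in the spherical case). One small correction to your expectations: in the definite regimes $K<0$ and $K>1$ the solutions in \Cref{tab:HyperbolicCaseHyperbolic} are \emph{not} reciprocal-type but ordinary $\jac{dn}{}$-type; the condition $r^2>1$ is met because the amplitude coefficient $\sqrt{1/(1+(K-1)p^2)}$ (resp.\ $\sqrt{1/(1-Kp^2)}$) is at least $1$ on the stated $p$-range, and $\jac{dn}{p}$ takes values in $[q,1]$. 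So the selection mechanism in those cases is a restriction on the modulus $p$ rather than a passage to $\jac{nc}{}$, $\jac{nd}{}$ or $\jac{sc}{}$.
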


\begin{table}[ht]
\centering
\begin{tabular}{l|lll}
$K<0$ 
&$r(s)=\sqrt{\frac{1}{1+(K-1)p^2}}~\jac{dn}{p}\left(\FAC~s\right)$ &$\psi(s)=\tfrac{K}{\FAC}~\Pii{\frac{1}{1-K}}{p}{\FAC~s}-Ks$ \cr
&\quad $\FAC=\sqrt{\tfrac{K(K-1)}{1+(K-1)p^2}}$, &$p\in\left[0,\sqrt{\frac{1}{1-K}}\right]$  \cr
\hline
$K\in (0,1)$ 
&$r(s)=\sqrt{\tfrac{q^2}{K-p^2}}~\jac{nc}{p}\left(\FAC~s\right)$ &$\psi(s)=\tfrac{\FAC^2 q^2}{1-K}~s+\tfrac{\FAC q^2}{K-1}~\Pii{-\tfrac{K}{\FAC^2}}{p}{\FAC~s}$ \cr
&\quad $\FAC=\sqrt{\tfrac{K(1-K)}{K-p^2}}$, &$p\in\left[0,\sqrt{K}\right]$  \cr
&$r(s)=\sqrt{\tfrac{1}{K-p^2(K-1)}}~\jac{dc}{p}(\FAC~s)$ &$\psi(s)=\tfrac{1}{\FAC}~\Pii{\frac{K}{K-1}}{p}{\FAC~s}-s$ \cr
&\quad $\FAC=\sqrt{\tfrac{K(1-K)}{K-p^2(K-1)}}$, &$p\in[0,1]$  \cr
&$r(s)=\sqrt{\tfrac{1}{1- K q^2}}~\jac{dc}{p}(\FAC~s)$ &$\psi(s)=\tfrac{1}{\FAC}~\Pii{\frac{K-1}{K}}{p}{\FAC~s}$ \cr
&\quad $\FAC=\sqrt{\tfrac{K(1-K)}{1-K q^2}}$, &$p\in[0,1]$  \cr
&$r(s)= \sqrt{\tfrac{q^2}{1-K-p^2}}~\jac{nc}{p}\left(\FAC~s\right)$ &$\psi(s)= \tfrac{\FAC^2 p^2}{K-1}~s + \tfrac{\FAC q^2}{K}~\Pii{\tfrac{K-1}{\FAC^2}}{p}{\FAC~s}$\cr
&\quad $\FAC=\sqrt{\tfrac{K(1-K)}{1-K-p^2}}$, &$p\in\left[0,\sqrt{1-K}\right]$  \cr
\hline
$K=1$ 
&$r(s)=\sqrt{1+p^2}~\cosh~\left(ps\right)$ &$\psi(s)=s-\operatorname{arctan}\left(\tfrac{1}{p}\tanh\left(ps\right)\right)$ \cr
&\quad $p\in (0,\infty)$ &Peach fronts\cr
\hline
$K>1$ 
&$r(s)=\sqrt{\frac{1}{1-Kp^2}}~\jac{dn}{p}\left(\FAC~s\right)$ &$\psi(s)=\tfrac{K-1}{\FAC}~~\Pii{\frac{1}{K}}{p}{\FAC~s}-Ks$ \cr
&\quad $\FAC=\sqrt{\frac{K(K-1)}{1-Kp^2}}$, &$p\in\left[0,\sqrt{\frac{1}{K}}\right]$
\end{tabular}
\caption{Parameter functions of CGC surfaces of hyperbolic rotation in \texorpdfstring{$\H^3$}{H\^3}.}
\label{tab:HyperbolicCaseHyperbolic}
\end{table}

\begin{bem}
The bifurcation in the mixed case $K\in (0,1)$ is similar to the case 
of elliptic rotations. Again, for $C<0$ and $C>1$, the solution from
Subsections
\ref{punkt:PosCurv} and \ref{punkt:NegCurv} are real
(which yields the $\jac{nc}{}$-type solutions). If $C\in [0,1]$, as for the 
spherical case, both solutions become real of either
$\jac{cd}{}$- or $\jac{dc}{}$-types. Since we necessarily have 
$r^2>1$, the $\jac{dc}{}$-type solutions apply.
\end{bem}

\begin{figure}%
\centering
\setlength{\unitlength}{0.1\textwidth}
\begin{picture}(10,4)
\put(2,0){\includegraphics[width=0.6\textwidth]{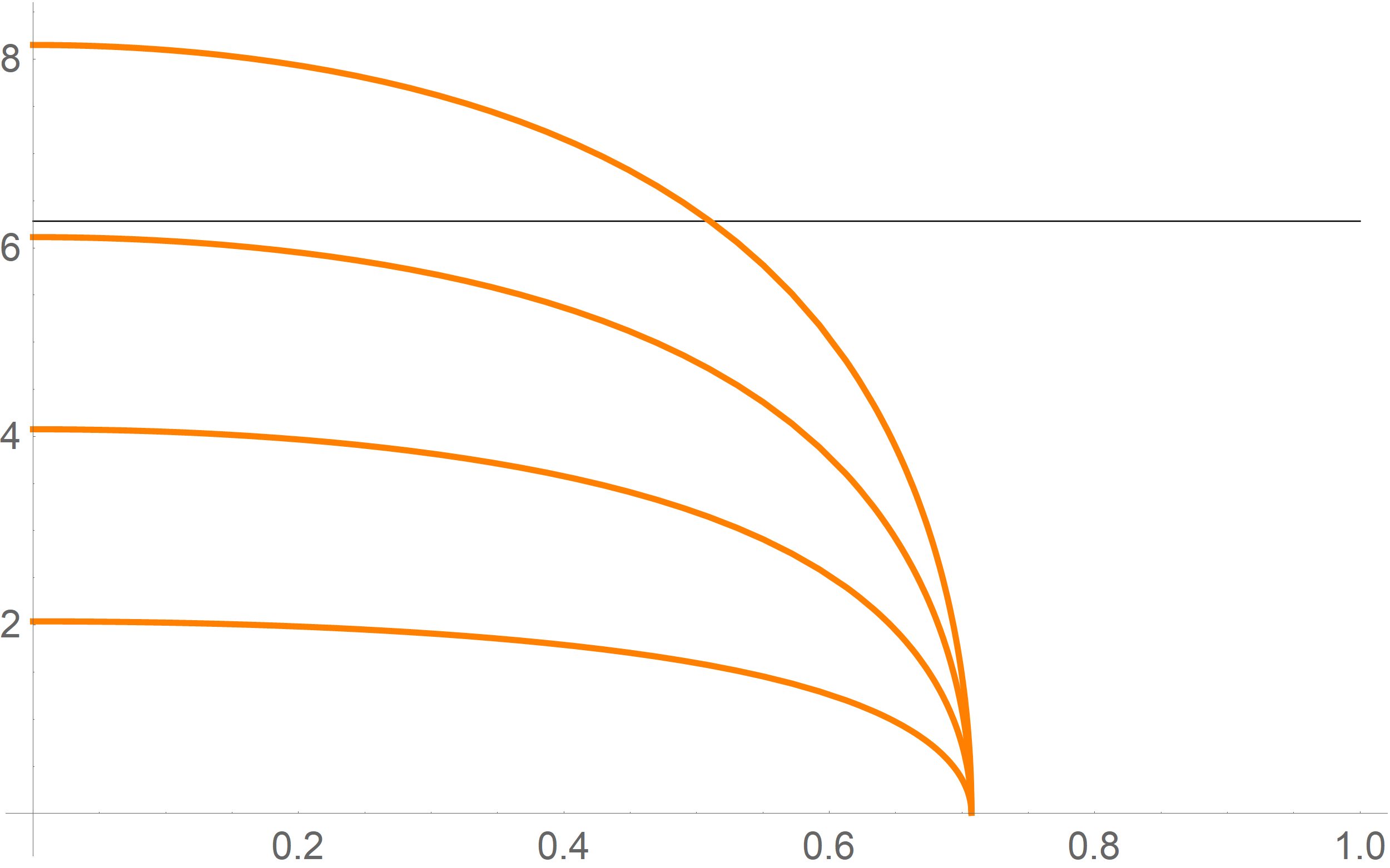}}
\put(2.2,3.35){$n=40$}
\put(7.6,2.9){$2\pi$}
\put(2.2,2.5){$n=30$}
\put(2.2,1.7){$n=20$}
\put(2.2,0.8){$n=10$}
\put(2,3.85){{\color{gray} $P(n,p)$}}
\put(8.1,0.175){{\color{gray} $p$}}
\end{picture}
\caption{The function $P$ in \eqref{eq:Period} is continuous in $p$ 
 and, for suitably large $n\in \N$, greater than $2\pi$ at $0$ with a 
 zero at $p=\sqrt{\tfrac{1}{1-K}}$ (here $K=-1$).
 Thus, by the intermediate value theorem, there is a unique solution
 $p$ of \eqref{eq:Period}.}
\label{fig:Period}%
\end{figure}

\begin{bsp}\label{exp:ClosedCurves}
Since we obtained explicit parametrisations of all CGC surfaces of 
hyperbolic rotation, it is easy to prove that there are periodic 
surfaces in this class, as we will prove for the case $K=-1$: the 
\emph{complete elliptic integral of third kind (with modulus $p$ 
and parameter $k$)} is defined as
\begin{align*}
\Pi^k_p :=\int_0^{\tfrac{\pi}{2}}\tfrac{1}{1-k\sin^2(u)}\tfrac{du}{\sqrt{1-p^2\sin^2(u)}}.
\end{align*}
For $k=0$ this coincides with the \emph{complete elliptic integral 
of first kind $F_p$} (see \Cref{app:JacobiFunctions}). The elliptic 
function $\jac{dn}{}$ is periodic (see \cite[Sect 16]{abramowitz1972}),
its period being $2F_p$.
Thus the functions $r$ and $\psi'$ from 
\Cref{tab:HyperbolicCaseHyperbolic} for the case $K<0$ are 
periodic with period
\begin{align*}
\pi_p := 2\tfrac{F_p}{\FAC}.
\end{align*}
Of course, $\psi$ is not periodic, but $\cos\psi$ is for any 
$X$ such that
\begin{align*}
\psi(s+X) = \psi(s) + 2\pi.
\end{align*}
Since
\begin{align*}
\Pii{k}{p}{s+n F_p} = \Pii{k}{p}{s} + n\Pi^k_p, 
\end{align*}
we have
\begin{align*}
\psi(s+n\pi_p) = \psi(s) + \tfrac{nK}{\FAC}\left(\Pi^k_p - F_p\right)~
\textrm{where}~k=\tfrac{1}{1-K},
\end{align*}
for any integer $n$. Therefore, the profile curve $\c(t)= \f(t,0)$ of 
a CGC $K<0$ surface is periodic if there exists 
$p \in \left[0,\sqrt{\tfrac{1}{1-K}}\,\right]$ such that 
\begin{align}\label{eq:Period}
P(n,p):=\tfrac{nK}{\FAC}\left(\Pi^k_p - F_p\right) = 2\pi. 
\end{align}
Note that $P$ vanishes at $p=\sqrt{\tfrac{1}{1-K}}$ for all $n$ 
($\FAC$ has a pole there) and, since $K<0$ and $\Pi^k_p<F_p$, is 
positive at $p=0$. Thus, for a suitably large $n \in \N$, 
\eqref{eq:Period} has a solution $p$ (which is also unique, see 
\Cref{fig:Period}), so that the corresponding profile curve
$\c$ is periodic.
Such 
a closed profile curve is displayed in \Cref{fig:Curves_H3_hyp_Neg},
and \Cref{fig:Hyp_Neg} shows the corresponding 
surface of hyperbolic rotation. 
\end{bsp}

\begin{figure}[b]%
\centering
\subfigure[][$K=-1$]{%
\label{fig:Curves_H3_hyp_Neg}%
\includegraphics[width=\columnwidth/5]{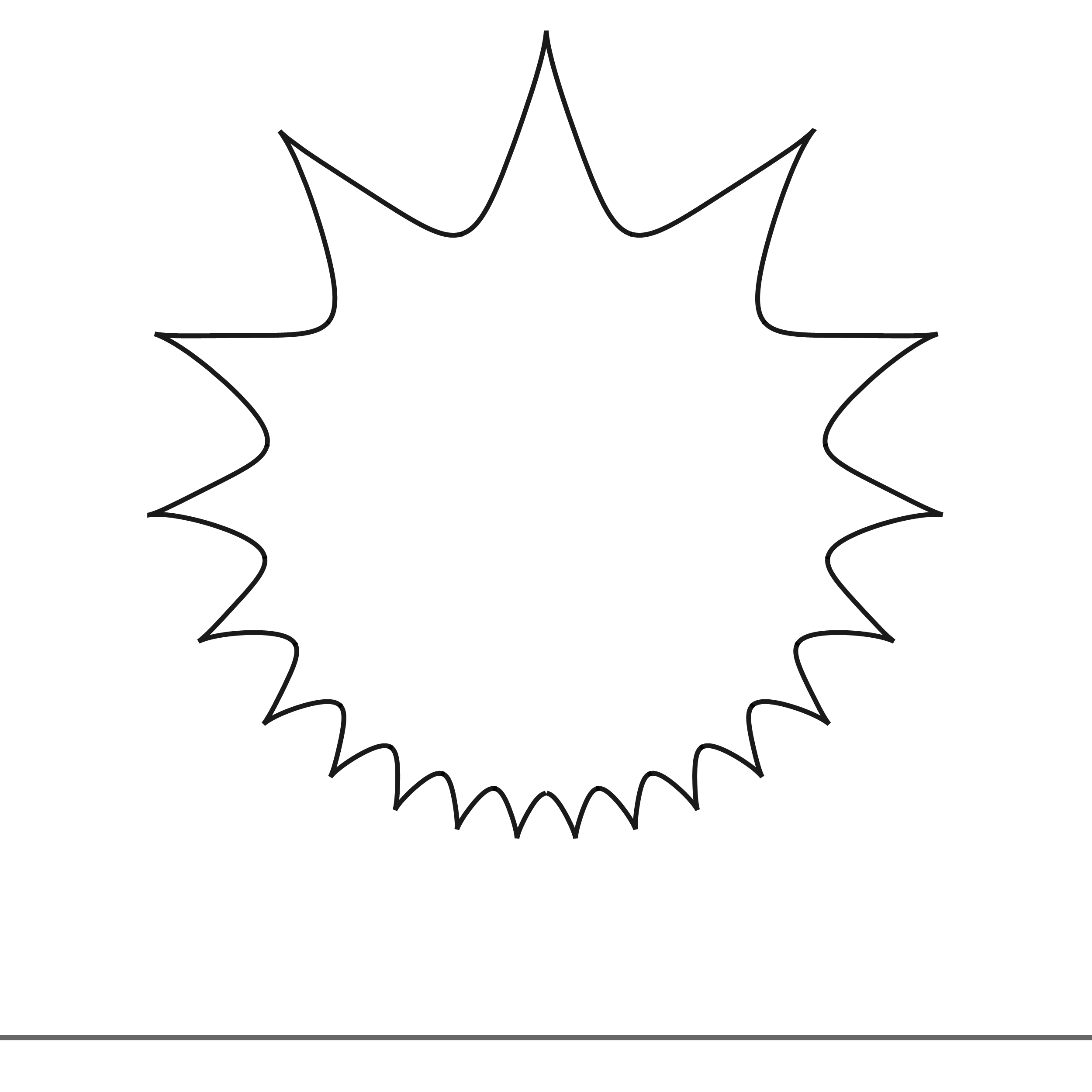}%
}%
\hfill 
\subfigure[][$K=0.4$]{%
\includegraphics[width=\columnwidth/5]{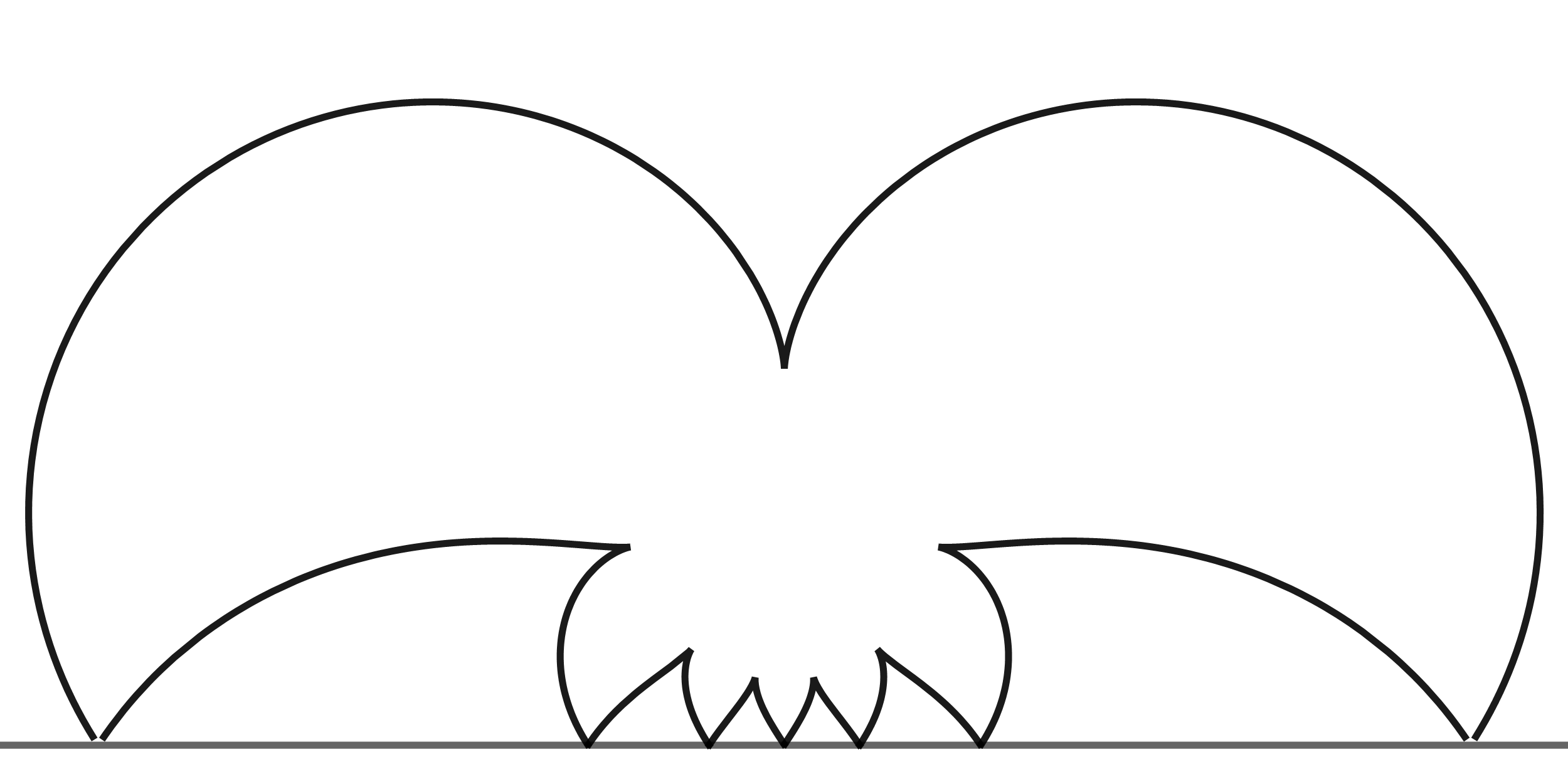}%
}%
\hfill
\subfigure[][$K=0.4$]{%
\label{fig:Curves_H3_hyp_Mix}%
\includegraphics[width=\columnwidth/5]{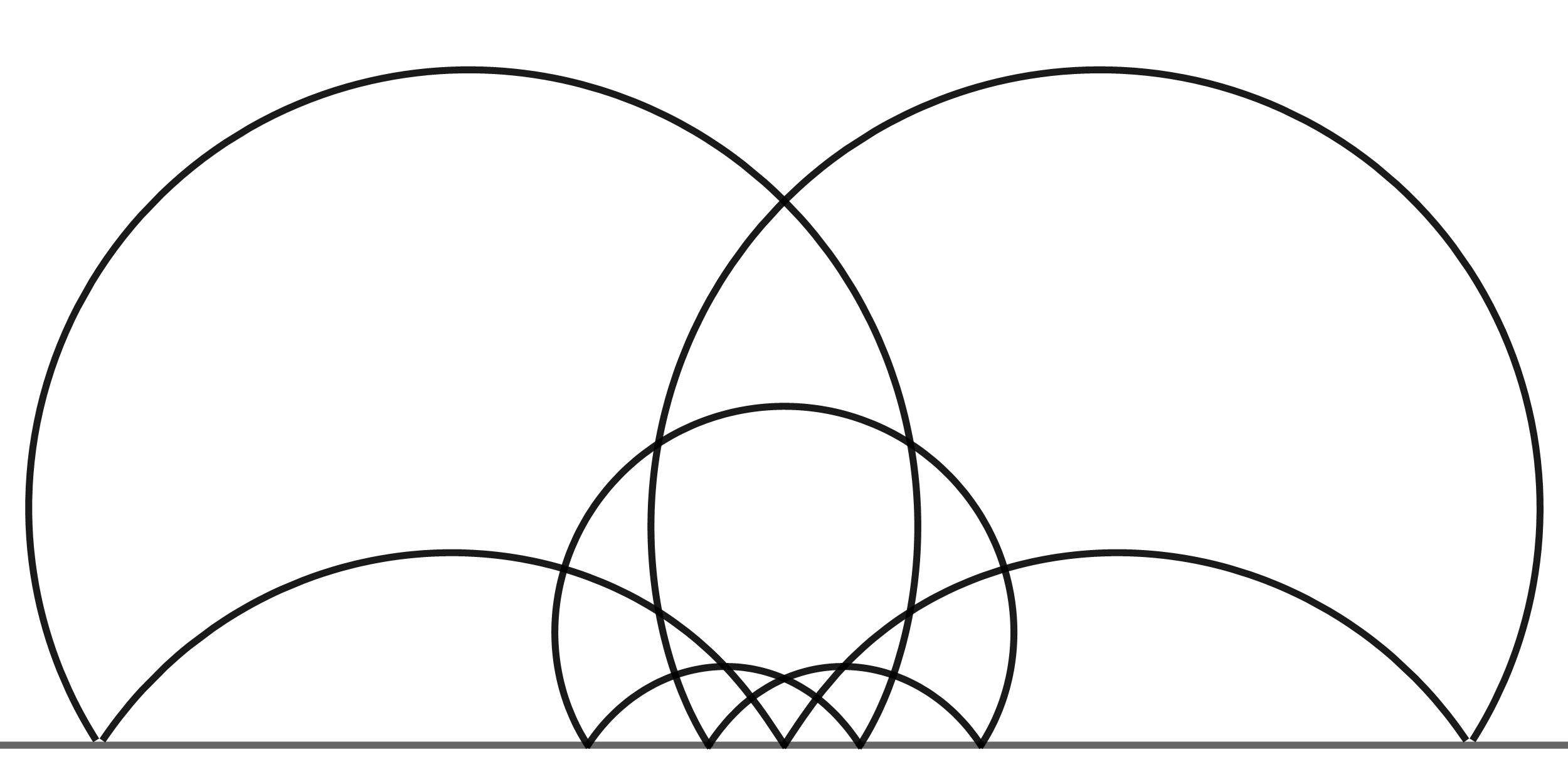}%
}%
\hfill
\subfigure[][$K=2$]{%
\label{fig:Curves_H3_hyp_Pos}%
\includegraphics[width=\columnwidth/5]{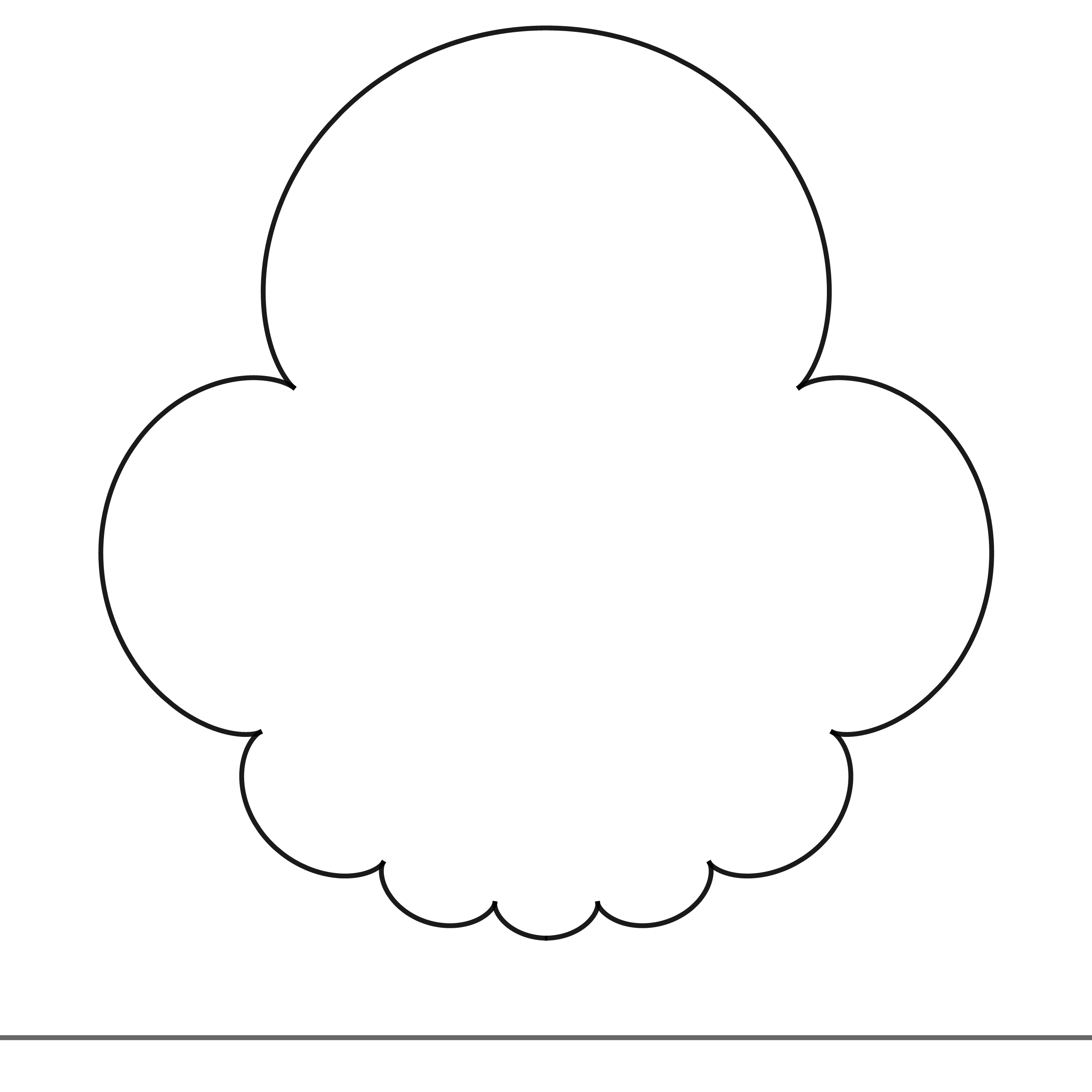}%
}%
\caption{Profile curves of CGC
 surfaces of hyperbolic rotation in the half plane model:
 for an appropriate choice of parameter, the solutions given
 in \Cref{tab:HyperbolicCaseHyperbolic} yield closed
 profile curves, see Example \ref{exp:ClosedCurves}. In
 the case $K=0.4$ the profile curves are obtained by the
 second and fourth solutions in the table. The surfaces
 obtained by hyperbolic rotation of the profile curves in Figures
 \ref{fig:Curves_H3_hyp_Neg}, \ref{fig:Curves_H3_hyp_Mix} and
 \ref{fig:Curves_H3_hyp_Pos}
 are shown in Figures \ref{fig:Hyp_Neg}, \ref{fig:Hyp_Mix},
 and \ref{fig:Hyp_Pos}, respectively.}%
\label{fig:Curves_H3_hyp}%
\end{figure}

\subsection{CGC surfaces of parabolic rotation in \texorpdfstring{$\H^3$}{H3}} 
For surfaces of parabolic rotation in $\H^3$, we equip 
${\{v\in \R^{4,2}| (v,\q)=-1,~(v,\p)=0\}\cong \R^{3,1}}$ with the 
pseudo-orthonormal basis $(\v_1, \v_2, \e_1, \e_2)$ as in the 
beginning of Subsection \ref{sec:Parb}. In that subsection, we gave formulas 
for the coordinate functions of parabolic rotational surfaces of 
constant Gauss curvature in hyperbolic spaces of arbitrary (negative) 
curvature. For $\kappa=-1$, the solutions given in 
\eqref{eq:ParabSolR} are real if
\begin{itemize}
\item $K-1\geq 0$ with $C\geq 0$,
\item $K<0$ with $C\leq 0$, or
\item $K\in (0,1)$ without restrictions on $C$.
\end{itemize}
In the last case, the solutions given in \eqref{eq:ParabSolR} are
real functions, though a Jacobi transformation has to be applied
for this fact to manifest itself.
The respective (real) solutions are listed
in \Cref{tab:HyperbolicCaseParabolic} and complement
the following classification theorem:

\begin{thm}\label{thm:Parabolic}
Every constant Gauss curvature $K\neq 0$ surface of parabolic 
rotation in $\H^3\subset \R^{3,1}$ is given, in terms of a pseudo-orthonormal 
basis, by 
\begin{align*}
(s, \theta) \mapsto 
\left(\tfrac{r^2(s)\left(\theta^2+\psi^2(s)\right)+1}{2r(s)}, 
r(s) ,~r(s)\theta,~r(s)\psi(s)\right),
\end{align*}
where $r$, $\psi$ are as listed in \Cref{tab:HyperbolicCaseParabolic}.
\end{thm}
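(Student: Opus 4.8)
The plan is to reduce the statement to the same kind of computation and case analysis already used for the previous theorems of this section. Concretely, I would first specialise the general parametrisation \eqref{eq:fParamP} of a parabolic rotational surface to the hyperbolic space form with $\kappa=-1$ and spacelike $\q$, rewrite it in the classical hyperboloid model $\H^3\subset\R^{3,1}$, and then read off $r$ and $\psi$ from the solutions of \eqref{eq:JEEP}--\eqref{eq:JEEpsiP} obtained in Subsection \ref{sec:Parb}.

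For the first step, using the action $\ro{\theta}{i}\colon(\v_2,\e_i,\v_1)\mapsto(\v_2+\theta\e_i+\tfrac{\theta^2}{2}\v_1,\e_i+\theta\v_1,\v_1)$ of the parabolic rotations recorded in Subsection \ref{sec:Parb}, a short computation gives
\begin{align*}
  \ro{\psi}{2}\ro{\theta}{1}\v_2=\tfrac{1}{2}\left(\theta^2+\psi^2\right)\v_1+\v_2+\theta\,\e_1+\psi\,\e_2.
\end{align*}
Substituting this and $d=\tfrac{1}{2r}$ (the $d$-equation of the proposition in Subsection \ref{sec:Parb}, evaluated at $\kappa=-1$) into \eqref{eq:fParamP}, and using $\tfrac{1}{\kappa}\q=-\q$, one obtains $\f=-\q+w$ where, in the pseudo-orthonormal basis $(\v_1,\v_2,\e_1,\e_2)$, the $\R^{3,1}$-part $w$ has coordinates $\left(\tfrac{r^2(\theta^2+\psi^2)+1}{2r},\,r,\,r\theta,\,r\psi\right)$. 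Since the identification $\Q\ni\x\mapsto\x+\q$ maps $\Q$ onto $\{w\in\R^{3,1}\mid(w,w)=-1\}=\H^3$, and these coordinates satisfy $-2\cdot\tfrac{r^2(\theta^2+\psi^2)+1}{2r}\cdot r+(r\theta)^2+(r\psi)^2=-1$, the vector $w$ indeed lies in $\H^3$ and is exactly the map in the statement. Conversely, \eqref{eq:fParamP} is the general form of a parabolic rotational surface, and the proposition in Subsection \ref{sec:Parb} shows that a constant Gauss curvature $K\neq0$ such surface must, after a suitable reparametrisation, have $r,\psi$ solving \eqref{eq:JEEP}--\eqref{eq:JEEpsiP}; so, up to a shift of $s$ and an ambient isometry, letting the constant $C$ range over its admissible values exhausts the class.

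It then remains to record, for each admissible $C$, the solution of $r'^2=(C-Kr^2)\big((K-1)r^2-C\big)$ and $\psi'=K-\tfrac{C}{r^2}$, that is, of \eqref{eq:JEEP}--\eqref{eq:JEEpsiP} at $\kappa=-1$. These were integrated in \eqref{eq:ParabSolR}--\eqref{eq:ParabSolPsi}: two $\jac{cn}{}$-type functions $r_1,r_2$ with moduli $p^2=K$ and $p^2=1-K$ respectively, together with the corresponding $\psi$ expressed through incomplete elliptic integrals of the third kind obtained by integrating $\psi'$ as in Subsection \ref{punkt:PosCurv}. Reality then splits into the three cases $K\ge1$ (with $C\ge0$), $K<0$ (with $C\le0$), and the mixed range $K\in(0,1)$ (with $C$ unrestricted); in each case one of $r_1,r_2$ is real, and one applies a Jacobi transformation whenever the modulus falls outside $[0,1]$ together with the accompanying transformation of $\Pi$ to $\psi$. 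Rewriting everything in terms of the modulus $p\in[0,1]$ instead of $C$ yields the entries of \Cref{tab:HyperbolicCaseParabolic}.

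The main obstacle will be the bookkeeping in the mixed range $K\in(0,1)$: there the naive solutions carry imaginary-looking arguments and moduli, several subranges of $C$ occur, and each requires the correct Jacobi and elliptic-integral transformation to produce a genuinely real parametrisation and to pin down the realised modulus interval. This is, however, precisely the case analysis already performed for \Cref{thm:Sphere}, \Cref{thm:Elliptic} and \Cref{thm:Hyperbolic}, and it can be carried over here with only cosmetic changes.
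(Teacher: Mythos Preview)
Your proposal is correct and follows essentially the same route as the paper: specialise \eqref{eq:fParamP} to $\kappa=-1$, read off the explicit $\R^{3,1}$-parametrisation in the pseudo-orthonormal basis, and then feed in the solutions \eqref{eq:ParabSolR}--\eqref{eq:ParabSolPsi} with the three-way case split on the sign of $K$ and $K-1$. One small point worth flagging: in the mixed range $K\in(0,1)$ the parameter $k$ in $\Pi$ from \eqref{eq:ParabSolPsi} hits the special value $k=1$ (or $k=p^2$), so the Jacobi transformation of $\Pi$ with imaginary argument produces an incomplete elliptic integral of the \emph{second} kind $E_p$ rather than another $\Pi$; this is why the $K\in(0,1)$ entries of \Cref{tab:HyperbolicCaseParabolic} look structurally different from the analogous entries in the earlier tables, and is not quite a cosmetic change from the other theorems.
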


\begin{bem}
In the third case, $K\in (0,1)$, the solution $\psi$ of \eqref{eq:ParabSolPsi}
comes with an imaginary argument, hence a Jacobi transformation needs to be
applied, see \Cref{app:JacobiFunctions}. The effect is that
$\psi$ then contains an \emph{elliptic integral of the second 
kind with modulus $p$}, denoted by $E_p$ and defined as
\begin{align*}
E_p(s) = \int_0^s \sqrt{1-p^2 \sin^2(u)} du.
\end{align*}
\end{bem}

\begin{figure}[t]%
\centering
\subfigure[][$K=-1$]{%
\label{fig:Curves_H3_par_Neg}%
\includegraphics[width=\columnwidth/5]{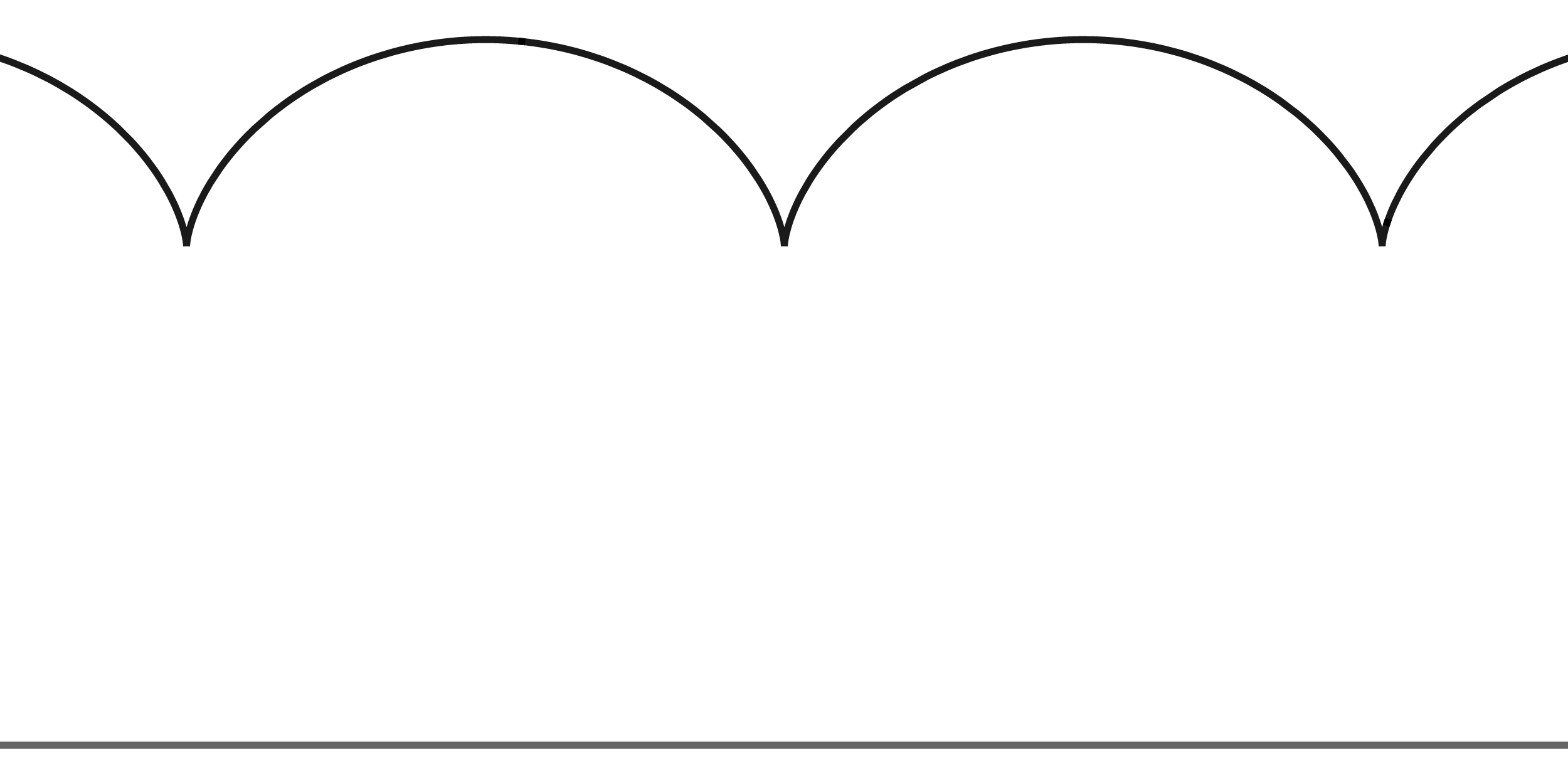}%
}%
\hfill 
\subfigure[][$K=0.4$, first $\jac{nc}{}$-solution]{%
\label{fig:Curves_H3_par_Mix}%
\includegraphics[width=\columnwidth/5]{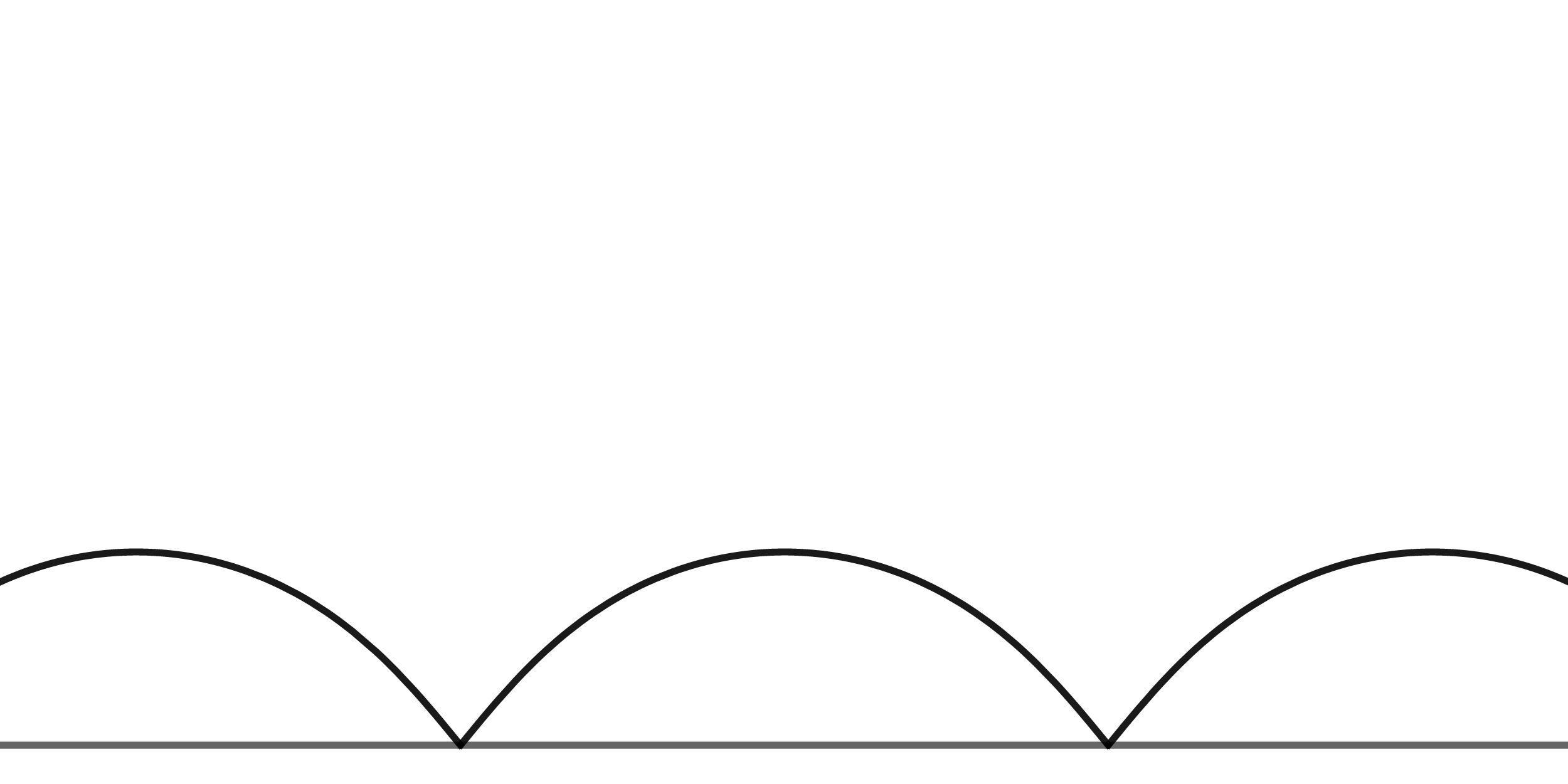}%
}%
\hfill
\subfigure[][$K=0.4$, second $\jac{nc}{}$-solution]{%
\includegraphics[width=\columnwidth/5]{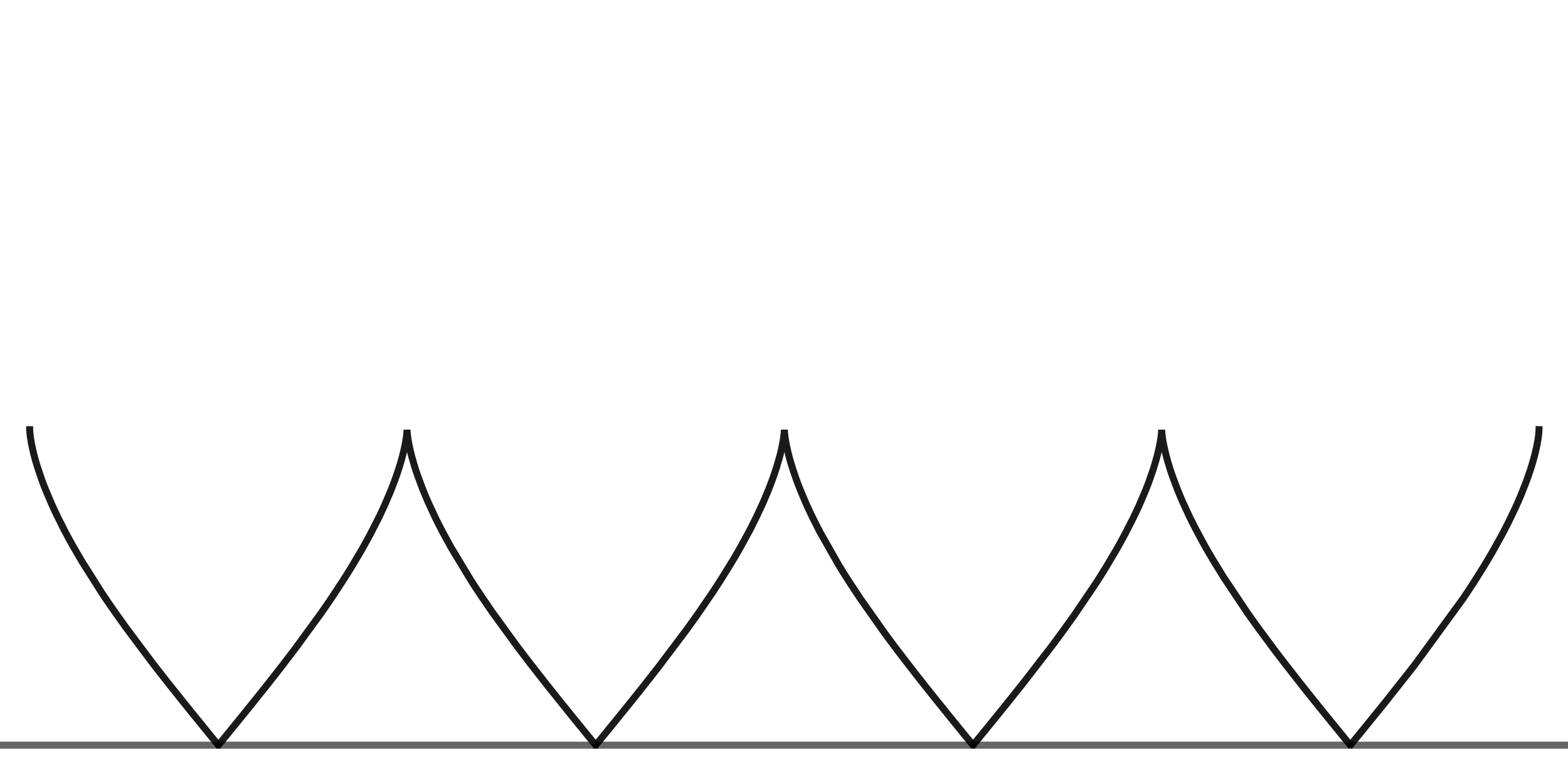}%
}%
\hfill
\subfigure[][$K=2$]{%
\label{fig:Curves_H3_par_Pos}%
\includegraphics[width=\columnwidth/5]{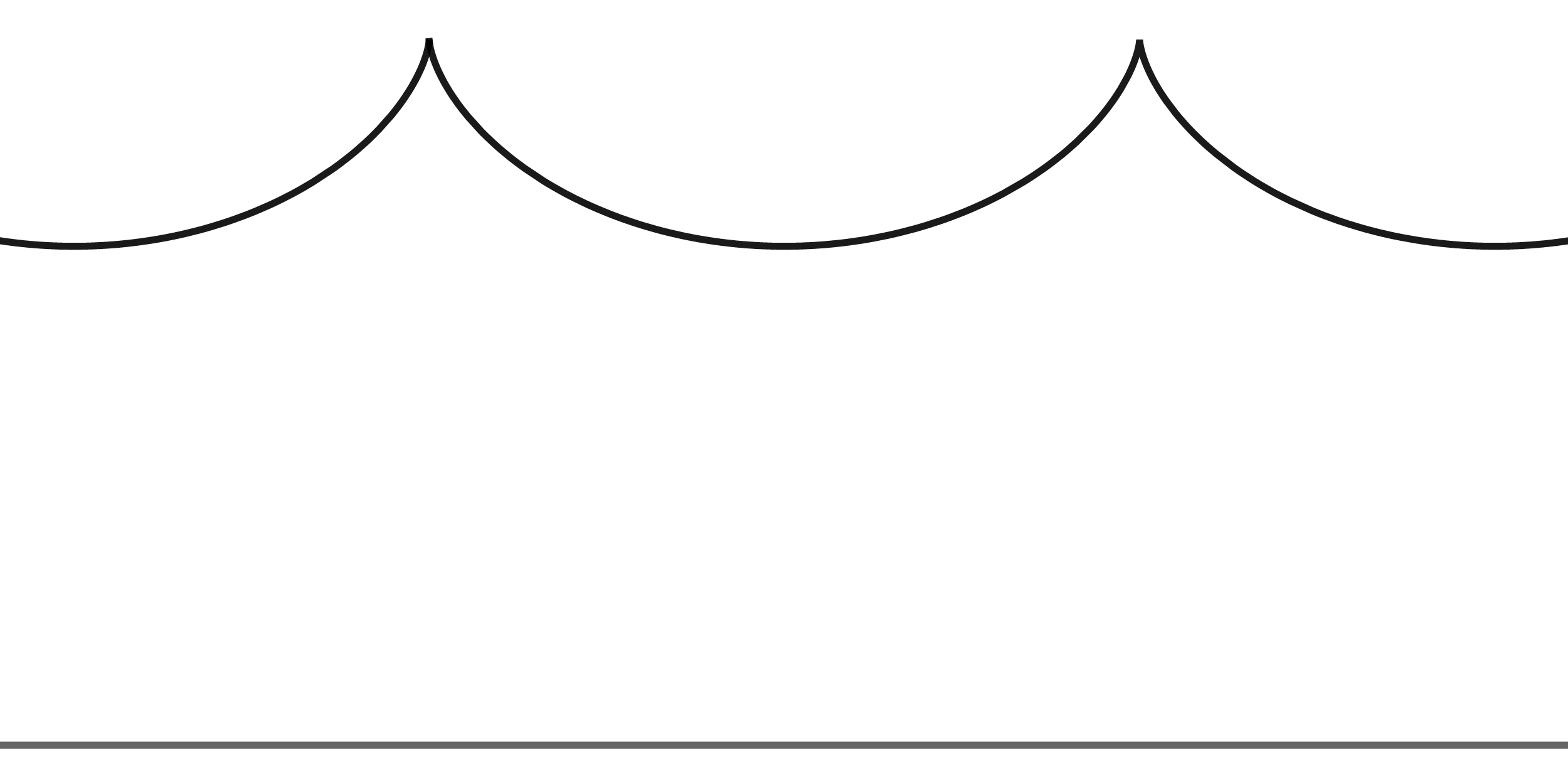}%
}%
\caption{Profile curves of CGC
 surfaces of parabolic rotation in the half plane model:
 the profile curves are obtained using the solutions
 in \Cref{tab:HyperbolicCaseParabolic}. In the Poincar\'e
 half space model, the parabolic rotation appears as a
 translation along the direction perpendicular to the half
 plane.
 In Figures \ref{fig:Par_Neg}, \ref{fig:Par_Mix}, and \ref{fig:Par_Pos},
 the surfaces obtained from the profile curves
 in Figures \ref{fig:Curves_H3_par_Neg}, \ref{fig:Curves_H3_par_Mix},
 and \ref{fig:Curves_H3_par_Pos}, respectively,
 are shown in the Poincar\'{e} ball model.}%
\label{fig:Curves_H3_par}%
\end{figure}

\begin{table}[ht]
\centering
\begin{tabular}{l|ll}
$K<0$  
&$r(s)=\sqrt{\frac{C}{K}}~\jac{dn}{p}\left(\FAC~s\right)$ 
&$\psi(s)=K~s-\tfrac{K}{\FAC}~\Pii{p^2}{p}{\FAC~s}$ \cr
&\quad $\FAC=\sqrt{(K-1)C}$, with $C\leq 0$, &$p=\sqrt{\tfrac{1}{1-K}}$ \cr
\hline
$K\in (0,1)$ 
&$r(s)=\sqrt{\tfrac{C}{K-1}}\jac{nc}{p}(\FAC~s)$
&$\psi(s)=\tfrac{1}{\FAC}~\left(E_p\circ\jac{am}{p}\right)(\FAC~s)$ \cr 
&\quad $\FAC=\sqrt{-C}$ with $C<0$, &$p=\sqrt{1-K}$ \cr
&$r(s)=\sqrt{\tfrac{C}{K}}\jac{nc}{p}(\FAC~s)$
&$\psi(s)=s - \tfrac{1}{\FAC}~\left(E_p\circ\jac{am}{p}\right)(\FAC~s)$, \cr 
&\quad $\FAC=\sqrt{C}$, with $C>0$, &$p=\sqrt{K}$  \cr
\hline
$K=1$ 
&$r(s)=p~\cosh~\left(ps\right)$ &$\psi(s)=s+\tfrac{\tanh(ps)}{p}$ \cr
&\quad $p\in (0, \infty)$ & \cr
\hline
$K-1>0$ 
&$r(s)=\sqrt{\frac{C}{K-1}}\jac{dn}{p}\left(\FAC~s\right)$
&$\psi(s)=K~s-\tfrac{K-1}{\FAC}~\Pii{p^2}{p}{\FAC~s}$\cr 
&\quad $\FAC=\sqrt{KC}$, with $C\geq 0$, &$p=\sqrt{\tfrac{1}{K}}$
\end{tabular}
\caption{Parameter functions of CGC surfaces of parabolic rotation in \texorpdfstring{$\H^3$}{H3}.}
\label{tab:HyperbolicCaseParabolic}
\end{table}

\begin{figure}%
	\centering
	\subfigure[][Elliptic rotation, ${K=-1}$, corresponds to the profile curve given in Figure \ref{fig:Curves_H3_ell_NegCN}.]{%
		\label{fig:Ell_Neg}
		\includegraphics[width=\columnwidth/5]{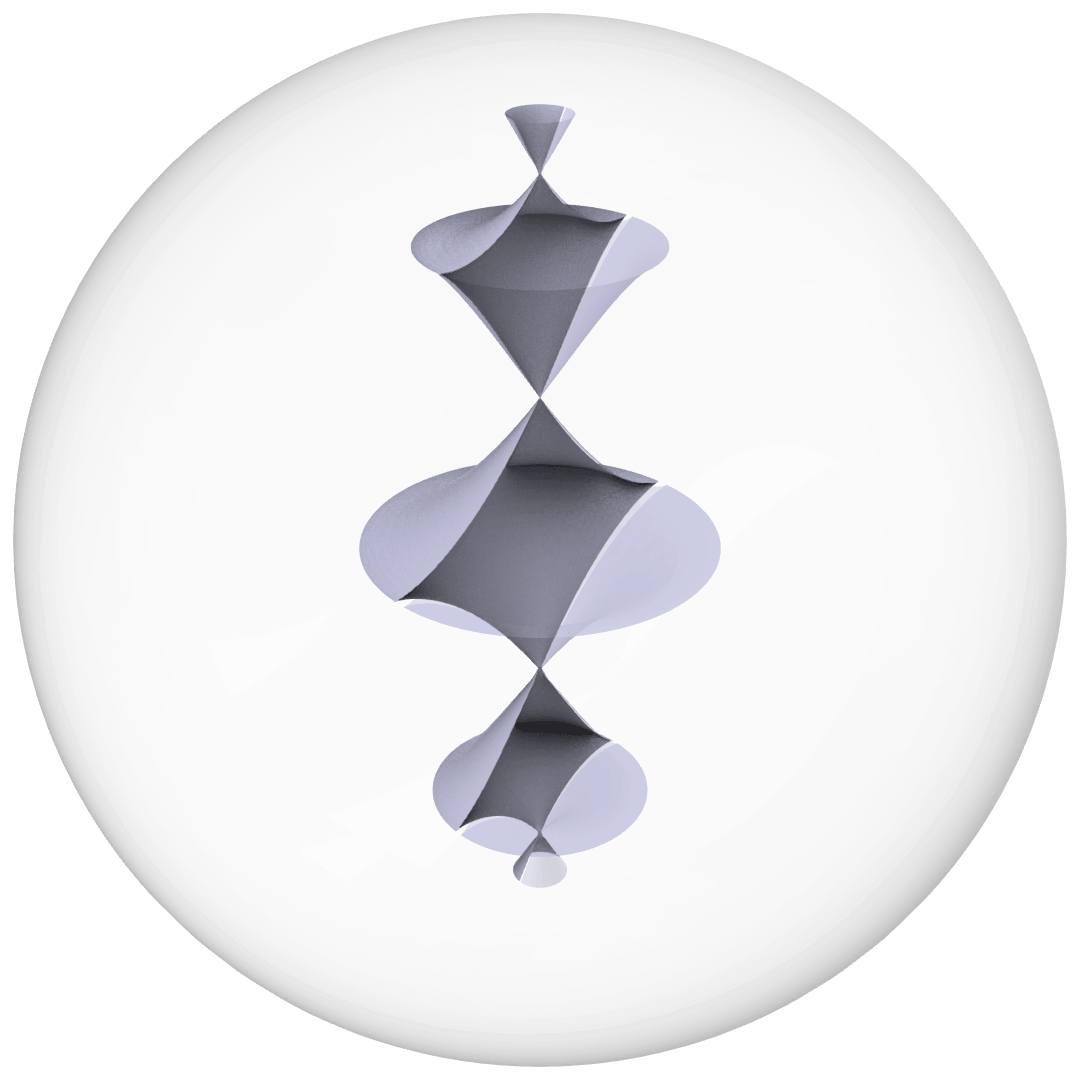}%
	}%
	\hfill 
		\subfigure[][Elliptic rotation, ${K=0.4}$, corresponds to the profile curve given in Figure \ref{fig:Curves_H3_ell_MixSC}.]{%
				\label{fig:Ell_Mix}
		\includegraphics[width=\columnwidth/5]{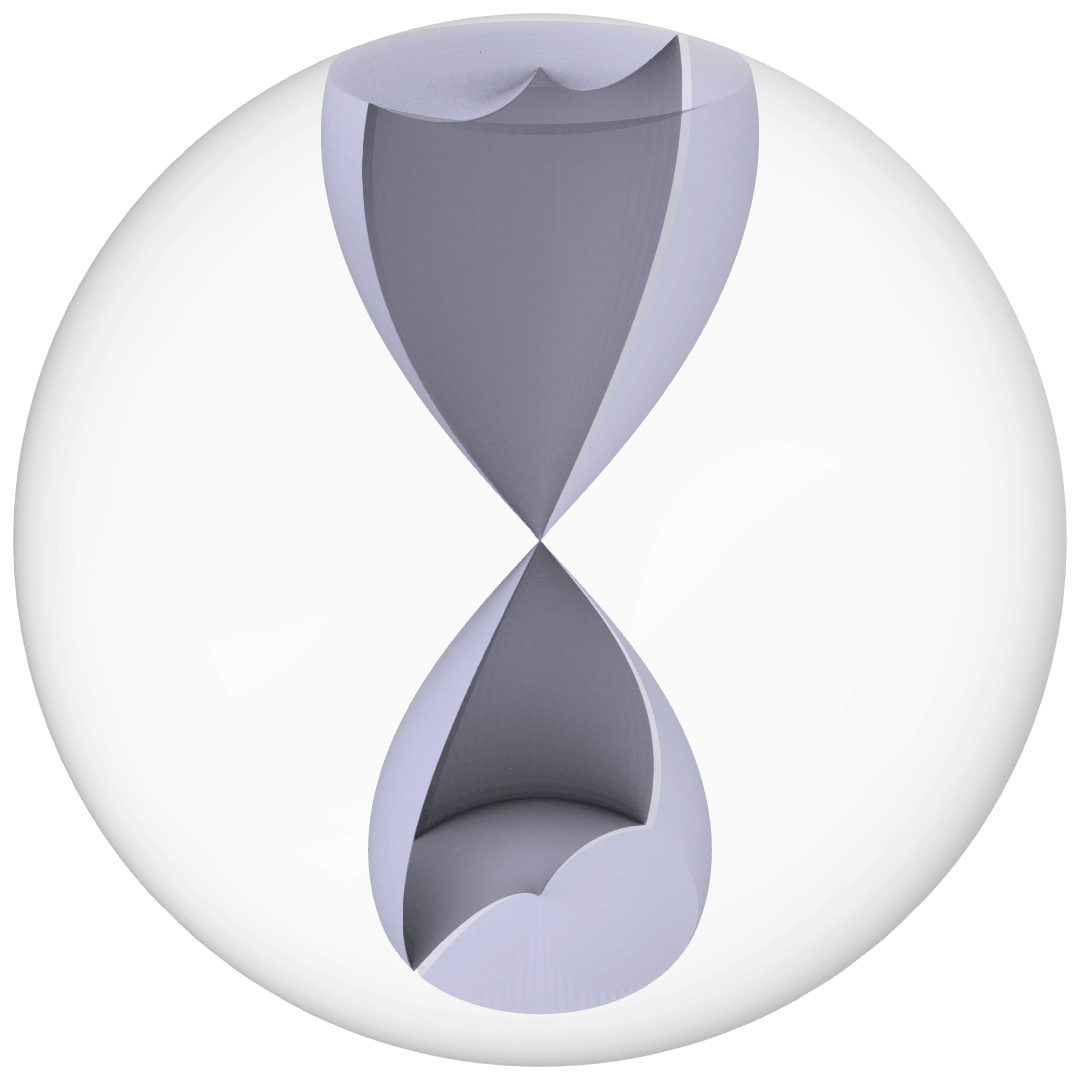}%
	}%
	\hfill
		\subfigure[][Elliptic rotation, ${K=2}$, corresponds to the profile curve given in Figure \ref{fig:Curves_H3_ell_PosCN}.]{%
		\label{fig:Ell_Pos}
		\includegraphics[width=\columnwidth/5]{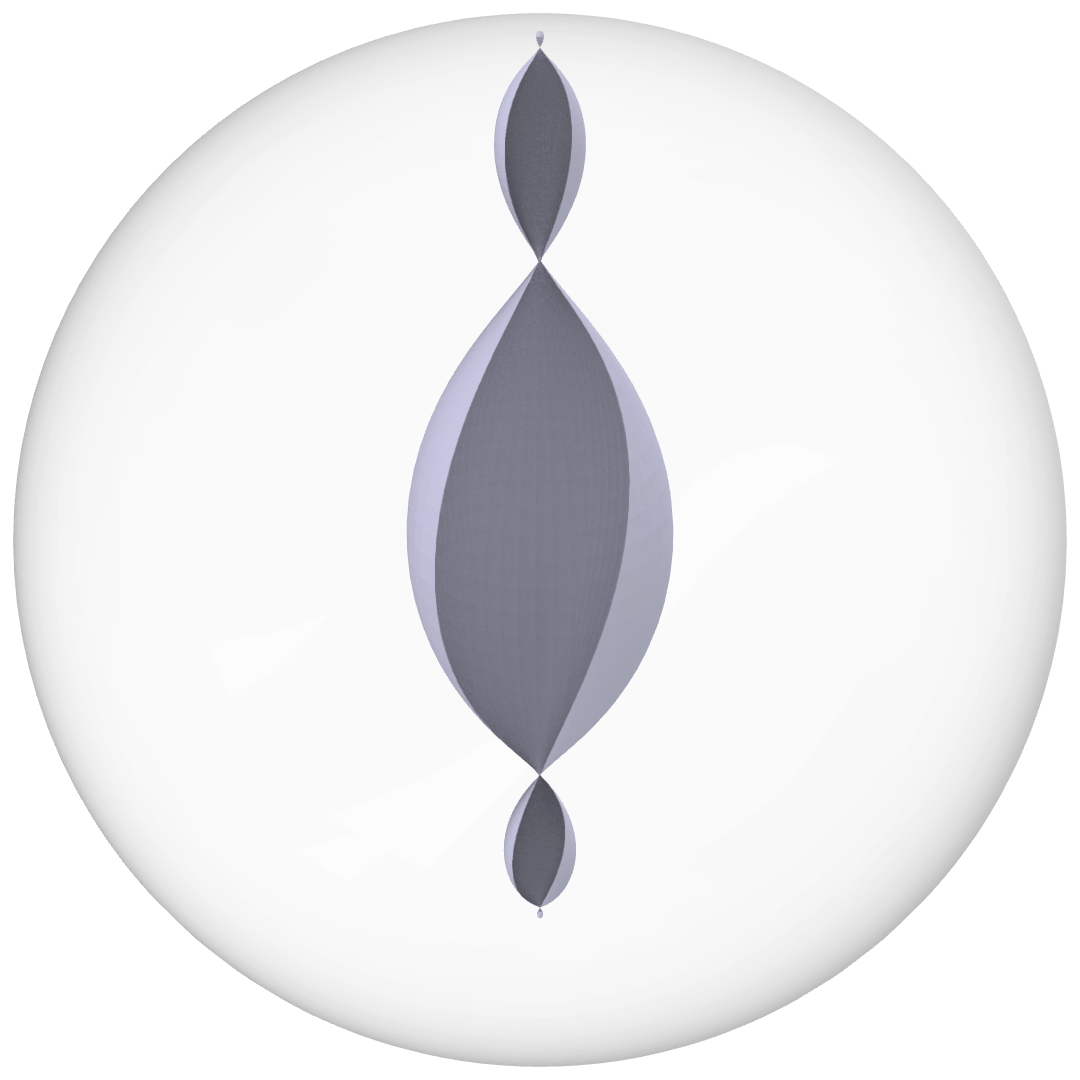}%
	}%
	\\
	\subfigure[][Hyperbolic rotation, ${K=-1}$, corresponds to the profile curve given in Figure \ref{fig:Curves_H3_hyp_Neg}.]{%
		\label{fig:Hyp_Neg}
		\includegraphics[width=\columnwidth/5]{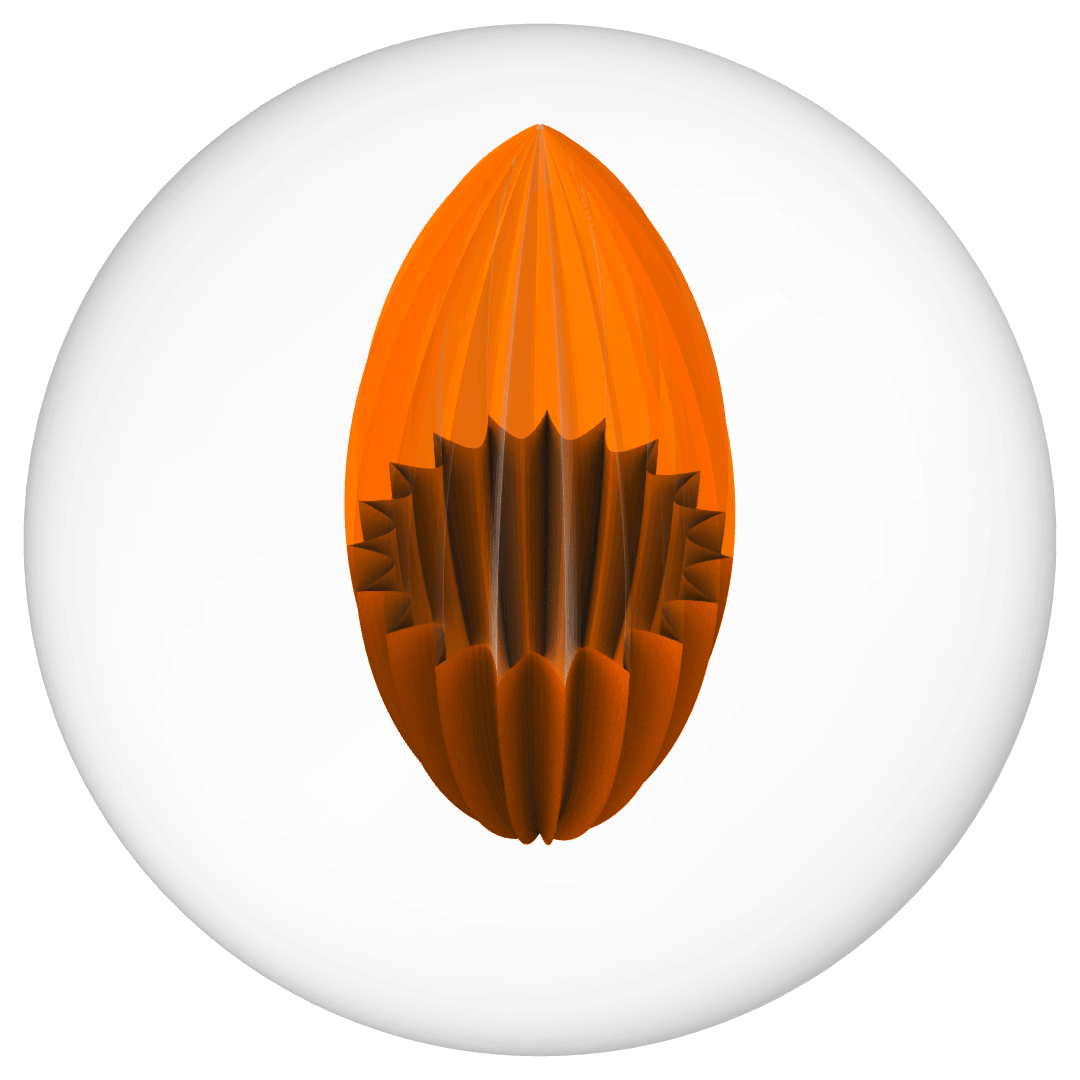}%
	}%
	\hfill
		\subfigure[][Hyperbolic rotation, ${K=0.4}$, corresponds to the profile curve given in Figure \ref{fig:Curves_H3_hyp_Mix}.]{%
		\label{fig:Hyp_Mix}%
		\includegraphics[width=\columnwidth/5]{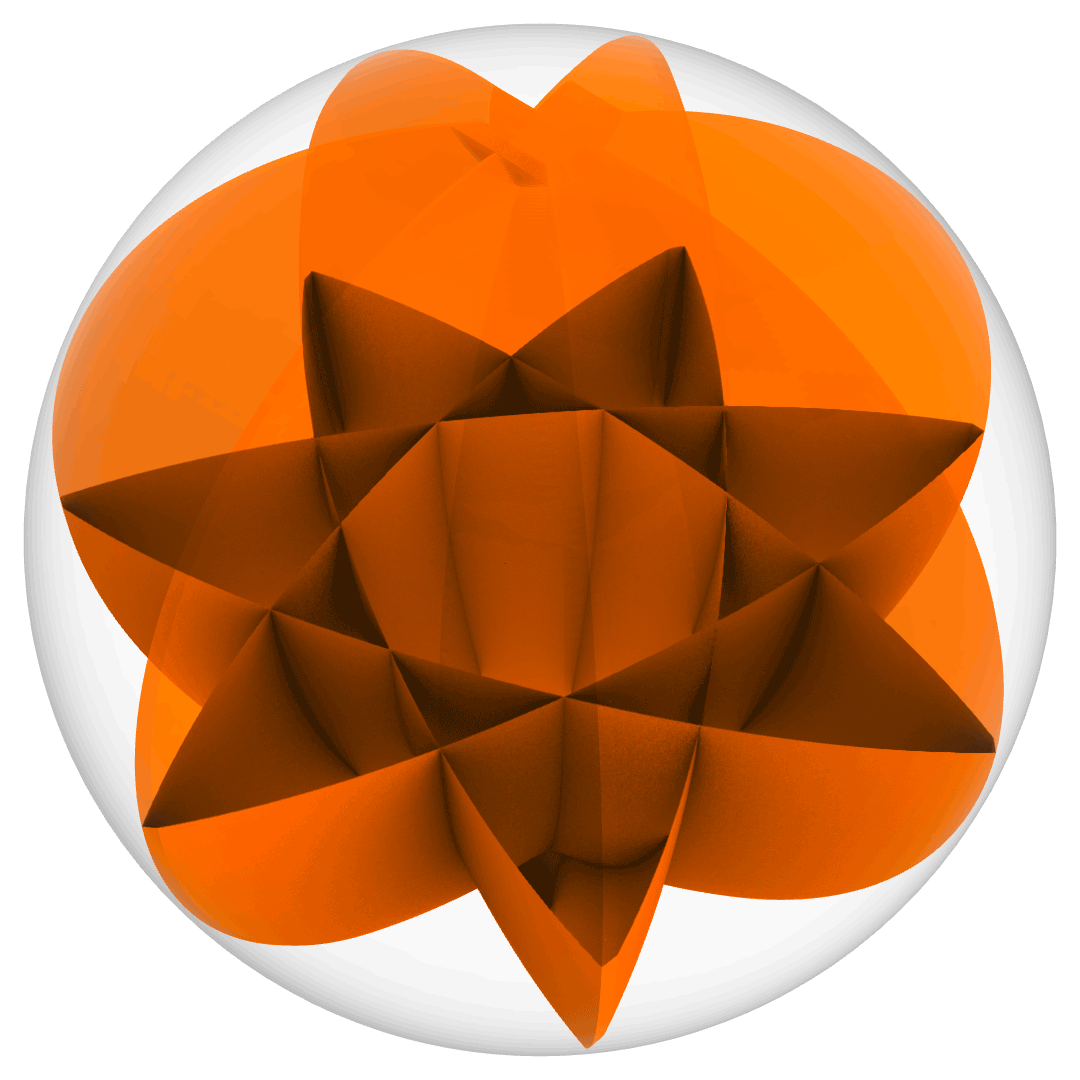}%
	}%
	\hfill
		\subfigure[][Hyperbolic rotation, ${K=2}$, corresponds to the profile curve given in Figure \ref{fig:Curves_H3_hyp_Pos}.]{%
		\label{fig:Hyp_Pos}%
		\includegraphics[width=\columnwidth/5]{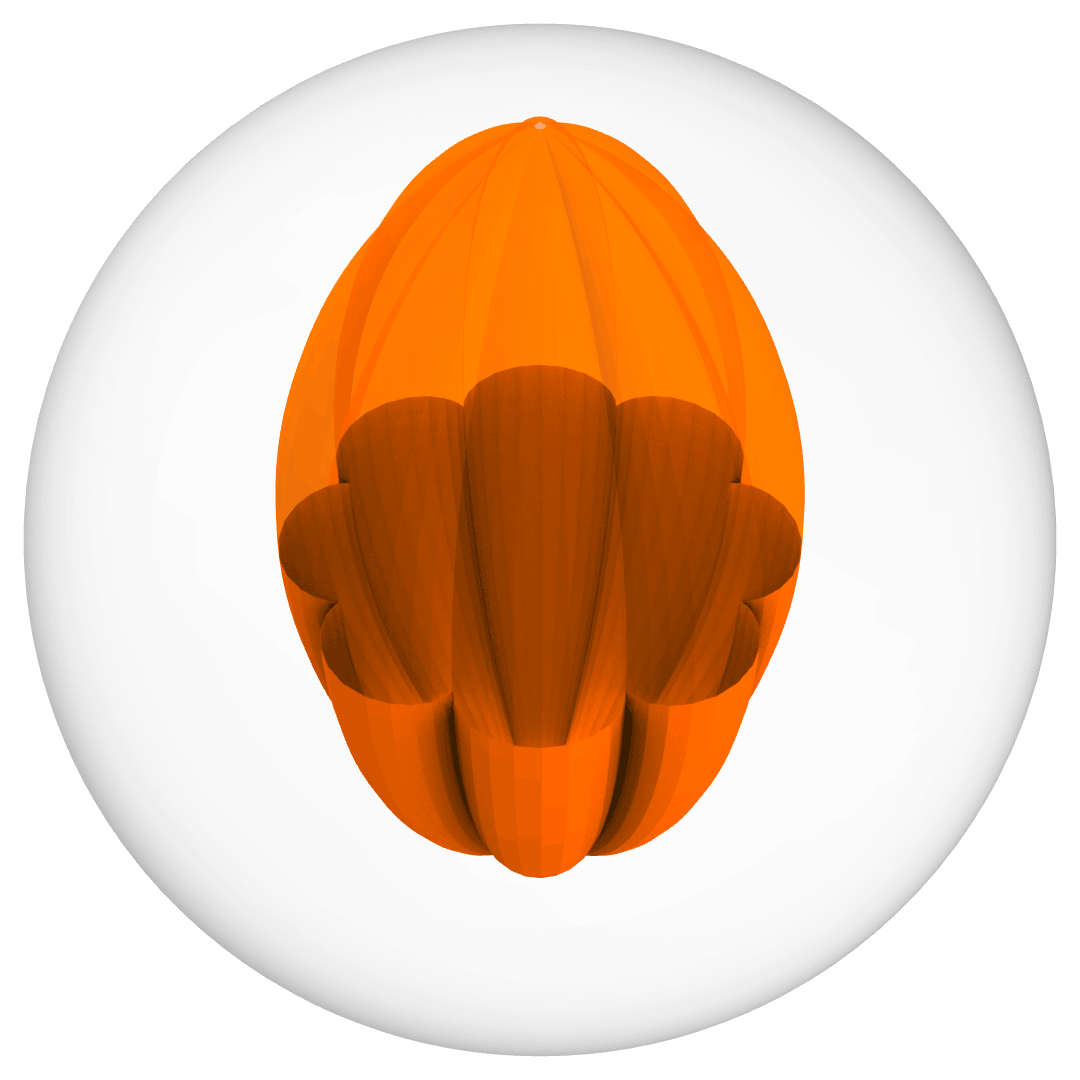}%
	}%
	\\
	\subfigure[][Parabolic rotation, ${K=-1}$, corresponds to the profile curve given in Figure \ref{fig:Curves_H3_par_Neg}.]{%
		\label{fig:Par_Neg}%
		\includegraphics[width=\columnwidth/5]{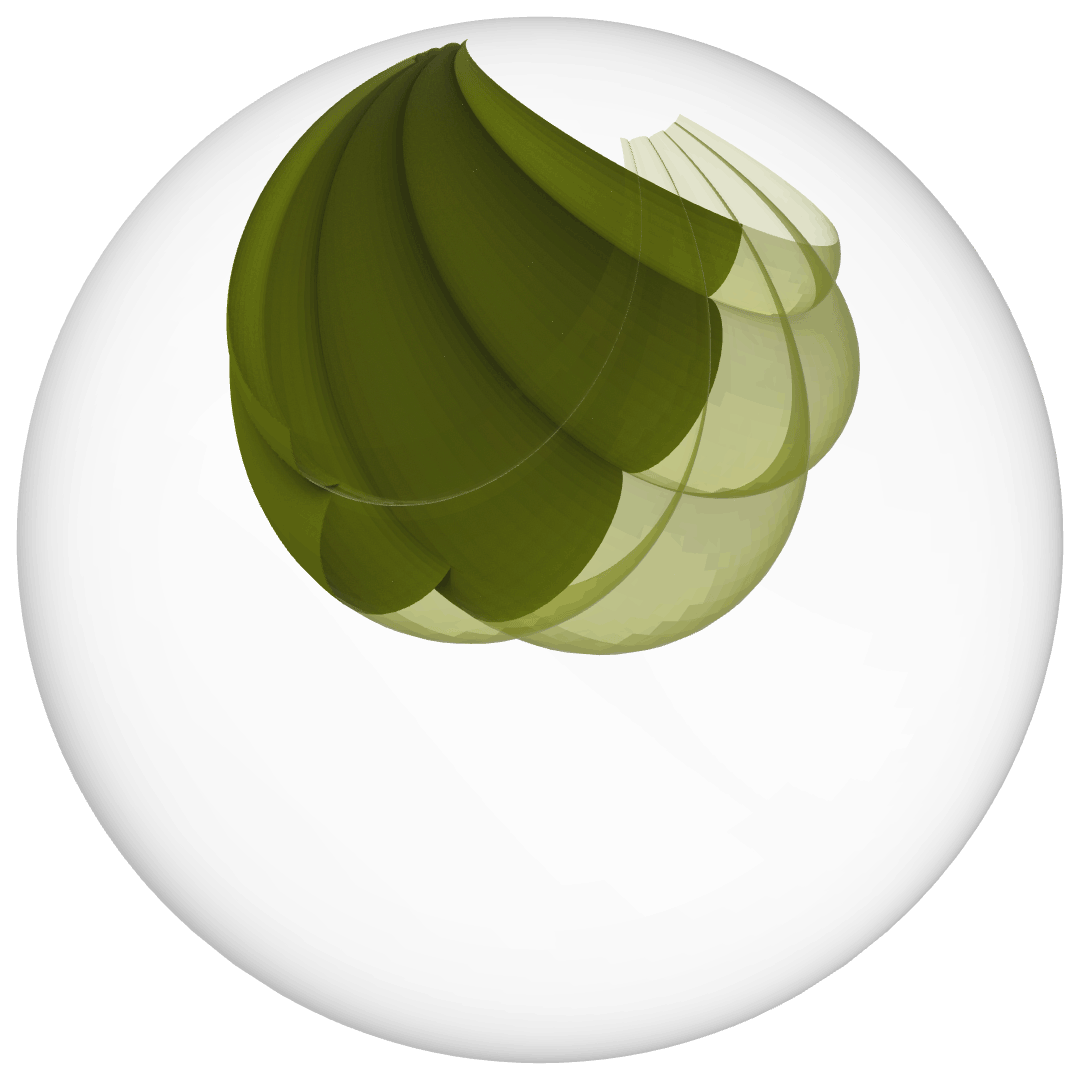}%
	}%
	\hfill
		\subfigure[][Parabolic rotation, ${K=0.4}$, corresponds to the profile curve given in Figure \ref{fig:Curves_H3_par_Mix}.]{%
		\label{fig:Par_Mix}%
		\includegraphics[width=\columnwidth/5]{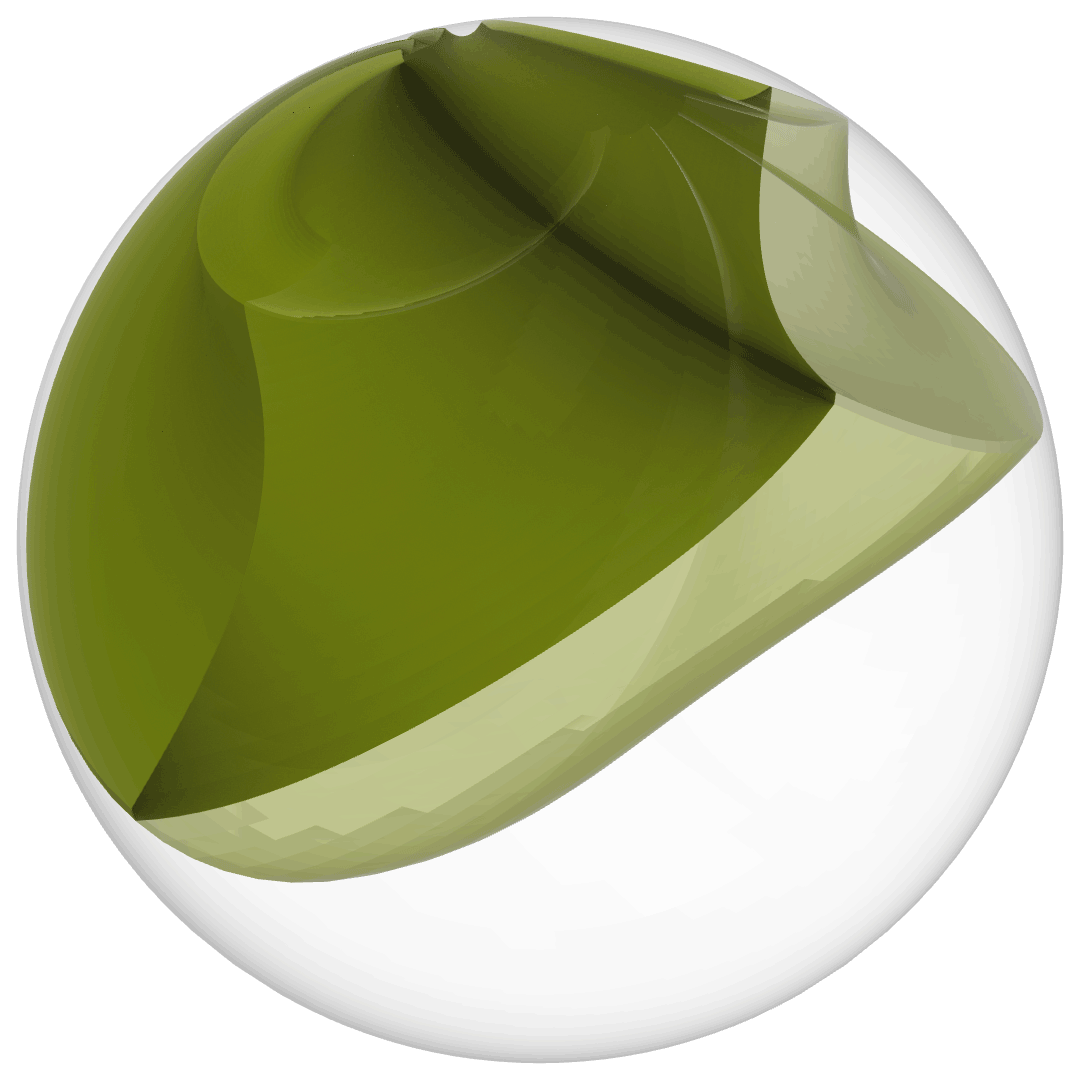}%
	}%
	\hfill
		\subfigure[][Parabolic rotation, ${K=2}$, corresponds to the profile curve given in Figure \ref{fig:Curves_H3_par_Pos}.]{%
		\label{fig:Par_Pos}%
		\includegraphics[width=\columnwidth/5]{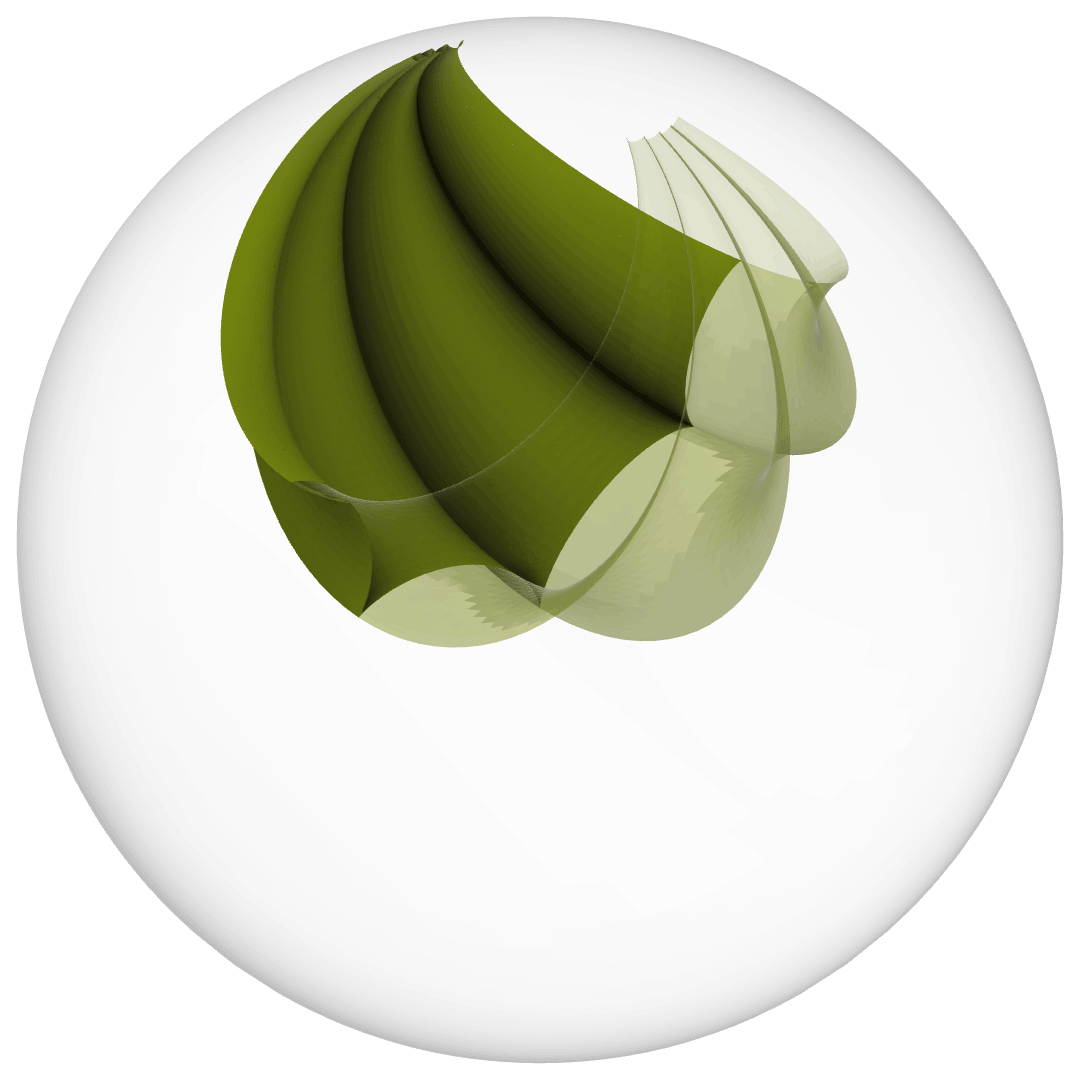}%
		}
\caption{Surfaces of different Gauss curvatures and types of
rotation in the Poincar\'e ball model.}%
	\label{}%
\end{figure}

The Theorems \ref{thm:Elliptic}, \ref{thm:Hyperbolic} and
\ref{thm:Parabolic} provide explicit parametrisations of all 
rotational CGC surfaces in $\H^3$. 
Moreover, via parallel transformations, we may obtain 
parametrisations of all rotational linear Weingarten surfaces in 
the parallel families of a rotational CGC surface. Thus, 
\Cref{thm:CLWisRot} and \Cref{prop:Bonnet} lead to the following 
theorem. 

\begin{thm}
Every channel linear Weingarten surface in hyperbolic space 
$\H^3$, satisfying
\begin{align*}
 aK + 2bH + c =0 ~\text{with}~ \left|\tfrac{a+c}{2}\right|>|b|
\end{align*}
is parallel to a 
rotational surface parametrised by one of the parametrisations 
given in Theorems \ref{thm:Elliptic}, \ref{thm:Hyperbolic}, or 
\ref{thm:Parabolic}.
\end{thm}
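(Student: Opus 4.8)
The plan is to obtain the theorem by stringing together the structural results proved above: the analytic content is already contained in \Cref{thm:CLWisRot}, \Cref{prop:Bonnet} and the classification Theorems~\ref{thm:Elliptic}, \ref{thm:Hyperbolic}, \ref{thm:Parabolic}, so what remains is to check that these fit together. First I would take a (non-tubular) channel linear Weingarten surface $\f$ in $\H^3$ satisfying $aK+2bH+c=0$ with $\left|\tfrac{a+c}{2}\right|>|b|$; here the ambient curvature is $\kappa=-(\q,\q)=-1$ and $\q$ is spacelike. By \Cref{thm:CLWisRot} (cf.\ also \Cref{cor:ClWIsIsothermic}), $\f$ is rotational: its Legendre lift $\pounds$ is invariant under a $1$-parameter subgroup of rotations $\rho_1$ in some plane $\Pi\perp\spn{\p,\q}$, and since $\Pi\perp\p$ with $\p$ timelike, $\sgn{\Pi}$ is one of $(++)$, $(+0)$, $(+-)$, i.e.\ $\rho_1$ is elliptic, parabolic or hyperbolic.

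Next I would feed the hypothesis $\left|\tfrac{a+c}{2}\right|>|b|$ into \Cref{prop:Bonnet}(3), which (for $\kappa=-1$) produces precisely one CGC surface $\tilde\f$ with $\tilde K\neq0$ in the parallel family of $\f$; say $\tilde\f\in\Gamma(P\pounds)$ for the corresponding parallel transformation $P$. The point I would spell out carefully is that $P$ preserves the rotational structure: by definition $P$ acts only on $\spn{\p,\q}$ and as the identity on $\spn{\p,\q}^\perp\supseteq\Pi$, hence $P$ commutes with $\rho_1$, so $P\pounds$ is again $\rho_1$-invariant; and since $\p$ is fixed by both $P$ and $\rho_1$, \Cref{lem:UmiqueSurface} forces the point sphere congruence $\tilde\f$ enveloped by $P\pounds$ to be $\rho_1$-invariant as well. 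Thus $\tilde\f$ is a rotational CGC surface in $\H^3$ with the same rotation plane $\Pi$, hence of the same type (elliptic, parabolic, hyperbolic) as $\f$.

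Finally, according to whether $\sgn{\Pi}$ equals $(++)$, $(+0)$, or $(+-)$ the surface $\tilde\f$ is a CGC $\tilde K\neq0$ surface of elliptic, parabolic, or hyperbolic rotation in $\H^3$, and is therefore one of the surfaces parametrised in \Cref{thm:Elliptic}, \Cref{thm:Parabolic}, or \Cref{thm:Hyperbolic} (via the functions in Tables~\ref{tab:HyperbolicCaseElliptic}, \ref{tab:HyperbolicCaseParabolic}, \ref{tab:HyperbolicCaseHyperbolic}). As $\f$ is parallel to $\tilde\f$, this is the assertion. There is no genuinely hard step: the substance lies in \Cref{prop:Bonnet} and in the three classification theorems, and the only things demanding attention are the compatibility of parallel transformation with the rotational symmetry described above, and the (immediate) remark that the trichotomy of $\sgn{\Pi}$ is exactly matched by the three tables, so that no rotational CGC surface in $\H^3$ is left out.
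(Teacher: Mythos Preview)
Your proposal is correct and follows exactly the route the paper takes: the paper does not give a separate proof but simply states that the theorem follows from \Cref{thm:CLWisRot} and \Cref{prop:Bonnet} together with the three classification theorems. Your write-up is in fact more detailed than the paper's, in that you make explicit why parallel transformations commute with the rotational symmetry (since $P$ acts trivially on $\spn{\p,\q}^\perp\supseteq\Pi$), a point the paper leaves implicit.
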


\newpage

\appendix

\section{Jacobi elliptic functions}\label{app:JacobiFunctions}
We gather some results and transformation formulas for Jacobi elliptic 
functions and elliptic integrals. For details see 
\cite[Chap 63]{spanier1987} and \cite[Chap 16 and 17]{abramowitz1972}.

\subsection{Elliptic functions}
The \emph{Jacobi elliptic functions of pole type $n$} may be given for 
a \emph{modulus} $p\in [0,1]$ by their characterising elliptic 
differential equations
\begin{align*}
 &y'^2(s)=(1-y^2(s))(1-p^2y^2(s))~ \Rightarrow ~y(s)=\jac{sn}{p}(s), \\
 &y'^2(s)=(1-y^2(s))(q^2+p^2y^2(s))~\Rightarrow~y(s)=\jac{cn}{p}(s), \\
 &y'^2(s)=(1-y^2(s))(-q^2 + y^2(s))~\Rightarrow~ y(s) = \jac{dn}{p}(s),
\end{align*}
where $q = \sqrt{1-p^2}$ is called the \emph{complementary modulus}. 
The \emph{Jacobi amplitude function} $\jac{am}{p}$ may be defined via
$\jac{am}{p}(s) = \operatorname{arcsin}\jac{sn}{p}(s) =
\operatorname{arccos}\jac{cn}{p}(s)$, then the characterising 
differential equations imply $\jac{am}{p}' = \jac{dn}{p}$. Further, 
we obtain the \emph{Pythagorean laws}
\begin{align*}
	\jac{sn}{p}^2+\jac{cn}{p}^2=\jac{dn}{p}^2+p^2\jac{sn}{p}^2=1.
\end{align*}
A wider class of Jacobi elliptic functions may be defined via algebraic
combinations of the three functions of pole type $n$:
\begin{align*}
 \jac{ef}{p}(s):=\tfrac{\jac{en}{p}(s)}{\jac{fn}{p}(s)}~%
 \textrm{and}~\jac{ne}{p}(s) = \tfrac{1}{\jac{en}{p}(s)},~%
 \textrm{for}~e,f \in \{c,d,s\}.
\end{align*}

The Jacobi elliptic functions take complex arguments: purely imaginary 
arguments are evaluated using \emph{Jacobi's imaginary transformations} 
\begin{align*}
 \jac{sn}{p}(is)=i\jac{sc}{q}(s),\quad\jac{cn}{p}(is)=\jac{nc}{q}(s),%
 \quad\jac{dn}{p}(is) = \jac{dc}{q}(s).
\end{align*}
Note that $\jac{cn}{}$ and $\jac{dn}{}$ are real functions of imaginary
arguments, whereas $\jac{sn}{}$ becomes imaginary. 

The restriction $p\in[0,1]$ on the modulus $p$ can be lifted
by means of \emph{Jacobi's real transformations}:
\begin{align*}
 \jac{sn}{\tfrac{1}{p}}(s)=p\jac{sn}{p}\left(\tfrac{s}{p}\right),\quad 
 \jac{cn}{\tfrac{1}{p}}(s)=\jac{dn}{p}\left(\tfrac{s}{p}\right),\quad
 \jac{dn}{\tfrac{1}{p}}(s)=\jac{cn}{p}\left(\tfrac{s}{p}\right),
\end{align*} 
allow evaluation for $p\geq 1$, and by definition, all Jacobi elliptic 
functions are even with respect to their modulus.
Also, with respect to an imaginary modulus $ip \in i \R$,
the Jacobi elliptic functions take real values for a real
argument: the corresponding transformations read
\begin{align*}
 \begin{array}{l}
  \jac{sn}{ip}(s)=q'\jac{sd}{p^\prime}%
   \left(s\sqrt{1+p^2}\right),\vspace{5pt}\\
  \jac{cn}{ip}(s)=\jac{cd}{p^\prime}%
   \left(s\sqrt{1+p^2}\right)\vspace{5pt},\\ 
  \jac{dn}{ip}(s)=\jac{nd}{p^\prime}\left(s\sqrt{1+p^2}\right).
 \end{array}~\textrm{with}~p^\prime = \tfrac{p}{\sqrt{1+p^2}},
\end{align*}

\subsection{Elliptic integrals}
The elliptic integrals are closely related to Jacobi's elliptic 
functions. According to \cite[Chap 17]{abramowitz1972} the 
\emph{(incomplete) elliptic integral of first, second and third 
kind}, denoted by $F$, $E$ and $\Pi$, respectively, is
\begin{align*}
 F(s,p) &= \int_0^s \tfrac{du}{\sqrt{1-p^2\sin^2(u)}}, \\
 E(s,p) &= \int_0^s \sqrt{1-p^2\sin^2(u)}~du, \\
 \Pi(k; s, p)&=\int_0^s \tfrac{1}{1-k\sin^2(u)}\tfrac{du}{%
               \sqrt{1-p^2\sin^2(u)}},
\end{align*}
where $p \in [0,1]$ as before. Evaluated at $s=\pi/2$, we obtain  
$F_p$ ($E_p$, $\Pi^k_p$) the \emph{complete elliptic integrals 
of first (second, third) kind}. $F_p$ is of particular interest: 
The functions $\jac{sn}{p}$ and $\jac{cn}{p}$ are periodic with 
period $4F_p$, whilst $\jac{dn}{p}$ is periodic with period $2F_p$. For this note, it is 
useful to introduce notation for the composition of elliptic integrals 
with the amplitude function $\jac{am}{}$. We will denote
\begin{align*}
	F(\jac{am}{p}(s), p) &=: F(s|p), \\
	E(\jac{am}{p}(s), p) &=: E(s|p), \\
	\Pi(k; \jac{am}{p}(s), p) &=: \Pii{k}{p}{s}. 
\end{align*}

In \Cref{sec:4}, we utilise transformation formulas for $\Pi$, which 
can be written as
\begin{align*}
	\Pii{k}{p}{s} = \int_0^s \tfrac{du}{1-k\jac{sn}{p}^2(u)}.
\end{align*}
In this form, we see that $\Pi$ is a real function for all $p\in \R$ 
and for imaginary arguments, since $\jac{sn}{}$ has real or imaginary 
values in these cases. Using Jacobi's transformations of the last 
subsection, we learn
\begin{align*}
 &\Pii{k}{\tfrac{1}{p}}{as} = p~\Pii{kp^2}{p}{\tfrac{as}{p}}, \\
 &\Pii{k}{p}{i~as} =\left\{
  \begin{array}{l}
      p^2\tfrac{i~as}{q^2}-\tfrac{i}{q^2}E(s|q),~\textrm{for}~k=1, \\
      \tfrac{i~as}{1-k} - \tfrac{ik}{1-k}~\Pii{1-k}{q}{as},~\textrm{for}~k\neq 1,
  \end{array}\right. \\
 &\Pii{k}{ip}{as}=\tfrac{ap'^2}{k'}~s+\tfrac{kq'^3}{k'}%
	\Pii{k'}{p'}{\tfrac{as}{q'}},
\end{align*}
for $a\in \R^\times$ with $p', q'$ as in the last section and $k' = p'^2 + kq'^2$.

\section*{Acknowledgements}
The authors would like to thank Feray Bayar, Fran Burstall, Joseph Cho, Shoichi Fujimori, Wayne Rossman 
and Yuta Ogata for fruitful and helpful discussions. 
Part of this work was done during a six months stay in Japan,
granted to the third author by
the FWF/JSPS Joint Project grant I3809-N32
 "Geometric shape generation".
Further, this work has been partially supported by
the FWF research project P28427-N35
 "Non-rigidity and symmetry breaking". The second author was also supported by GNSAGA of INdAM and the MIUR grant ``Dipartimenti di Eccellenza'' 2018 - 2022, CUP: E11G18000350001,
 DISMA, Politecnico di Torino.

\bibliographystyle{plain}
\bibliography{CLWfinal}

\end{document}